\theoremstyle{plain}
\newtheorem{thm}{Theorem}[section]
\newtheorem{prop}[thm]{Proposition}
\newtheorem{lem}[thm]{Lemma}
\newtheorem{cor}[thm]{Corollary}
\theoremstyle{definition}
\newtheorem{defn}[thm]{Definition}
\theoremstyle{remark}
\newtheorem{rmk}[thm]{Remark}
\newtheorem{conj}[thm]{Conjecture}
\newtheorem{que}[thm]{Question}
\newtheorem{eg}[thm]{Example}
\numberwithin{equation}{section}
\newcommand\nc{\newcommand}
\nc\on{\operatorname}
\nc\renc{\renewcommand}
\newcommand\ssec{\subsection}
\newcommand\sssec{\subsubsection}
\renewcommand{\iff}{\Leftrightarrow}
\newcommand{\id}{\mathrm{id}}
\newcommand\rk{\operatorname{rk}}
\newcommand\Stab{\operatorname{Stab}}
\newcommand\Aut{\operatorname{Aut}}
\newcommand\op{\operatorname{op}}
\journalname{Order}
\begin{document}

\title{Peckness of Edge Posets}

\author{David Hemminger      \and
        Aaron Landesman 		 \and
        Zijian Yao
}

\institute{D. Hemminger \at
              Duke University, 6300 Lakeland Dr., Raleigh, NC 27612
               \email{dhemminger@math.ucla.edu}           
           \and
           A. Landesman \at
             Harvard University, 323 Eliot Mail Center, Cambridge, MA 02138
              \email{aaronlandesman@gmail.com} \and
           Z. Yao \at
           Brown University,
             7285 Brown University, Providence, RI 02912
              \email{zijian.yao.math@gmail.com}			
}


\maketitle

\begin{abstract}
For any graded poset $P$, we define a new graded poset, $\mathcal E(P)$, whose elements are the edges in the Hasse diagram of P. For any group $G$ acting on the boolean algebra $B_n$ in a rank-preserving fashion we conjecture that $\mathcal E(B_n/G)$ is Peck. We prove that the conjecture holds for ``common cover transitive'' actions. We give some infinite families of common cover transitive actions and show that the common cover transitive actions are closed under direct and semidirect products.
\keywords{Boolean Algebra \and Edges \and Group Actions \and Peck Posets \and Quotient Posets \and Unimodality}

\end{abstract}

\newpage

\section{Introduction}\label{sec:introduction}

Let $P$ be a finite graded poset of rank $n$.  In this paper we study the structure of the edges in the Hasse diagram of $P$.  To this end, we define an endofunctor $\mathcal{E}$ on the category of finite graded posets with rank-preserving morphisms as follows.

\begin{defn}
\label{defn:functor_of_edges}
For $\mathcal P$ the category of graded posets, define the {\it functor of edges} $\mathcal{E}\colon\mathcal{P} \rightarrow \mathcal{P}$ as follows. Given $P \in \mathcal P$, the elements of the graded poset $\mathcal{E}(P)$ are pairs $(x, y)$ where $x,y\in P$, $x\le_P y$, and $\rk(y) = \rk(x) + 1$. Define the covering relation $\lessdot_{\mathcal{E}}$ on $\mathcal{E}(P)$ by $(x, y) \lessdot_{\mathcal{E}} (x^\prime, y^\prime)$ if $x\lessdot_P x^\prime$ and $y\lessdot_P y^\prime$. Then define the relation $\le_{\mathcal{E}}$ on $\mathcal{E}(P)$ to be the transitive closure of $\lessdot_{\mathcal{E}}$.

Let $Q$ be a finite graded poset of rank $n$. Given a morphism $f\colon P\rightarrow Q$, define $\mathcal{E}(f)\colon \mathcal{E}(P)\rightarrow \mathcal{E}(Q)$ by $\mathcal{E}(f)(x,y) = (f(x), f(y))$.
\end{defn}

We show that $\mathcal{E}(P)$ is a well-defined graded poset in Section \ref{sec:functor_of_edges}.  Note that an edge in the Hasse diagram of $P$ can be identified with a pair $(x,y)\in P\times P$ such that $x\lessdot y$, and the edges in the Hasse diagram are in bijection with elements $(x,y)\in \mathcal E(P)$ via this identification.  With this in mind, we will frequently refer to $\mathcal{E}(P)$ as the {\it edge poset} of $P$. 

\begin{eg}
We give an example of an edge poset in Figure \ref{fig:EP_definition_example}.  Note that it is important we declare the relation $\leq_\mathcal E$ to be the transitive closure of $\lessdot_{\mathcal E}$.  If instead we defined a relation $\le_{\mathcal{E^\prime}}$ on $\mathcal{E}(P)$ by $(x, y) \leq (a, b)$ if $x \leq a, y \leq b$, then $\mathcal{E}(P)$ would not necessarily be a graded poset.  In Figure \ref{fig:EP_definition_example} it is clear that $\mathcal E(P)$ is a graded poset under relation $\le_{\mathcal{E}}$, with $\rk(x,y) = \rk(x)$, but the Hasse diagram on the right represents a poset which does not have a grading under the relation $\le_{\mathcal{E^\prime}}$.

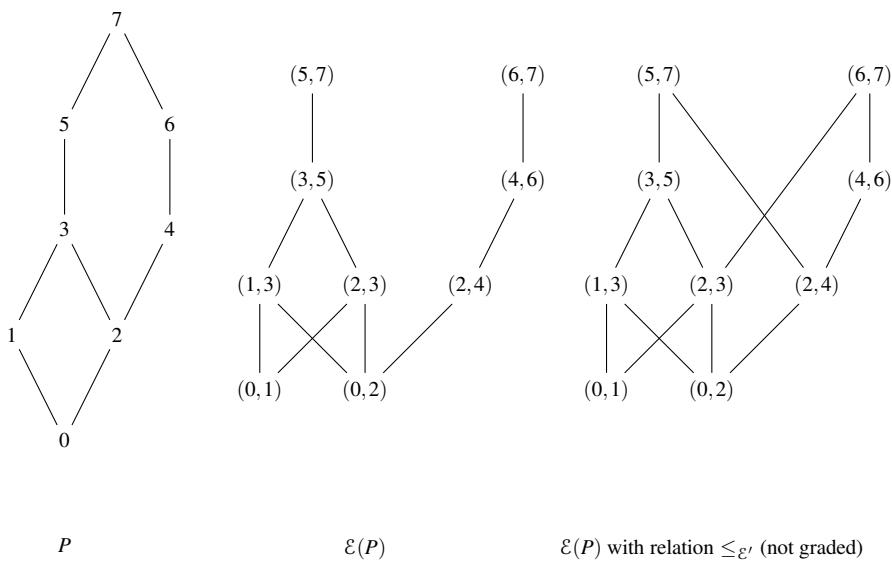
\begin{figure}[h!]
\begin{center}
\[
\raisebox{1mm}{
\begin{tikzpicture}[scale=.7] at (0,0)
  \node (0) at (0,0) {$0$};
  \node (1) at (-1,2) {$1$};
  \node (2) at (1,2) {$2$};
  \node (3) at (0,4) {$3$};
  \node (4) at (2,4) {$4$};
  \node (5) at (0,6) {$5$};
  \node (6) at (2,6) {$6$};
  \node (7) at (1,8) {$7$};
  \draw (0)--(1);
  \draw (0)--(2);
  \draw (1)--(3);
  \draw (2)--(3);
  \draw (2)--(4);
  \draw (3)--(5);
  \draw (4)--(6);
  \draw (5)--(7);
  \draw (6)--(7);
  \node (8) at (0,-2) {$P$};
\end{tikzpicture}
} \qquad
\begin{tikzpicture}[scale = .7]
  \node (0) at (0,1) {$(0,1)$};
  \node (1) at (2,1) {$(0,2)$};
  \node (2) at (0,3) {$(1,3)$};
  \node (3) at (2,3) {$(2,3)$};
  \node (4) at (4,3) {$(2,4)$};
  \node (5) at (1,5) {$(3,5)$};
  \node (6) at (5,5) {$(4,6)$};
  \node (7) at (1,7) {$(5,7)$};
  \node (8) at (5,7) {$(6,7)$};
  \draw (0)--(2);
  \draw (1)--(3);
  \draw (1)--(4);
  \draw (2)--(5);
  \draw (3)--(5);
  \draw (4)--(6);
  \draw (5)--(7);
  \draw (6)--(8);
  \draw (0)--(3);
  \draw (1)--(2);
  \node (9) at (2,-2) {$\mathcal E(P)$};
\end{tikzpicture}
\begin{tikzpicture}[scale = .7]
  \node (0) at (0,1) {$(0,1)$};
  \node (1) at (2,1) {$(0,2)$};
  \node (2) at (0,3) {$(1,3)$};
  \node (3) at (2,3) {$(2,3)$};
  \node (4) at (4,3) {$(2,4)$};
  \node (5) at (1,5) {$(3,5)$};
  \node (6) at (5,5) {$(4,6)$};
  \node (7) at (1,7) {$(5,7)$};
  \node (8) at (5,7) {$(6,7)$};
  \draw (0)--(2);
  \draw (1)--(3);
  \draw (1)--(4);
  \draw (2)--(5);
  \draw (3)--(5);
  \draw (4)--(6);
  \draw (5)--(7);
  \draw (6)--(8);
  \draw (4)--(7);
  \draw (3)--(8);
  \draw (0)--(3);
  \draw (1)--(2);
  \node (9) at (2,-2) {$\mathcal E(P)$ with relation $\le_{\mathcal{E^\prime}}$\text{ (not graded)}};
\end{tikzpicture}
\]

\caption{\label{fig:EP_definition_example}Examples of $\mathcal E$}
\end{center}
\end{figure}
\end{eg}

We observe that when $P$ has a nice structure, $\mathcal E(P)$ commonly has a nice structure as well.  In particular we examine the \textit{boolean algebra of rank $n$}, denoted $B_n$, which is defined to be the poset whose elements are subsets of $\{1,\ldots,n\}$ with the relation given by containment.  That is, for all $x,y\in B_n$, $x\le y$ if $x$ is a subset of $y$.

In this paper, we study the property of Peckness, as defined below
in Definition ~\ref{defn:peck}. The name 
``Peck'' was coined in Stanley's paper ~\cite{quotients_stanley}. However, Peck posets were studied prior to Stanley's article ~\cite{quotients_stanley}, for example in
~\cite{weyl_groups_stanley}. Peckness is a nice property
as it enjoys several equivalent definitions.  For example, see
~\cite[Lemma 1.1]{weyl_groups_stanley} and ~\cite{proctor}.

Throughout the paper we say that a group $G$ acts on $P$ if it acts on the elements of $P$ and the action is rank-preserving and order-preserving.  That is, for all $g \in G$ we have $\rk(gx) = \rk(x)$ and $x \leq y \iff gx \leq gy$. 
By a result of Stanley [Theorem \ref{thm:quotients_of_unitary_peck_posets}]{\tiny } and the fact that $B_n$ is unitary Peck, if $G$ is any action on $B_n$, then $B_n/G$ is Peck.  We conjecture the following.

\begin{conj}\label{conj:F_of_BnG_Peck}
If $G \subseteq \Aut(B_n)$, then $\mathcal E(B_n/G)$ is Peck.
\end{conj}

In the rest of the paper we give theoretical evidence for the conjecture. More precisely, we prove that the conjecture holds whenever the group action of $G$ on $B_n$ has the \emph{common cover transitive} property, which we introduce in the following definition.

\begin{defn}
\label{defn:cover_transitive}
A group action of $G$ on $P$ is \textit{common cover transitive} (CCT) if whenever $x,y,z\in P$ are such that $x\lessdot z$, $y\lessdot z$, and $y\in Gx$, then there exists some $g\in \Stab(z)$ such that $g\cdot x = y$.
\end{defn}

\begin{restatable}{thm}{cctpeck}
\label{thm:cover_transitive_implies_Peck}
If a group action of $G$ on $B_n$ is CCT, then $\mathcal E(B_n/G)$ is Peck.
\end{restatable}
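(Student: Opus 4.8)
The plan is to relate the edge poset of the quotient to the quotient of the edge poset and then invoke Theorem \ref{thm:quotients_of_unitary_peck_posets}. Since every $g \in G \subseteq \Aut(B_n)$ is a rank- and order-preserving automorphism of $B_n$, functoriality of $\mathcal E$ (Definition \ref{defn:functor_of_edges}) makes each $\mathcal E(g)$ an automorphism of $\mathcal E(B_n)$, so $G$ acts on $\mathcal E(B_n)$ by $g\cdot(x,y)=(gx,gy)$. I would then introduce the comparison map
\[
\phi\colon \mathcal E(B_n)/G \longrightarrow \mathcal E(B_n/G), \qquad \phi\big([(x,y)]\big) = ([x],[y]),
\]
and show it is a rank-preserving, cover-preserving bijection. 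Combined with the fact, established in the preceding sections, that $\mathcal E(B_n)$ is unitary Peck, this suffices: Theorem \ref{thm:quotients_of_unitary_peck_posets} then makes $\mathcal E(B_n)/G$ Peck, and transporting its order-raising operator across $\phi$ shows $\mathcal E(B_n/G)$ is Peck.

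Next I would dispatch the routine properties of $\phi$. It is well defined and rank-preserving because $\rk(x,y)=\rk(x)$ and the quotient map $B_n \to B_n/G$ preserves rank; note also that $x\lessdot y$ forces $[x]\lessdot [y]$, so $([x],[y])$ really is an element of $\mathcal E(B_n/G)$. It is surjective because any cover $[x]\lessdot[y]$ in $B_n/G$ lifts, by the definition of the quotient order, to a cover $x_0\lessdot y_0$ in $B_n$ with $[x_0]=[x]$ and $[y_0]=[y]$, so that $\phi([(x_0,y_0)])=([x],[y])$. It preserves covers because, if $[(x,y)]\lessdot[(x',y')]$ in $\mathcal E(B_n)/G$, then some representatives satisfy $(x,y)\lessdot_{\mathcal E}(x',y')$, i.e.\ $x\lessdot x'$ and $y\lessdot y'$ in $B_n$; these descend to covers $[x]\lessdot[x']$ and $[y]\lessdot[y']$ in $B_n/G$ (the ranks differ by one), giving $([x],[y])\lessdot_{\mathcal E}([x'],[y'])$.

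The one substantive point, and the step where the CCT hypothesis is essential, is the injectivity of $\phi$. Suppose $[x]=[x']$ and $[y]=[y']$; I must produce a single $g\in G$ with $g\cdot(x,y)=(x',y')$. First choose $h\in G$ with $hy=y'$ and replace $(x,y)$ by $(hx,y')$ within its orbit, so the two edges now share the top element $y'$. Then $hx\lessdot y'$, $x'\lessdot y'$, and $x'\in G(hx)$ since $[hx]=[x]=[x']$ — exactly the configuration of Definition \ref{defn:cover_transitive}. CCT therefore supplies $g\in\Stab(y')$ with $g(hx)=x'$, and since $gy'=y'$ we get $gh\cdot(x,y)=(x',y')$, so the two orbits coincide. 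Thus $\phi$ is a bijection.

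Finally I would transport the structure. Because $\mathcal E(B_n)$ is unitary Peck, Theorem \ref{thm:quotients_of_unitary_peck_posets} gives that $\mathcal E(B_n)/G$ is Peck; in particular it carries an order-raising operator $U$ on $\mathbb C[\mathcal E(B_n)/G]$ whose iterated powers $U^{\,N-2i}$ (with $N$ the common rank of the two edge posets) are isomorphisms from each bottom rank onto the complementary top rank. Since $\phi$ is a rank-preserving bijection that sends covers to covers, the conjugate $\phi U\phi^{-1}$ is an order-raising operator for $\mathcal E(B_n/G)$ with the same isomorphism property, while $\phi$ simultaneously identifies the rank sizes of the two posets; hence $\mathcal E(B_n/G)$ is rank-symmetric, rank-unimodal, and has the strong Sperner property, i.e.\ it is Peck. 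The main obstacle is precisely the injectivity argument above: this is what pins down the role of CCT, since without it two $G$-inequivalent edges of $B_n$ can collapse to the same edge of $B_n/G$ and $\phi$ fails to be a bijection.
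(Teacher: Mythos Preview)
Your logical skeleton is exactly the route the paper records as Theorem~\ref{edge_unitary_peck_quotient}: show that the comparison map $q\colon\mathcal E(B_n)/G\to\mathcal E(B_n/G)$ is a bijective morphism under CCT, invoke that $\mathcal E(B_n)$ is unitary Peck so that $\mathcal E(B_n)/G$ is Peck, and then push the order-raising operator across $q$. Your treatment of well-definedness, surjectivity, cover preservation, and the CCT-based injectivity is correct and matches Proposition~\ref{prop:surjection_between_F_quotients} and Lemma~\ref{lem:cover_transitive_equivalence}.

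The gap is your appeal to ``the fact, established in the preceding sections, that $\mathcal E(B_n)$ is unitary Peck.'' Nothing in the preceding sections proves this; the paper explicitly says the proof is technical and computational and is relegated to \cite[Section~8]{reu_report}. So as written your argument rests on an input you have not supplied and which this paper deliberately avoids.

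The paper's actual proof circumvents this by introducing the auxiliary poset $\mathcal H(B_n)$ (Definition~\ref{defn:h_map}), which has the same underlying set as $\mathcal E(B_n)$ but a weaker order. The point is that $\mathcal H(B_n)$ decomposes as $n$ disjoint copies of $B_{n-1}$ (Proposition~\ref{prop:computing_HBn}), so it is transparently unitary Peck, and hence $\mathcal H(B_n)/G$ is Peck. One then composes the always-bijective morphism $\mathcal H(B_n)/G\to\mathcal E(B_n)/G$ with your $q$, and Lemma~\ref{lem:bijection_peck_implication} finishes. What this buys is that the ``unitary Peck'' verification becomes a one-line structural observation rather than a computation; what your approach would buy, if you actually proved $\mathcal E(B_n)$ unitary Peck, is avoiding the extra gadget $\mathcal H$ altogether.
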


The theorem trivially implies the following.
\begin{cor}
	$\mathcal E(B_n)$ is Peck.
\end{cor}

\begin{rmk}
	 In fact, it is true that $\mathcal E(B_n)$ is unitary Peck. For a proof, see Section 8 of our project report for the University of Minnesota at Twin Cities REU. \footnote{See http://www.math.umn.edu/reiner/REU/HemmingerLandesmanYao2014.pdf for the full report. This will be referred to throughout the paper as ``the REU report.''} \end{rmk}

We have found many group actions on $B_n$ that have the CCT property.  We first prove that some basic group actions on $B_n$ are CCT.  Throughout the paper we let a subgroup $G\subseteq S_n$ act on $B_n$ by letting it act on the elements within subsets of $[n]:= \{1,\ldots, n\}$, i.e. $g\cdot x = \{g\cdot i\colon i\in x\}$  for all $g\in G$, $x\in B_n$.  We also embed the dihedral group $D_{2n}$ into $S_n$ by letting it act as rotations and reflections on the vertices of an $n$-gon.

\begin{restatable}{prop}{building}
\label{prop:cover_transitive_building_blocks}
 Let $n$ be a positive integer and $p$ be a prime.  The following actions are CCT.
\begin{enumerate}
\item The action of $S_n$ on $B_n$.
\item The action of $D_{2p}$ on $B_p$.
\item The action of $D_{4p}$ on $B_{2p}$.
\end{enumerate} 
\end{restatable}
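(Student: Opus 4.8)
The plan is to reduce the CCT condition to a transparent statement about stabilizer orbits, dispatch the symmetric group case at once, and then handle the two dihedral cases using the primality of $p$ to rigidify the situation. First I would record the reduction. In each case $G$ acts on $B_m$ (with $m=n,p,2p$), and a cover $x\lessdot z$ means $z = x\cup\{a\}$ for the unique element $a = z\setminus x$. For any $g\in\Stab(z)$ we have $g\cdot x = g(z\setminus\{a\}) = z\setminus\{ga\}$, so $g\cdot x = y = z\setminus\{b\}$ if and only if $ga=b$. Hence, writing $x = z\setminus\{a\}$ and $y = z\setminus\{b\}$, the CCT property is equivalent to the assertion: \emph{whenever $a,b\in z$ and $z\setminus\{a\}$, $z\setminus\{b\}$ lie in the same $G$-orbit, the elements $a,b$ lie in the same orbit of $\Stab(z)$ acting on the elements of $z$.} For part (1) this is immediate: $\Stab_{S_n}(z) = \on{Sym}(z)\times\on{Sym}([n]\setminus z)$ already acts transitively on the elements of $z$, so any $a,b\in z$ are related (the orbit hypothesis is not even needed).

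For parts (2) and (3) I would identify the vertices with $\mathbb Z/m$ and let $r\colon i\mapsto i+1$ generate the rotations. The key input from primality is a rigidity lemma: a nontrivial rotation $r^k$ fixing a subset $z$ forces $z$ to be a union of $\langle r^k\rangle$-orbits, so when $m=p$ is prime and $k\neq 0$ the rotation $r^k$ generates the whole transitive cyclic group and $z$ must be $\emptyset$ or all of $\mathbb Z/p$. Thus for a proper nonempty $z$ the stabilizer $\Stab_{D_{2p}}(z)$ contains no nontrivial rotation, and since the product of two distinct reflections is a nontrivial rotation, $\Stab(z)$ is either trivial or $\{1,s\}$ for a single reflection $s$; equivalently, the cyclic sequence of gaps between consecutive elements of $z$ is aperiodic, and its only possible symmetry is a single reversal. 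Given $h\in D_{2p}$ with $h(z\setminus\{a\}) = z\setminus\{b\}$ and $a\neq b$, I would compute $hz = (z\setminus\{b\})\cup\{ha\}$: if $ha=b$ then $hz=z$, so $h\in\Stab(z)$ already carries $a$ to $b$; otherwise $ha\notin z$ and $z,hz$ are two members of the orbit $Gz$ agreeing in all but one vertex. Writing $w = z\setminus\{a\}$, the relation $hw = (w\setminus\{b\})\cup\{a\}$ says $w$ and $hw$ overlap in all but one vertex; splitting on whether $h$ is a rotation or a reflection and using aperiodicity, one shows $w$ (hence $z$) must be an arc with respect to a suitable cyclic reordering, which is exactly the configuration carrying a reflection symmetry interchanging $a$ and $b$. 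That reflection is the desired element of $\Stab(z)$.

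The hard part will be this overlap analysis: one must rule out the ``composite'' coincidences in which $z\setminus\{a\}\sim z\setminus\{b\}$ holds for an accidental reason unrelated to any symmetry of $z$, and primality is precisely what eliminates the periodic gap sequences that would otherwise permit such coincidences. Concretely, the lemma I expect to need is that for a proper nonempty $w\subseteq\mathbb Z/p$ the equation $|w\cap(w+k)| = |w|-1$ with $k\neq 0$ (or its reflection analogue) forces $w$ to be an arc; verifying this and then assembling the reflection of $z$ from it is where the real work lies.

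Finally, part (3) requires one extra wrinkle, because $2p$ is not prime. The rotation subgroup $C_{2p}$ now contains the order-$p$ subgroup $\langle r^2\rangle$, whose only invariant proper nonempty subsets are the two parity classes of size $p$, and, more importantly, the antipodal involution $r^p$, which genuinely fixes many subsets; so $\Stab_{D_{4p}}(z)$ may contain $r^p$ together with reflections. I would therefore treat the antipodally symmetric subsets as a separate sub-case---using primality of $p$ to control the residual order-$p$ rotational coincidences---and otherwise repeat the gap-sequence argument from part (2).
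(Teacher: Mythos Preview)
Your proposal is correct and follows essentially the same route as the paper: dispatch the reflection case via the order-two trick (if $x$ and $rx$ are both covered by $y$ then $x\cup rx=y$, whence $ry=y$), and for a rotation $\sigma_0^d$ use primality to force $x$ into an arithmetic progression with step $d$ (your ``arc with respect to a suitable cyclic reordering''), after which an explicit reflection of the top element swaps the two covered elements. Your initial stabilizer-orbit reformulation and explicit isolation of the overlap lemma $|w\cap hw|=|w|-1$ are a cleaner framing than the paper's terse ``it is fairly straightforward to see,'' but the underlying argument is identical; for part~(3) the paper likewise just says ``similar,'' so your identification of the antipodal involution $r^p$ as the extra wrinkle is already more detailed than what the paper provides.
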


We further show that common cover transitivity is preserved under semidirect products, allowing us to describe several large families of CCT actions in Subsection \ref{ssec:CCT_examples}.

\begin{restatable}{prop}{semidirect}
\label{prop:semidirect_product_preservation}
Let $G\subseteq \Aut(P)$, $H\triangleleft G$, and $K\subset G$ such that $G = H\rtimes K$.  Suppose that the action of $H$ on $P$ is CCT and the action of $K$ on $P/H$ is CCT. Then the action of $G$ on $P$ is CCT.
\end{restatable}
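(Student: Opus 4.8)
The plan is to reduce everything to two applications of the CCT hypothesis — one for $K$ acting on $P/H$ and one for $H$ acting on $P$ — linked through the quotient map $\pi\colon P\to P/H$. I would first record the structural facts I need: since $H\triangleleft G$, each $k\in K$ permutes the $H$-orbits via $k\cdot(Hx)=H(kx)$ (well-defined because $kHk^{-1}=H$), and this is exactly the action of $K$ on $P/H$ appearing in the hypothesis. The map $\pi$ is rank-preserving, satisfies $\pi(kx)=k\cdot\pi(x)$ for $k\in K$, and sends covers to covers: if $x\lessdot z$ then $\pi(x)\le_{\mathcal E}\pi(z)$ with $\rk(\pi(z))-\rk(\pi(x))=1$, hence $\pi(x)\lessdot\pi(z)$ in $P/H$.

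Next I would set up the given data. Suppose $x,y,z\in P$ with $x\lessdot z$, $y\lessdot z$, and $y\in Gx$; write $y=gx$ and factor $g=hk$ uniquely with $h\in H$, $k\in K$. By the cover-preservation above I obtain $\pi(x)\lessdot\pi(z)$ and $\pi(y)\lessdot\pi(z)$ in $P/H$, and moreover $\pi(y)=\pi(hkx)=\pi(kx)=k\cdot\pi(x)$, so $\pi(y)\in K\cdot\pi(x)$. Applying CCT for the action of $K$ on $P/H$ to the triple $\pi(x),\pi(y),\pi(z)$ then yields some $k'\in\Stab_K(\pi(z))$ with $k'\cdot\pi(x)=\pi(y)$.

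The crux is then to lift this back to $P$. The condition $k'\in\Stab_K(\pi(z))$ only says that $k'z$ and $z$ share an $H$-orbit, so $k'z=h_0 z$ for some $h_0\in H$. I would introduce the corrected element $x_1:=h_0^{-1}k'x$. Since $h_0^{-1}k'$ is an automorphism with $h_0^{-1}k'z=z$ and $x\lessdot z$, we get $x_1\lessdot z$; and because $h_0^{-1}\in H$ we have $\pi(x_1)=\pi(k'x)=\pi(y)$, so $y\in Hx_1$. Now CCT for the action of $H$ on $P$ applies to the triple $x_1,y,z$, producing $h_1\in\Stab_H(z)$ with $h_1 x_1=y$. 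Setting $g':=h_1 h_0^{-1}k'$ gives $g'x=y$ and $g'z=h_1 h_0^{-1}(h_0 z)=h_1 z=z$, so $g'\in\Stab_G(z)$, which is exactly what CCT for $G$ on $P$ demands.

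I expect the main obstacle to be precisely the lifting step in the third paragraph. The $K$-CCT output $k'$ fixes only the $H$-orbit $\pi(z)$, not the element $z$ itself, so passing naively to $P$ would not produce an element of $\Stab_G(z)$; the correction by $h_0^{-1}\in H$ is what repairs this, turning ``stabilizes the orbit'' into ``stabilizes $z$'' while still carrying $x$ into the $H$-orbit of $y$, so that the final $H$-CCT application is legitimate. Everything else is routine bookkeeping with the uniqueness of the factorization $g=hk$ and the compatibility $\pi(kx)=k\cdot\pi(x)$.
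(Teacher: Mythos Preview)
Your argument is correct and follows essentially the same route as the paper's proof: push the triple to $P/H$, apply $K$-CCT to find a suitable $k'\in K$ stabilizing $Hz$, correct by an element of $H$ so that $z$ itself is fixed, and then apply $H$-CCT to finish. Your version is in fact slightly cleaner than the paper's, which carries along an unnecessary factor $h_0$ from the original decomposition $g=h_0k_0$ in the element $h_1k_1h_0$; your corrected element $h_0^{-1}k'$ plays the same role without that extra baggage.
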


The paper is organized as follows. In Section \ref{sec:background} we cover necessary background definitions for posets and Peck posets.  In Section \ref{sec:functor_of_edges} we show that $\mathcal E$ is well-defined and prove Theorem \ref{thm:cover_transitive_implies_Peck} regarding CCT actions along with various other nice properties of $\mathcal E$. Section \ref{sec:cover_transitive} contains the proofs of Propositions \ref{prop:cover_transitive_building_blocks} and \ref{prop:semidirect_product_preservation} as well as some examples of families of group actions shown to be CCT by these propositions. In Section \ref{sec:wreath_product}, we obtain a different proof of \cite[Theorem 1.1]{pak}, in the case that $r = 1$.

\section{Background}\label{sec:background}

In this section we review necessary background definitions for this paper.
As references for this material, see
\cite[Chapter 4]{stanley_alg_comb} and \cite{quotients_stanley}.

A {\it graded poset} $P$ is a poset with a rank function $\rk\colon P \rightarrow \mathbb Z_{\geq 0}$ satisfying the following conditions.
\begin{enumerate}
  \item If $x\in P$ and $x\lessdot y$, then $rk(x) + 1 = \rk(y)$.
  \item If $x < y$, then $\rk(x) < \rk(y)$. 
\end{enumerate}

\begin{rmk}
Note that the second condition follows from the first in the case that the poset $P$ is finite.
\end{rmk}

We denote the $i$th rank of $P$ by  $P_i = \{x \in P\colon\rk(x) = i\}$.  If for all $x\in P$ we have $0 \leq \rk(x) \leq n$, and there exist $y,z$ with $\rk(y) = 0$ and $\rk(z) = n$, we say that $P$ is a graded poset of {\it rank $n$}.

\begin{rmk}
Throughout the paper we write $x\le_P y$ to denote that $x$ is less than or equal to $y$ under the relation defined on the poset $P$.  When the poset is clear we omit the $P$ and simply write $x\le y$.  
\end{rmk}
Let $P, Q \in \mathcal P$ be two finite graded posets.  A map $f\colon P\rightarrow Q$ is a \textit{morphism} from $P$ to $Q$ if it is rank-preserving and order-preserving.  In other words, $f$ is a morphism if for all $x,y\in P$, $\rk(x) = \rk(f(x))$ and $x\le_P y $ implies $f(x)\le_Q f(y)$.  We say that $f$ is \textit{injective/surjective/bijective} if it is an injection/surjection/bijection from $P$ to $Q$ as sets.

\begin{rmk}\label{rem:bijective_morphism_not_isomorphism}
Note that we do not require the implication that $f(x)\le_Q f(y)$ implies $x\le_P y$ in order for $f$ to be a morphism.  In particular this means that a bijective morphism $f$ need not be an isomorphism, since it will not necessarily have a two-sided inverse.  
\end{rmk}

In what follows let $P$ be a poset of rank $n$, and write $p_i = |P_i|$.  If we have
$$p_0\le p_1\le \ldots \le p_k \ge p_{k+1} \ge\ldots \ge p_n$$
for some $0\le k\le n$, then $P$ is \textit{rank-unimodal}. If $p_i = p_{n-i}$ for all $1\le i\le n$, then $P$ is \textit{rank-symmetric}.  An \textit{antichain} in $P$ is a set of elements in $P$ that are pairwise incomparable.  If no antichain in $P$ is larger than the largest rank of $P$, then $P$ is \textit{Sperner}.  More generally, $P$ is \textit{$k$-Sperner} if no union of $k$ disjoint antichains in $P$ is larger than the union of the largest $k$ ranks of $P$. We say that $P$ is \textit{strongly Sperner} if it is $k$-Sperner for all $1\le k\le n$.

\begin{defn}
	\label{defn:peck}
A graded poset $P$ is \textit{Peck} if $P$ is rank-symmetric, rank-unimodal, and strongly Sperner.
\end{defn}

Let $V(P)$ and $V(P_i)$ be the complex vector spaces with bases $\{x :x\in P\}$ and $\{x :x\in P_i\}$ respectively. In determining whether $P$ is Peck, it is often useful to consider certain linear transformations on $V(P)$.

\begin{defn}
\label{defn:lefschetz}
A linear map $U\colon V(P)\rightarrow V(P)$ is an \textit{order-raising operator} if $U(V(P_n)) = 0$ and for all $0\le i\le n-1$, $x\in P_i$ we have

$$U(x) = \sum_{y\gtrdot x} c_{x,y}y$$

\noindent for some constants $c_{x,y}\in \mathbb{C}$.  We say that $U$ is the \textit{Lefschetz map} if all $c_{x,y}$ on the right hand side are equal to 1.
\end{defn}

\noindent We then have the following well-known characterization of Peck posets.

\begin{lem}[\cite{weyl_groups_stanley}, Lemma 1.1]\label{lem:Peck_poset_characterization}
A graded poset $P$ is Peck if and only if there exists an order-raising operator $U$ such that for all $0\le i < \frac{n}{2}$, the map $U^{n-2i}\colon V(P_i)\rightarrow V(P_{n-i})$ is an isomorphism.
\end{lem}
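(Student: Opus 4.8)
The plan is to translate the word \emph{Peck} into properties of the operator $U$ and prove the two implications separately, reading the hypothesis that $U^{n-2i}\colon V(P_i)\to V(P_{n-i})$ is an isomorphism as a Lefschetz condition. The basic observation I will use repeatedly is a factorization: for $i\le j\le n-i$ one has $U^{n-2i}=U^{\,n-i-j}\,U^{\,j-i}$, so injectivity of the left-hand isomorphism forces $U^{\,j-i}\colon V(P_i)\to V(P_j)$ to be injective; dually, writing the isomorphism $U^{\,2j-n}\colon V(P_{n-j})\to V(P_j)$ as $U^{\,j-i}\,U^{\,j-n+i}$ shows $U^{\,j-i}$ is surjective whenever $n-j\le i\le j$.

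For the direction asserting that such a $U$ makes $P$ Peck, rank-symmetry is immediate, since the isomorphism $U^{n-2i}$ gives $p_i=\dim V(P_i)=\dim V(P_{n-i})=p_{n-i}$. Rank-unimodality follows from the factorization above: $U\colon V(P_i)\to V(P_{i+1})$ is injective for $i<n/2$, giving $p_i\le p_{i+1}$ there, and surjective for $i\ge n/2$, giving $p_i\ge p_{i+1}$ there. Conceptually, the hypothesis is exactly the statement that $U$, together with the weight operator $H$ acting as the scalar $2i-n$ on $V(P_i)$, extends to an $\mathfrak{sl}_2$-action on $V(P)$; the weight-space dimensions of a finite-dimensional $\mathfrak{sl}_2$-module are automatically symmetric and unimodal, which is the representation-theoretic shadow of the two facts just proved.

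The remaining and most substantive point in this direction is the strong Sperner property, and here it is essential that $U$ is \emph{supported on covers}. The mechanism is that an injective, cover-supported operator $V(P_i)\to V(P_j)$ has a full-rank maximal minor, and a nonzero term in the Leibniz expansion of that minor is a system of distinct representatives along saturated chains; feeding the injectivity of the powers $U^{\,j-i}$ (valid throughout the range $i\le j\le n-i$) into the linear-algebra method, together with Mirsky's theorem---that a set is a union of $k$ antichains exactly when it contains no chain of $k+1$ elements---yields the bound $|A|\le(\text{sum of the }k\text{ largest }p_j)$ for every union $A$ of $k$ antichains. The reverse inequality is witnessed by the top $k$ ranks, so $P$ is $k$-Sperner for all $k$, hence strongly Sperner and Peck.

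For the converse I would build $U$ from \emph{generic} coefficients $c_{x,y}$ on the cover edges and analyze the generic rank of $U^{n-2i}\colon V(P_i)\to V(P_{n-i})$. Its $(z,x)$ entry is a weighted sum over saturated chains from $x$ to $z$, so by the Lindstr\"om--Gessel--Viennot identity together with genericity this rank equals the maximum number of vertex-disjoint saturated chains from $P_i$ to $P_{n-i}$; since a single common generic choice suffices for all $i$ by Zariski density, and since $p_i=p_{n-i}$, it remains only to show that this maximum equals $p_i$. That is a Menger/Greene--Kleitman translation of the strong Sperner inequalities into a minimum-cut bound, and it is exactly where the Peck hypothesis is consumed. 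I expect this translation---together with the care needed to preclude accidental cancellation in the generic determinant---to be the main obstacle; in the forward direction the analogous crux is making the $k$-Sperner bound rigorous simultaneously for all $k$ through the cover-supported linear-algebra method.
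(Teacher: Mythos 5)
The paper never proves this lemma; it is quoted from \cite{weyl_groups_stanley}, so your attempt has to be measured against the standard proofs (Stanley's original argument, Proctor's $\mathfrak{sl}_2$ method, and the chain-theoretic result of Griggs that underlies the converse). Your scaffolding matches that standard route, and several pieces are genuinely complete: the factorization $U^{n-2i}=U^{n-i-j}U^{j-i}$ correctly yields rank-symmetry and rank-unimodality, the nonzero-minor/Leibniz trick correctly converts invertibility of a cover-supported map into matchings $\mu\colon P_i\to P_j$ with $x\le \mu(x)$, and in the converse the frame of generic edge weights, a Lindstr\"om-type identity (vertex-disjoint saturated chain systems produce distinct monomials, so no cancellation), and Zariski density to handle all $i$ with one $U$ is the right one. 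But the two steps you yourself flag as ``the main crux'' and ``the main obstacle'' are not fillable by the mechanisms you name, and they are the actual content of the lemma. Forward direction: ``feeding injectivity into the linear-algebra method together with Mirsky's theorem yields the bound'' is not a proof, and the naive instantiation fails. Gluing the one-step matchings (injective below the middle, surjective above) partitions $P$ into $p_{\lfloor n/2\rfloor}$ saturated chains crossing the middle rank, which gives $1$-Sperner by the Dilworth-type count; but for $k\ge 2$ the quantity $\sum_{C}\min(k,|C|)$ is uncontrolled because these chains need not span symmetric intervals $[i,n-i]$, and the long matchings $\mu_{i,j}$ from different ranks into a common middle window can collide, so Mirsky gives you no bound. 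The known repairs are different arguments altogether: Proctor decomposes $V(P)$ into $\mathfrak{sl}_2$-strings and proves a separate lemma bounding a union of $k$ antichains against the string-length profile, while Stanley uses a determinantal/triangularity argument; your sketch contains neither.

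Converse direction: your reduction is correct in outline --- it suffices that for each $i<n/2$ there exist $p_i$ pairwise vertex-disjoint \emph{saturated} chains from $P_i$ to $P_{n-i}$, and by Menger on the layered Hasse diagram this is equivalent to every vertex cut separating $P_i$ from $P_{n-i}$ having at least $p_i$ elements. But extracting that cut bound from the $k$-Sperner inequalities is itself a theorem (essentially Griggs, \emph{On chains and Sperner $k$-families in ranked posets}), not a ``translation'': a minimum cut is a union of up to $n-2i+1$ rank-antichains rather than of the $k$ antichains you need to compare against the $k$ largest ranks, and Greene--Kleitman chain partitions are not saturated, so invoking Greene--Kleitman cannot produce the chains the Lindstr\"om matrix requires. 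The difficulty is visible even in the partial case that does work: Hall's condition between consecutive ranks follows from strong Sperner by the trick that, if $T\subseteq P_i$ had $|N(T)|<|T|$, then $\bigl(T\cup (P_{i+1}\setminus N(T))\bigr)\cup P_{i+2}\cup\cdots\cup P_{n-i-1}$ is a union of $n-2i-1$ antichains exceeding the sum of the $n-2i-1$ largest ranks; but the same trick breaks down for matchings between the \emph{images} of $P_i$ deeper inside the window, which is exactly where the disjoint-chain construction has to work. A minor additional overstatement: the existence of such a $U$ is not literally ``exactly'' an $\mathfrak{sl}_2$-action --- constructing the compatible lowering operator is itself a (true but nontrivial) piece of linear algebra --- though since you offer it only as intuition it is not load-bearing. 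In sum, the proposal is a correct skeleton with both load-bearing lemmas missing, one in each direction.
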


\begin{defn}
If the Lefschetz map satisfies the condition for $U$ in Lemma \ref{lem:Peck_poset_characterization}, then $P$ is \textit{unitary Peck}.
\end{defn}

Note that a group $G$ acts on $P$ if the action defines an embedding $G\hookrightarrow \Aut(P)$. We define the \textit{quotient poset} $P/G$ to be the poset whose elements are the orbits of $G$, with the relation $\mathcal{O}\le \mathcal{O}^\prime$ if there exist $x\in \mathcal{O}$, $x^\prime\in \mathcal{O}^\prime$ such that $x\le_{P} x^\prime$.  We will use the following result in the paper.

\begin{thm}[\cite{quotients_stanley}, Theorem 1]
\label{thm:quotients_of_unitary_peck_posets}
If $P$ is unitary Peck and $G\subseteq\Aut(P)$, then $P/G$ is Peck.
\end{thm}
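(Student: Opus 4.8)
The plan is to apply the characterization of Peck posets in Lemma \ref{lem:Peck_poset_characterization}, transporting the Lefschetz operator that witnesses the unitary Peckness of $P$ down to the quotient $P/G$. First I would record the algebraic incarnation of the quotient: the map sending an orbit $\mathcal{O}$ to the orbit-sum $\sum_{x\in\mathcal{O}} x$ identifies $V(P/G)$ with the space of $G$-invariants $V(P)^G\subseteq V(P)$, since the orbit-sums form a basis of $V(P)^G$. Because $G$ acts rank-preservingly, this identification respects rank, so $V((P/G)_i)\cong V(P_i)^G$ for every $i$, and $P/G$ is graded (a routine verification, with $\rk(\mathcal{O})=\rk(x)$ for any $x\in\mathcal{O}$).

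Next I would observe that the Lefschetz operator $U$ on $V(P)$ is $G$-equivariant. Indeed, for $g\in G$ and $x\in P$, since $g$ is a poset automorphism it restricts to a bijection between the elements covering $x$ and those covering $gx$, so $gU(x) = \sum_{y\gtrdot x} gy = \sum_{y'\gtrdot gx} y' = U(gx)$. Hence $U$ preserves $V(P)^G$, and its restriction $\bar U := U|_{V(P)^G}$ is well-defined. I would then check that, under the orbit-sum identification, $\bar U$ is an order-raising operator on $P/G$: applying $U$ to an orbit-sum produces a $G$-invariant vector supported one rank higher, hence a linear combination of orbit-sums, and whenever $y\gtrdot x$ we have $\mathcal{O}(x)\lessdot\mathcal{O}(y)$ in $P/G$ (the relation holds by the definition of the quotient order, and it is a cover because ranks differ by one in the graded poset $P/G$). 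Thus the combination is supported exactly on orbits covering $\mathcal{O}$, as an order-raising operator requires.

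The crux is then to show $\bar U^{\,n-2i}\colon V((P/G)_i)\to V((P/G)_{n-i})$ is an isomorphism for $i<n/2$. Because $P$ is unitary Peck, $U^{n-2i}\colon V(P_i)\to V(P_{n-i})$ is an isomorphism, and it is $G$-equivariant as a power of the equivariant map $U$. A $G$-equivariant isomorphism carries invariants to invariants, and, applying the same reasoning to its ($G$-equivariant) inverse, it restricts to an isomorphism $V(P_i)^G\xrightarrow{\sim} V(P_{n-i})^G$. Transporting through the orbit-sum identification, this says precisely that $\bar U^{\,n-2i}$ is an isomorphism. With $\bar U$ an order-raising operator meeting the hypotheses of Lemma \ref{lem:Peck_poset_characterization}, I conclude that $P/G$ is Peck.

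The main obstacle is the bookkeeping of the first two steps—verifying cleanly that $V(P/G)\cong V(P)^G$ compatibly with rank and that $\bar U$ is genuinely order-raising—since the essential linear-algebra fact, that an equivariant isomorphism restricts to an isomorphism on invariant subspaces, is immediate once the setup is in place. One point to handle with care is that $\bar U$ need not coincide with the Lefschetz map of $P/G$, but Lemma \ref{lem:Peck_poset_characterization} only demands some order-raising operator, so this causes no difficulty; correspondingly the conclusion is that $P/G$ is Peck, not necessarily unitary Peck.
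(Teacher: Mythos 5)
Your proof is correct, and since the paper itself gives no argument for this statement (it is quoted verbatim as Theorem 1 of the cited Stanley paper), the right comparison is with Stanley's original proof, which your proposal reproduces essentially exactly: identify $V(P/G)$ with the invariants $V(P)^G$ via orbit sums, note the Lefschetz map is $G$-equivariant and hence restricts to an order-raising operator on the quotient, and restrict the isomorphisms $U^{n-2i}$ to invariant subspaces. All the delicate points are handled properly, including that covering in $P/G$ follows from the rank-one gap in the graded quotient and that the conclusion is Peck rather than unitary Peck.
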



\section{The Edge Poset}
\label{sec:functor_of_edges}

In Subsection \ref{ssec:functoriality} we show that $\mathcal E$ as described in Definition \ref{defn:functor_of_edges} is well-defined and prove some useful properties of $\mathcal E$.  In Subsection \ref{ssec:dual_posets} we prove that $\mathcal{E}$ sends self-dual posets to self-dual posets.  In Subsection \ref{ssec:equivalent_defs} we give several equivalent definitions for CCT actions, and Subsection \ref{ssec:proof_of_cover_transitive_implies_Peck} is devoted to the proof of Theorem \ref{thm:cover_transitive_implies_Peck}.

\ssec{Functoriality of $\mathcal E$ and Group Actions}
\label{ssec:functoriality}
First we show that $\mathcal{E}$ is well-defined in Lemmas \ref{lem:f_partial_order}, \ref{lem:FP_graded_poset}, and \ref{lem:Ff_poset_morphism}.  We then define a natural $G$ action on $\mathcal E(P)$ and obtain a surjection $\mathcal E(P)/G\rightarrow \mathcal E(P/G)$, which are key ingredients of the proof of Theorem \ref{thm:cover_transitive_implies_Peck}.

\begin{rmk}
When the poset $P$ is clear, we will use $\leq_{\mathcal E}$ and $\lessdot_{\mathcal E}$ to refer to $\leq_{\mathcal E(P)}$ and $\lessdot_{\mathcal E(P)}$. Similarly, in Subsection ~\ref{ssec:proof_of_cover_transitive_implies_Peck}, we define posets $\mathcal H(B_n)$, and will use $\leq_{\mathcal H}$ and $\lessdot_{\mathcal H}$ in place of $\leq_{\mathcal H(B_n)}$ and $\lessdot_{\mathcal H(B_n)}$.
\end{rmk}

\begin{lem}\label{lem:f_partial_order}
The relation $\le_{\mathcal E}$ defines a partial order on $\mathcal E(P)$.
\end{lem}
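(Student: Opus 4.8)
The plan is to verify the three defining properties of a partial order for $\le_{\mathcal E}$: reflexivity, transitivity, and antisymmetry. Since $\le_{\mathcal E}$ is defined to be the transitive closure of the cover relation $\lessdot_{\mathcal E}$ (taken to include chains of length $0$, so that the reflexive relation is built in), reflexivity and transitivity are immediate from the construction: reflexivity is the empty chain, and the transitive closure of any relation is transitive by definition. Hence the entire content of the lemma lies in establishing antisymmetry, and that is where I would concentrate the argument.

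The key tool I would introduce is the auxiliary function $r(x,y) := \rk(x)$ on $\mathcal E(P)$, where $\rk$ is the rank function of $P$. The point is that $r$ strictly increases along every cover: if $(x,y) \lessdot_{\mathcal E} (x',y')$, then by Definition \ref{defn:functor_of_edges} we have $x \lessdot_P x'$, so $\rk(x') = \rk(x) + 1$ and therefore $r(x',y') = r(x,y) + 1$. Consequently, whenever $(x,y) \le_{\mathcal E} (x',y')$ holds via a chain $(x,y) = (a_0,b_0) \lessdot_{\mathcal E} \cdots \lessdot_{\mathcal E} (a_k,b_k) = (x',y')$ of length $k$, telescoping gives $r(x',y') = r(x,y) + k$. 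In particular $r(x',y') \ge r(x,y)$, with equality precisely when $k = 0$, i.e. when $(x,y) = (x',y')$.

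For antisymmetry I would then take $(x,y) \le_{\mathcal E} (x',y')$ and $(x',y') \le_{\mathcal E} (x,y)$ and apply the preceding observation to each relation, obtaining $r(x,y) \le r(x',y')$ and $r(x',y') \le r(x,y)$. This forces $r(x,y) = r(x',y')$, so the connecting chain witnessing the first relation must have length $0$, whence $(x,y) = (x',y')$. This closes the antisymmetry argument and completes the verification that $\le_{\mathcal E}$ is a partial order.

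I do not expect a serious obstacle here; the monotone function $r$ does all the real work by ruling out any nontrivial cycle. The only mild point to state carefully is the convention that the transitive closure includes the reflexive relation, so that reflexivity need not be adjoined by hand. It is worth noting that $r(x,y) = \rk(x)$ is exactly the candidate rank function that the subsequent Lemma \ref{lem:FP_graded_poset} will use to show that $\mathcal E(P)$ is in fact a graded poset, so this same observation is reused immediately afterward.
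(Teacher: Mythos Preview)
Your argument is correct. The only difference from the paper's proof is in how antisymmetry is checked: the paper observes that $(x,y)\le_{\mathcal E}(x',y')$ forces $x\le_P x'$ and $y\le_P y'$ (since each cover in $\mathcal E(P)$ projects to a cover in $P$ on each coordinate), and then invokes antisymmetry of $\le_P$ directly to conclude $x=x'$, $y=y'$. You instead compose the first-coordinate projection with $\rk$ and argue via the strict monotonicity of $r(x,y)=\rk(x)$ along chains. Both routes are essentially the same idea—push the relation down to something already known to be antisymmetric—but the paper's version uses only that $P$ is a poset, whereas yours uses the grading on $P$. In exchange, your computation $r(x',y')=r(x,y)+k$ does double duty by previewing exactly the rank function needed for Lemma~\ref{lem:FP_graded_poset}, as you note.
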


\begin{proof}
We have that $(x, y)\le_{\mathcal E} (x, y)$ and that $\le_{\mathcal E}$ is transitive by definition.  Suppose $(x, y)\le_{\mathcal E} (x^\prime, y^\prime)$ and $(x^\prime, y^\prime)\le_{\mathcal E} (x, y)$.  Then $x\le_P x^\prime \le_P x$ and $y\le_P y^\prime \le_P y$, so $x = x^\prime$ and $y=y^\prime$ by antisymmetry of $\le_P$, hence $(x, y) = (x^\prime, y^\prime)$.
\end{proof}

\begin{lem}\label{lem:FP_graded_poset}
For $P$ a graded poset, the poset $\mathcal E(P)$ is graded.
\end{lem}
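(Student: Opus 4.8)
The plan is to exhibit an explicit rank function and then verify the two axioms of a graded poset, the second of which forces the real work. Since Lemma~\ref{lem:f_partial_order} already guarantees that $\le_{\mathcal E}$ is a partial order, what remains is to produce a rank function compatible with the covering relation of $(\mathcal E(P),\le_{\mathcal E})$. The natural candidate is $\rk_{\mathcal E}(x,y) := \rk_P(x)$; this is well defined because every element $(x,y)$ of $\mathcal E(P)$ satisfies $x\lessdot_P y$, so $\rk_P(y)=\rk_P(x)+1$ and the first coordinate alone determines the rank. Note also that $0\le \rk_P(x)\le n-1$ whenever $P$ has rank $n$, so $\rk_{\mathcal E}$ takes values in $\{0,\dots,n-1\}$.

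First I would establish strict monotonicity, i.e.\ that $(x,y)<_{\mathcal E}(x',y')$ implies $\rk_{\mathcal E}(x,y)<\rk_{\mathcal E}(x',y')$. Because $\le_{\mathcal E}$ is by definition the transitive closure of $\lessdot_{\mathcal E}$, any such strict relation unfolds into a chain $(x,y)=(x_0,y_0)\lessdot_{\mathcal E}\cdots\lessdot_{\mathcal E}(x_k,y_k)=(x',y')$ with $k\ge 1$. Each step $\lessdot_{\mathcal E}$ requires $x_i\lessdot_P x_{i+1}$, hence raises $\rk_P$ of the first coordinate by exactly one; summing over the chain gives $\rk_{\mathcal E}(x',y')=\rk_{\mathcal E}(x,y)+k>\rk_{\mathcal E}(x,y)$. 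This is the second graded-poset axiom, and as a byproduct it shows that two related elements of equal rank must coincide.

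The key step, and the one I expect to be the main obstacle, is the first axiom: showing that the genuine covering relation of the partial order $(\mathcal E(P),\le_{\mathcal E})$ coincides with the relation $\lessdot_{\mathcal E}$ declared in Definition~\ref{defn:functor_of_edges}, so that true covers raise rank by exactly one. The subtlety is to avoid circularity, since one is tempted to use the rank function to reason about covers while the grading axiom is itself phrased in terms of covers; proving strict monotonicity first, directly from the transitive-closure description, is what breaks the loop. With monotonicity in hand I would argue both inclusions. If $(x,y)\lessdot_{\mathcal E}(x',y')$, then their ranks differ by one, so by strict monotonicity no element can lie strictly between them, and the pair is a genuine cover. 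Conversely, if $(x',y')$ genuinely covers $(x,y)$, unfold the relation into a $\lessdot_{\mathcal E}$-chain as above; were its length at least two, the first intermediate term $(x_1,y_1)$ would satisfy $(x,y)<_{\mathcal E}(x_1,y_1)<_{\mathcal E}(x',y')$, contradicting that $(x',y')$ covers $(x,y)$, so the chain has length one and $(x,y)\lessdot_{\mathcal E}(x',y')$.

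Finally I would assemble the pieces: the genuine covers are exactly the $\lessdot_{\mathcal E}$-pairs, and these raise $\rk_{\mathcal E}$ by one, giving the first axiom, while the second axiom was verified above. Hence $\rk_{\mathcal E}$ is a rank function and $\mathcal E(P)$ is a graded poset.
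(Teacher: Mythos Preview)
Your proof is correct and uses the same rank function $\rk_{\mathcal E}(x,y)=\rk_P(x)$ as the paper. The paper's argument, however, is a single sentence: it simply observes that $(x,y)\lessdot_{\mathcal E}(x',y')$ forces $\rk_P(x')=\rk_P(x)+1$ by definition, and declares the lemma proved. In particular, the paper does not explicitly verify axiom~(2) nor check that the declared relation $\lessdot_{\mathcal E}$ really is the covering relation of the transitive closure --- the very point you flag as ``the main obstacle.'' Your additional care is warranted and fills a genuine gap in the paper's exposition; the paper simply treats these verifications as routine and omits them.
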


\begin{proof}
To show $\mathcal E(P)$ is graded, we must show that $(x, y) \lessdot_{\mathcal E} (x^\prime, y^\prime) \implies \rk(x, y)+1 = \rk(x^\prime , y^\prime)$.  This fact follows immediately from the definition of $\lessdot_{\mathcal E}$ and the definition $\rk_{\mathcal E}(x, y) = \rk_P(x)$.
\end{proof}

\begin{lem}\label{lem:Ff_poset_morphism}
Let $f\colon P\rightarrow Q$ be a morphism of finite graded posets, and define a map $\mathcal E(f)\colon \mathcal E(P)\rightarrow \mathcal E(Q)$ by $\mathcal{E}(f)(x,y) = (f(x),f(y))$ for all $(x,y)\in \mathcal{E}(P)$.  Then

\begin{enumerate}
\item $\mathcal{E}(f)$ is a morphism of finite graded posets,
\item $\mathcal{E}(\id_P) = \id_{\mathcal{E}(P)}$, and
\item if $g\colon Q\rightarrow R$ is a morphism of finite graded posets, then $\mathcal E(g\circ f) = \mathcal E(g)\circ\mathcal E(f)$.
\end{enumerate}
\end{lem}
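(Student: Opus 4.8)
The plan is to treat the three assertions in order, noting that the real content lies in part (1); parts (2) and (3) will follow by unwinding the defining formula $\mathcal{E}(f)(x,y) = (f(x),f(y))$ directly. For (1) there are in fact three things to verify: that $\mathcal{E}(f)$ actually lands in $\mathcal{E}(Q)$ (well-definedness), that it is rank-preserving, and that it is order-preserving.

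First I would establish well-definedness. Take $(x,y)\in\mathcal{E}(P)$, so $x\le_P y$ and $\rk(y)=\rk(x)+1$; I must show $(f(x),f(y))\in\mathcal{E}(Q)$, i.e.\ $f(x)\le_Q f(y)$ with $\rk(f(y))=\rk(f(x))+1$. Since $f$ is order-preserving, $x\le_P y$ gives $f(x)\le_Q f(y)$, and since $f$ is rank-preserving, $\rk(f(x))=\rk(x)$ and $\rk(f(y))=\rk(y)=\rk(x)+1=\rk(f(x))+1$. In particular $f(x)\neq f(y)$, so this is a genuine strict relation whose ranks differ by one, which is exactly the condition for membership in $\mathcal{E}(Q)$. (Here it is worth recording the small graded-poset fact that whenever $a\le b$ and $\rk(b)=\rk(a)+1$ one automatically has $a\lessdot b$, since any intermediate element would have rank strictly between two consecutive integers; this is immediate from the definition of a graded poset in Section~\ref{sec:background} and I will reuse it below.)

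Next, rank-preservation is immediate from the rank conventions: $\rk_{\mathcal{E}(Q)}(f(x),f(y))=\rk_Q(f(x))=\rk_P(x)=\rk_{\mathcal{E}(P)}(x,y)$, using that $f$ is rank-preserving. For order-preservation I would exploit that $\le_{\mathcal{E}}$ is defined as the transitive closure of $\lessdot_{\mathcal{E}}$, so it suffices to check that $\mathcal{E}(f)$ carries covering relations to relations in $\mathcal{E}(Q)$ and then invoke transitivity. Suppose $(x,y)\lessdot_{\mathcal{E}}(x',y')$, meaning $x\lessdot_P x'$ and $y\lessdot_P y'$. By the argument of the previous paragraph applied to each coordinate, $f(x)\lessdot_Q f(x')$ and $f(y)\lessdot_Q f(y')$, hence $\mathcal{E}(f)(x,y)\lessdot_{\mathcal{E}}\mathcal{E}(f)(x',y')$. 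Now if $(x,y)\le_{\mathcal{E}}(x',y')$, choose a saturated chain of $\lessdot_{\mathcal{E}}$-covers realizing this relation; applying $\mathcal{E}(f)$ termwise produces a chain of $\lessdot_{\mathcal{E}}$-covers in $\mathcal{E}(Q)$, whose transitive closure yields $\mathcal{E}(f)(x,y)\le_{\mathcal{E}}\mathcal{E}(f)(x',y')$. This completes (1).

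Finally, parts (2) and (3) are direct computations. For (2), $\mathcal{E}(\id_P)(x,y)=(\id_P(x),\id_P(y))=(x,y)=\id_{\mathcal{E}(P)}(x,y)$. For (3), for any $(x,y)\in\mathcal{E}(P)$ both sides evaluate to $(g(f(x)),g(f(y)))$: the left side is $\mathcal{E}(g\circ f)(x,y)=((g\circ f)(x),(g\circ f)(y))$, and the right side is $\mathcal{E}(g)(\mathcal{E}(f)(x,y))=\mathcal{E}(g)(f(x),f(y))$, which agree. The only step requiring any thought is the well-definedness/covers-to-covers observation in part (1), where one must pass through the graded-poset fact that a rank-one relation is automatically a cover; once that is in hand, reducing order-preservation to covers via the transitive-closure definition of $\le_{\mathcal{E}}$ dispatches the rest, and I expect no substantive obstacle.
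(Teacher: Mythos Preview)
Your proof is correct and follows essentially the same approach as the paper: verify rank-preservation directly, reduce order-preservation to the covering relation and then pass to the transitive closure, and dispatch (2) and (3) by unwinding the definition. You are in fact more careful than the paper on two points: you explicitly check that $\mathcal{E}(f)$ lands in $\mathcal{E}(Q)$, and you justify why $f(x)\lessdot_Q f(x')$ (rather than merely $f(x)\le_Q f(x')$) via the observation that a rank-one relation in a graded poset is automatically a cover---the paper simply asserts this step.
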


\begin{proof}
First, we show (1).
Observe that $\mathcal E(f)$ is rank-preserving, since for all $(x, y)\in \mathcal E(P)$ we have 
$$\rk_{\mathcal E(P)}(x, y) = \rk_P(x) = \rk_Q(f(x)) = \rk_{\mathcal E(Q)}(\mathcal E(f)(x, y)).$$
Suppose $(x, y)\lessdot_{\mathcal E(P)} (x^\prime, y^\prime)$.  Then $x\lessdot_P x^\prime$ and $y\lessdot_P y^\prime$, and since $f$ is order-preserving and rank-preserving, it follows that $f(x)\lessdot_Q f(x^\prime)$ and $f(y)\lessdot_Q f(y^\prime)$. Hence $\mathcal E(f)(x,y) \lessdot_{\mathcal E(Q)} \mathcal E(f)(x^\prime , y^\prime)$. Since $\le_{\mathcal E(Q)}$ is the transitive closure of $\lessdot_{\mathcal E(Q)}$, we similarly obtain that $\mathcal E(f)$ is order-preserving and hence a morphism of finite graded posets.

Next, (2) is trivial.

Finally, we show (3). For all $(x, y)\in \mathcal E(P)$ we have 
\[
	\pushQED{\qed}
	\mathcal E(g\circ f)(x, y) = (g(f(x)), g(f(y))) = \left(\mathcal E(g)\circ\mathcal E(f)\right)(x, y).\hfill \qedhere
\]
\end{proof}

\begin{rmk}
By Lemmas \ref{lem:f_partial_order}, \ref{lem:FP_graded_poset}, and \ref{lem:Ff_poset_morphism}, the edge poset construction $\mathcal{E}$ defines an endofunctor on the category of finite graded posets with rank-preserving morphisms.
\end{rmk}

An action of $G$ on $P$ naturally induces an action of $G$ on $\mathcal E(P).$ Each element $g \in G$ is an automorphism of $P$, so $\mathcal E(g)$ is an automorphism of $\mathcal E(P)$. Lemma \ref{lem:Ff_poset_morphism} guarantees that this is a well-defined group action.

\begin{defn}\label{note:G_action_on_FP}
Given a $G$-action on $P$, define a $G$-action on $\mathcal E(P)$ by $g\cdot (x,y) = \mathcal{E}(g)(x,y) = (gx,gy)$.
\end{defn}

We then obtain a quotient poset $\mathcal E(P)/G$.  It is natural to ask whether the operation of quotienting out by $G$ commutes with $\mathcal E$, that is, whether $\mathcal E(P/G) \cong \mathcal E(P)/G$.  Unfortunately the two posets are rarely isomorphic, but there is always a surjection $\mathcal E(P)/G\rightarrow \mathcal E(P/G)$, and this surjection is also an injection precisely when the $G$-action on $P$ is CCT, as will be shown in Lemma \ref{lem:cover_transitive_equivalence}.

\begin{prop}\label{prop:surjection_between_F_quotients}
The map $q\colon \mathcal E(P)/G\rightarrow \mathcal E(P/G)$ defined by $q(G(x, y)) = (Gx,Gy)$ is a surjective morphism.
\end{prop}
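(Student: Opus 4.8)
The plan is to realize $q$ as the map induced on orbits by the functorially defined morphism $\mathcal E(\pi)$, where $\pi\colon P\to P/G$ is the canonical quotient map $\pi(x)=Gx$. First I would check that $\pi$ is a morphism of graded posets in the sense of Section~\ref{sec:background}: it is rank-preserving because the rank function on $P/G$ is $\rk(Gx)=\rk(x)$ (well-defined since $G$ acts rank-preservingly), and it is order-preserving directly from the definition of $\le_{P/G}$, since $x\le_P y$ gives $Gx\le_{P/G}Gy$ witnessed by $x,y$ themselves. By Lemma~\ref{lem:Ff_poset_morphism}(1), the map $\mathcal E(\pi)\colon\mathcal E(P)\to\mathcal E(P/G)$, $(x,y)\mapsto(Gx,Gy)$, is then a morphism of graded posets. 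In particular this guarantees $(Gx,Gy)\in\mathcal E(P/G)$ for every $(x,y)\in\mathcal E(P)$, so the target of $q$ is legitimate, and it records that $\mathcal E(\pi)$ is both rank- and order-preserving.

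Next I would observe that $\mathcal E(\pi)$ is constant on $G$-orbits: for $g\in G$ we have $\mathcal E(\pi)(gx,gy)=(Ggx,Ggy)=(Gx,Gy)=\mathcal E(\pi)(x,y)$. Hence $\mathcal E(\pi)$ factors through the orbit map $\mathcal E(P)\to\mathcal E(P)/G$, and the induced map on orbits is exactly $q$; this shows simultaneously that $q$ is well-defined (independent of the chosen representative). Rank-preservation of $q$ is immediate from that of $\mathcal E(\pi)$. For order-preservation, suppose $G(x,y)\le_{\mathcal E(P)/G}G(x',y')$; by the definition of the quotient order there exist representatives $(a,b)$ and $(a',b')$ of the two orbits with $(a,b)\le_{\mathcal E(P)}(a',b')$, and applying the order-preserving map $\mathcal E(\pi)$ yields $(Gx,Gy)=\mathcal E(\pi)(a,b)\le_{\mathcal E(P/G)}\mathcal E(\pi)(a',b')=(Gx',Gy')$, i.e.\ $q(G(x,y))\le q(G(x',y'))$.

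Finally, for surjectivity I would lift an arbitrary edge of $P/G$. Let $(Gu,Gv)\in\mathcal E(P/G)$, so $Gu\le_{P/G}Gv$ with $\rk(Gv)=\rk(Gu)+1$. Unwinding the definition of $\le_{P/G}$ produces representatives $a\in Gu$ and $b\in Gv$ with $a\le_P b$; since $\rk(b)=\rk(Gv)=\rk(Gu)+1=\rk(a)+1$ we have $a\neq b$, hence $a<_P b$, and because $P$ is graded a relation between elements of consecutive rank admits no intermediate element, so $a\lessdot_P b$. Thus $(a,b)\in\mathcal E(P)$ and $q(G(a,b))=(Ga,Gb)=(Gu,Gv)$, proving $q$ is onto.

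I expect the only step requiring genuine rather than purely formal care to be this last lifting: turning the comparability $Gu\le_{P/G}Gv$, witnessed merely at the level of orbits, into an honest covering pair $a\lessdot_P b$ upstairs. This works precisely because $P$ is graded, so that consecutive ranks force a cover, and because $\le_{P/G}$ is defined by the existence of a \emph{single} comparable pair of representatives rather than by a transitive closure; the fact that this na\"ive definition is already a partial order is itself a consequence of the $G$-action being order-preserving, which is the structural feature that makes the argument go through.
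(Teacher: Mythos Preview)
Your argument is correct. The route differs modestly from the paper's: where the paper checks well-definedness, order-preservation, and surjectivity by hand (and in fact declares surjectivity ``clear'' without writing the lifting argument), you instead factor $q$ through the functorially defined morphism $\mathcal E(\pi)$ and invoke Lemma~\ref{lem:Ff_poset_morphism} to get the morphism properties for free, then do the lift explicitly. Your approach has the advantage of reusing the functoriality already established in the paper and of making the surjectivity step fully transparent; the paper's approach is slightly more elementary in that it never names $\pi$ or $\mathcal E(\pi)$ and works directly with covering relations. The content is the same, and your careful justification that a comparability $Gu\le_{P/G}Gv$ between orbits of consecutive rank lifts to an honest cover $a\lessdot_P b$ is exactly the point the paper elides.
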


\begin{proof}

Note that $q$ is well-defined because if $(x^\prime, y^\prime) = g(x, y) = (g\cdot x, g\cdot y)$ for some $g\in G$, then $x^\prime\in Gx$ and $y^\prime\in Gy$.  Clearly $q$ is rank-preserving and surjective, so it suffices to show that $q$ is order-preserving.  Suppose that $G(x, y) \lessdot_{\mathcal E(P)/G} G(w, z)$.  Then there exist some $(x_0, y_0)\in G(x, y)$, $(w_0, z_0)\in G(w, z)$ such that $x_0\lessdot_P w_0$ and $y_0\lessdot_P z_0$.  We then have that $(Gx, Gy) \lessdot_{\mathcal E(P/G)} (Gw, Gz)$ by definition. Since $\le_{\mathcal E(P/G)}$ is the transitive closure of $\lessdot_{\mathcal E(P/G)}$, $q$ is order-preserving.
\end{proof}

\ssec{The Opposite Functor and Self-Dual Posets}
\label{ssec:dual_posets}

Next, we discuss the notion of a dual poset, given by applying the opposite functor, $\op$, to a graded poset. We will show that $\op$ commutes with $\mathcal E$. This will imply that $\mathcal E(P)$ is self-dual if $P$ is, which in turn will imply that $\mathcal E(B_n/G)$ is self-dual for any group action of $G$ on $B_n$.

\begin{defn}
Let $\mathcal P$ be the category of graded posets and let $\op\colon\mathcal P \rightarrow \mathcal P$ be the opposite functor, defined on posets as follows. For $P$ a poset, the elements of $P^{\op}$ are the same as those of $P$ with order relation $\le_{P^{\op}}$ defined by $x \leq_{P^{\op}} y \iff x \geq_P y$. Induced maps on morphisms are given as follows: for $P,Q$ graded posets with $f\colon P \rightarrow Q$, then $f^{\op}\colon P^{\op} \rightarrow Q^{\op}$ is defined by $f^{\op}(x) = f(x)$. The poset $P^{\op}$ is called the {\it dual} poset of $P$. A poset $P$ is {\it self-dual} if there is an isomorphism of posets $P \cong P^{\op}$.
\end{defn}

\begin{rmk}
Note that it is easy to check $\op\colon\mathcal P \rightarrow \mathcal P$ is indeed a covariant functor. In more abstract terms, if we view $P$ as a category, then $P^{\op}$ is the opposite category. Additionally, $\op$ as defined in this way is actually an endofunctor on the category of all finite posets, which restricts to a functor on the subcategory of graded posets.
\end{rmk}

\begin{lem}
\label{lem:commuting_op_e}
The functor $\op\colon\mathcal P \rightarrow \mathcal P$ commutes with the functor $\mathcal E\colon\mathcal P \rightarrow \mathcal P$. That is, $\mathcal E(P^{\op}) \cong \mathcal E(P)^{\op}$.
\end{lem}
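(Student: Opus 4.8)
The plan is to exhibit an explicit isomorphism $\phi\colon \mathcal E(P^{\op}) \to \mathcal E(P)^{\op}$ given by the coordinate swap $\phi(x,y) = (y,x)$, and then to verify that it is a rank-preserving, order-preserving bijection whose inverse is also order-preserving. First I would check that $\phi$ is a well-defined bijection of underlying sets. An element $(x,y) \in \mathcal E(P^{\op})$ satisfies $x \le_{P^{\op}} y$ with $\rk_{P^{\op}}(y) = \rk_{P^{\op}}(x)+1$; unwinding the definition of $\op$, this says exactly that $y \lessdot_P x$, so $(y,x)$ is a genuine element of $\mathcal E(P)$ and hence of $\mathcal E(P)^{\op}$. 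The swap is an involution on the set of covering pairs of $P$, so $\phi$ is a bijection.

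Next I would verify rank-preservation. Writing $n$ for the rank of $P$, we have $\rk_{P^{\op}}(x) = n - \rk_P(x)$, so $\rk_{\mathcal E(P^{\op})}(x,y) = n - \rk_P(x)$. On the other side $\mathcal E(P)$ has rank $n-1$, so $\rk_{\mathcal E(P)^{\op}}(y,x) = (n-1) - \rk_{\mathcal E(P)}(y,x) = (n-1) - \rk_P(y) = n - \rk_P(x)$, using $\rk_P(y) = \rk_P(x)-1$. The two ranks agree.

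The heart of the argument is to match up covering relations. I would show that $(x,y) \lessdot_{\mathcal E(P^{\op})} (x',y')$ holds if and only if $\phi(x,y) \lessdot_{\mathcal E(P)^{\op}} \phi(x',y')$. Expanding the left side via the definitions of $\lessdot_{\mathcal E}$ and of $\op$ yields the pair of conditions $x' \lessdot_P x$ and $y' \lessdot_P y$. Expanding the right side, and using the standard fact that covering relations reverse under $\op$ (so $(a,b) \lessdot_{\mathcal E(P)^{\op}} (a',b')$ iff $(a',b') \lessdot_{\mathcal E(P)} (a,b)$), gives $(y',x') \lessdot_{\mathcal E(P)} (y,x)$, which unwinds to the same two conditions $y' \lessdot_P y$ and $x' \lessdot_P x$. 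Thus $\phi$ carries the Hasse diagram of $\mathcal E(P^{\op})$ bijectively onto that of $\mathcal E(P)^{\op}$.

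To conclude, I would note that $\le_{\mathcal E(P^{\op})}$ is by definition the transitive closure of $\lessdot_{\mathcal E(P^{\op})}$, while $\le_{\mathcal E(P)^{\op}}$ is the reverse of $\le_{\mathcal E(P)}$ and hence the transitive closure of the reversed covering relation $\lessdot_{\mathcal E(P)^{\op}}$ (reversal commutes with transitive closure). Since $\phi$ induces a bijection of covering relations, it induces a bijection of their transitive closures, so both $\phi$ and $\phi^{-1}$ are order-preserving and $\phi$ is an isomorphism. I expect the only real subtlety to be bookkeeping: keeping straight that passing to $\op$ simultaneously reverses the order and sends rank $i$ to rank $n-i$, and confirming that reversal commutes with transitive closure so that matching Hasse diagrams is enough. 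No genuinely hard step should arise.
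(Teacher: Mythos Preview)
Your proof is correct and takes the same route as the paper: write down an explicit bijection and check directly that it matches up covering relations, hence the transitive closures. The only difference is cosmetic. The paper records the isomorphism as the identity $(x,y)\mapsto(x,y)$, asserting that $\mathcal E(P^{\op})$ and $\mathcal E(P)^{\op}$ are literally equal as sets; you instead use the coordinate swap $(x,y)\mapsto(y,x)$. Your version is actually the more careful one: with the paper's convention that elements of $\mathcal E(Q)$ are ordered pairs $(x,y)$ with $x\lessdot_Q y$, an element of $\mathcal E(P^{\op})$ is a pair $(x,y)$ with $y\lessdot_P x$, while an element of $\mathcal E(P)^{\op}$ is a pair with $x\lessdot_P y$, so the swap is what makes the set-level bijection honest. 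After that relabeling the two arguments coincide, and your bookkeeping on ranks and on why matching Hasse diagrams suffices is fine.
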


\begin{proof}
Define the morphism  $F\colon\mathcal E(P^{\op}) \rightarrow \mathcal E(P)^{\op}$ by sending an edge between two vertices $x$ and $y$ of $P^{\op}$ to the edge between same two vertices $x$ and $y$ of $P$. The inverse to $F$ is given by $G\colon\mathcal E(P)^{\op} \rightarrow \mathcal E(P^{\op})$ sending an edge between two vertices $x$ and $y$ of $P$ to the edge between the same two vertices $x$ and $y$ of $P^{\op}$. 
\end{proof}

\begin{prop}
\label{prop:self_dual_preservation}
If $P$ is a self-dual poset, then $\mathcal E(P)$ is also self-dual.
\end{prop}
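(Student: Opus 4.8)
The plan is to deduce the result immediately from the functoriality of $\mathcal E$ together with Lemma \ref{lem:commuting_op_e}. The key categorical fact I will use is that any covariant functor sends isomorphisms to isomorphisms; applying $\mathcal E$ to a self-duality isomorphism of $P$ should therefore produce an isomorphism between the corresponding edge posets, and Lemma \ref{lem:commuting_op_e} will let me reinterpret the target as $\mathcal E(P)^{\op}$.

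First I would invoke the hypothesis: since $P$ is self-dual, fix an isomorphism $\phi\colon P \xrightarrow{\sim} P^{\op}$ of graded posets, with two-sided inverse $\phi^{-1}$. Applying $\mathcal E$ and using parts (2) and (3) of Lemma \ref{lem:Ff_poset_morphism}, I get
\[
\mathcal E(\phi)\circ\mathcal E(\phi^{-1}) = \mathcal E(\phi\circ\phi^{-1}) = \mathcal E(\id) = \id,
\]
and symmetrically $\mathcal E(\phi^{-1})\circ\mathcal E(\phi) = \id$. Hence $\mathcal E(\phi)\colon \mathcal E(P)\to \mathcal E(P^{\op})$ is a genuine isomorphism. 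This is the one point requiring care: the paper deliberately distinguishes bijective morphisms from isomorphisms (Remark \ref{rem:bijective_morphism_not_isomorphism}), so it is important that I start from an honest isomorphism possessing a two-sided inverse, which is exactly the input functoriality needs.

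Then I would chain this with the canonical isomorphism $\mathcal E(P^{\op}) \cong \mathcal E(P)^{\op}$ provided by Lemma \ref{lem:commuting_op_e}. Composing gives an isomorphism $\mathcal E(P) \xrightarrow{\mathcal E(\phi)} \mathcal E(P^{\op}) \cong \mathcal E(P)^{\op}$, i.e. $\mathcal E(P) \cong \mathcal E(P)^{\op}$, which is precisely the assertion that $\mathcal E(P)$ is self-dual. There is essentially no obstacle beyond keeping track of the bijective-morphism-versus-isomorphism distinction flagged in the remark; since we begin with a true isomorphism, functoriality carries out the rest.
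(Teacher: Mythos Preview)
Your proof is correct and follows essentially the same route as the paper's: apply the functor $\mathcal E$ to a self-duality isomorphism $P\to P^{\op}$, use functoriality (Lemma~\ref{lem:Ff_poset_morphism}) to see that $\mathcal E(\phi)$ is an isomorphism, and then compose with the canonical identification $\mathcal E(P^{\op})\cong\mathcal E(P)^{\op}$ from Lemma~\ref{lem:commuting_op_e}. If anything, your version is slightly more explicit in verifying that $\mathcal E(\phi)$ has a two-sided inverse, which is a good response to the issue flagged in Remark~\ref{rem:bijective_morphism_not_isomorphism}.
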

\begin{proof}
Since $P$ is self-dual, there is an isomorphism $f\colon P \rightarrow P^{\op}$. By the  functoriality of $\mathcal E$, as shown in Lemma \ref{lem:Ff_poset_morphism}, we obtain that $\mathcal E(f)\colon\mathcal E(P) \rightarrow \mathcal E(P^{\op})$ is an isomorphism. By Lemma \ref{lem:commuting_op_e}, there is an isomorphism $\mathcal E(P^{\op}) \cong \mathcal E(P)^{\op}$. Then, let 
$
F\colon\mathcal E(P^{\op}) \rightarrow \mathcal E(P)^{\op}
$
be the same isomorphism defined in the proof of Lemma \ref{lem:commuting_op_e}, the composition $F\circ \mathcal E(f)\colon\mathcal E(P) \rightarrow \mathcal E(P)^{\op}$ defines an isomorphism, so $\mathcal E(P)$ is self-dual.
\end{proof}

\begin{eg}
While $\mathcal{E}(P)$ is often Peck for known Peck posets $P$, $\mathcal E(P)$ need not be Peck in general.  Furthermore, adding the condition that $P$ be self-dual does not change this fact.  In Figure \ref{fig:dual_not_unimodal} we give an example of a poset $P$ such that $P$ is unitary Peck and self-dual, but $\mathcal{E}(P)$ is not rank-unimodal and hence not Peck.
\end{eg}

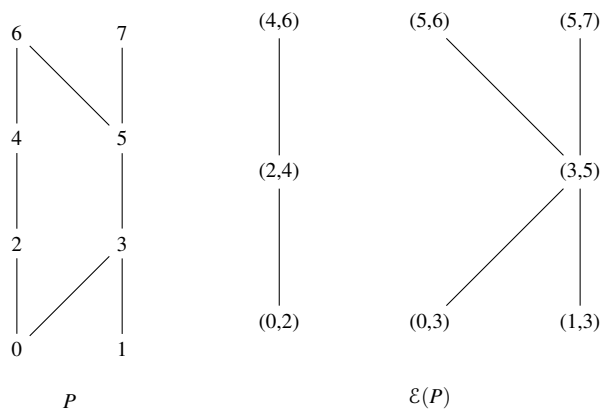
\begin{figure}[h]
\[
\begin{tikzpicture}[scale = .7]
  \node (0) at (0,0) {0};
  \node (1) at (2,0) {1};
  \node (2) at (0,2) {2};
  \node (3) at (2,2) {3};
  \node (4) at (0,4) {4};
  \node (5) at (2,4) {5};
  \node (6) at (0,6) {6};
  \node (7) at (2,6) {7};
  \draw (0)--(2);
  \draw (0)--(3);
  \draw (1)--(3);
  \draw (2)--(4);
  \draw (3)--(5);
  \draw (4)--(6);
  \draw (5)--(6);
  \draw (5)--(7);
  \node (8) at (1,-1) {$P$};
\end{tikzpicture}\qquad \hspace{10mm}
\begin{tikzpicture}
  \node (02) at (0,0) {(0,2)};
  \node (03) at (2,0) {(0,3)};
  \node (13) at (4,0) {(1,3)};
  \node (24) at (0,2) {(2,4)};
  \node (35) at (4,2) {(3,5)};
  \node (46) at (0,4) {(4,6)};
  \node (56) at (2,4) {(5,6)};
  \node (57) at (4,4) {(5,7)};
  \draw (02)--(24);
  \draw (03)--(35);
  \draw (13)--(35);
  \draw (24)--(46);
  \draw (35)--(56);
  \draw (35)--(57);
  \node (0) at (2,-1) {$\mathcal{E}(P)$};
\end{tikzpicture}\]
\caption{\label{fig:dual_not_unimodal}$P$ is self-dual and unitary Peck, but $\mathcal{E}(P)$ is not Peck.}
\end{figure}

\begin{rmk}
\label{rem:induced_action_bn}
Whenever there is an action $\psi\colon G \times [n] \rightarrow [n]$, we obtain an induced action $\phi: G \times B_n \rightarrow B_n$ defined by
$$\phi(g,\{x_1,\ldots, x_k\}) = \{\psi(g,x_1),\ldots, \psi(g,x_k)\}.$$
It is easy to see that any action $\phi\colon G \times B_n \rightarrow B_n$ arises in this way. That is, for any action $\phi$ of $G$ on $B_n$ there exists an action $\psi$ of $G$ on $[n]$ such that $\phi(g,\{x_1,\ldots, x_k\}) = \{\psi(g,x_1),\ldots, \psi(g,x_k)\}$. Whenever an action $\psi$ of $G$ on $[n]$ is given, we refer to the action $\phi$ defined above as the {\it induced action} on $B_n$.
\end{rmk}

\begin{cor}
\label{cor:duality_bn_quotients}
For any action $\phi\colon G \times B_n \rightarrow B_n$, we have that
\begin{itemize}
	\item $B_n/G$,
	\item $\mathcal E(B_n/G)$, and
	\item $\mathcal E(B_n)/G$
\end{itemize} are self-dual. In particular, they are all rank-symmetric 
\end{cor}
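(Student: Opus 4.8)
The plan is to reduce both statements to the single preservation result Proposition~\ref{prop:self_dual_preservation}, together with the observation that the relevant dualizing isomorphisms are $G$-equivariant and hence descend to quotients. The one structural fact I will need beyond the excerpt is that the opposite functor commutes with quotients: for any graded poset $P$ carrying a $G$-action, the identity on orbits gives a canonical isomorphism $(P/G)^{\op} \cong P^{\op}/G$. This holds because the defining relation $\mathcal O \le \mathcal O'$ on a quotient (``there exist representatives $x \in \mathcal O$, $x' \in \mathcal O'$ with $x \le x'$'') is manifestly symmetric under simultaneously reversing $\le$ and swapping the two orbits, so reversing the order on $P$ before quotienting produces the same poset as reversing afterward.

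First I would record that $B_n$ is self-dual in a $G$-equivariant way. The complementation map $c\colon B_n \to B_n$, $c(x) = [n]\setminus x$, is order-reversing and rank-reversing, hence an isomorphism $B_n \to B_n^{\op}$. By Remark~\ref{rem:induced_action_bn} every action of $G$ on $B_n$ is induced from an action on $[n]$, so each $g \in G$ acts as a permutation of $[n]$; such a permutation commutes with complementation, giving $c(gx) = g\,c(x)$. Thus $c$ is $G$-equivariant.

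For $\mathcal E(B_n/G)$: since $c$ is a $G$-equivariant isomorphism $B_n \to B_n^{\op}$, it descends to an isomorphism $B_n/G \to B_n^{\op}/G \cong (B_n/G)^{\op}$, using the commuting fact above, so $B_n/G$ is self-dual. Proposition~\ref{prop:self_dual_preservation} then immediately yields that $\mathcal E(B_n/G)$ is self-dual. For $\mathcal E(B_n)/G$: by Proposition~\ref{prop:self_dual_preservation} and Lemma~\ref{lem:commuting_op_e}, the composite $\Phi = F \circ \mathcal E(c)\colon \mathcal E(B_n) \to \mathcal E(B_n)^{\op}$, which sends $(x,y)\mapsto (c(x),c(y))$, is an isomorphism. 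This $\Phi$ is again $G$-equivariant, since $\Phi(g\cdot(x,y)) = (c(gx),c(gy)) = (g\,c(x), g\,c(y)) = g\cdot \Phi(x,y)$ and the $G$-action on $\mathcal E(B_n)^{\op}$ coincides on underlying sets with that on $\mathcal E(B_n)$. Hence $\Phi$ descends to $\mathcal E(B_n)/G \to \mathcal E(B_n)^{\op}/G \cong (\mathcal E(B_n)/G)^{\op}$, proving $\mathcal E(B_n)/G$ is self-dual. Rank-symmetry is then automatic: any self-duality isomorphism carries rank $i$ to rank $n-i$ because $\op$ negates the grading, so $|P_i| = |P_{n-i}|$ for every self-dual $P$.

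I expect the only real work to be the two bookkeeping verifications—that $\op$ commutes with the quotient construction and that each dualizing map is $G$-equivariant—neither of which is deep, but both of which must be stated carefully. As the excerpt emphasizes, the quotient operation and $\mathcal E$ do \emph{not} themselves commute, so the two assertions genuinely require separate descent arguments rather than being deducible one from the other; this is the main point to keep straight.
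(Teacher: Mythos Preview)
Your proposal is correct and follows essentially the same approach as the paper: show $B_n/G$ is self-dual via complementation (using Remark~\ref{rem:induced_action_bn} for $G$-equivariance) and apply Proposition~\ref{prop:self_dual_preservation}; then show the self-duality of $\mathcal E(B_n)$ is $G$-equivariant and descend to the quotient. One small slip: the composite $F\circ\mathcal E(c)$ should send $(x,y)\mapsto (c(y),c(x))$ rather than $(c(x),c(y))$, since for $x\lessdot y$ one has $c(y)\lessdot c(x)$ in $B_n$; the paper records this explicitly as $(x,y)\mapsto([n]\setminus y,[n]\setminus x)$.
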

\begin{proof}
	By Remark \ref{rem:induced_action_bn}, any action $\phi\colon G\times B_n \rightarrow B_n$ is induced by an action $\psi\colon G \times [n] \rightarrow [n]$. 
	Define $f: B_n \rightarrow B_n^{\op}$ to be the map sending a set to its complement.
	Using this, observe that for any $\phi$, the poset $B_n/G$ is self-dual, as there is an isomorphism 
\begin{align*}
	f^G \colon B_n/G &\rightarrow (B_n/G)^{\op} \\
	G \cdot x &\mapsto G \cdot ([n] \setminus x).
\end{align*}
This map is well-defined on $G$-orbits because every action on $B_n$ is induced by an action on $[n]$. Then, by Proposition \ref{prop:self_dual_preservation}, it follows that $\mathcal E(B_n/G)$ is self-dual.

It only remains to prove that $\mathcal E(B_n)/G$ is self-dual. However, from Proposition \ref{prop:self_dual_preservation}, $\mathcal E(B_n)$ is self-dual, with the isomorphism given by 
\begin{align*}
	\mathcal E(f)\colon\mathcal E(B_n) & \rightarrow \mathcal E(B_n^{\op}) \cong \mathcal E(B_n)^{\op}\\
	(x,y) & \mapsto ([n]\setminus y,[n] \setminus x).
\end{align*}
\noindent
Once again, since the action on $B_n$ is induced by an action on $[n]$, this isomorphism descends to an isomorphism $\mathcal E(f)^G\colon\mathcal E(B_n)/G \rightarrow  (\mathcal{E}(B_n)/G)^{\op}$. Thus, $\mathcal E(B_n)/G$ is self-dual.
\end{proof}

\ssec{Equivalent Definitions of Common Cover Transitive Actions}
\label{ssec:equivalent_defs}

We next give four equivalent definitions of CCT actions. 

\begin{lem}
\label{lem:cover_transitive_equivalence}
Let $G$ be a group acting on a graded poset $P$. The following are equivalent:
\begin{enumerate}
  \item The action of $G$ on $P$ is CCT.
  \item Whenever $x \lessdot y,x \lessdot z$, and $y \in Gz$, there exists some $g \in \Stab(x)$ with $gy = z$.
  \item The map $q\colon \mathcal E(P)/G\rightarrow \mathcal E(P/G)$ defined by $q(G(x, y)) = (Gx,Gy)$ is a bijective morphism (but not necessarily an isomorphism).
  \item For all $i$ there is an equality $|(\mathcal E(P)/G)_i|=| (\mathcal E(P/G))_i|$.
\end{enumerate}
\end{lem}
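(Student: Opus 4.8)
The plan is to prove the equivalence of the four conditions by establishing a cycle of implications. I would organize this as $(1)\Leftrightarrow(2)$ using the self-duality machinery developed in Subsection~\ref{ssec:dual_posets}, and then $(1)\Rightarrow(3)\Rightarrow(4)\Rightarrow(1)$ using the surjection $q$ from Proposition~\ref{prop:surjection_between_F_quotients}. Since $q$ is already known to be a surjective morphism, the content of condition $(3)$ reduces entirely to injectivity of $q$, and $(4)$ is just the rank-by-rank cardinality consequence of bijectivity.

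Let me think about what each implication requires.

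For $(1)\Leftrightarrow(2)$: Condition $(1)$ concerns two edges $x\lessdot z$ and $y\lessdot z$ sharing a top element $z$ with $y\in Gx$, asking for $g\in\Stab(z)$ moving $x$ to $y$. Condition $(2)$ is the mirror image: two edges $x\lessdot y$ and $x\lessdot z$ sharing a bottom element $x$ with $y\in Gz$, asking for $g\in\Stab(x)$ moving $z$ to $y$ — wait, I need to read $(2)$ carefully, as it says $gx=z$, but $g\in\Stab(x)$ forces $gx=x$, so there is a typo and it should read $gy=z$ (moving the top elements). Setting that aside, the natural approach is duality: the CCT condition on $P$ (sharing tops) should correspond precisely to the analogous condition on $P^{\op}$ (sharing bottoms), since passing to $P^{\op}$ swaps covers $x\lessdot z$ into $z\lessdot_{\op} x$ and swaps "common top" for "common bottom." The group action on $P$ is equally an action on $P^{\op}$ with the same stabilizers and orbits, so applying Definition~\ref{defn:cover_transitive} to $P^{\op}$ literally yields $(2)$.

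For the main cycle: To show $(1)\Rightarrow(3)$, I would show $q$ is injective. Suppose $q(G(x,y))=q(G(x',y'))$, so $Gx=Gx'$ and $Gy=Gy'$. I want a single $g\in G$ with $g(x,y)=(x',y')$. After translating by a group element I may assume $x=x'$, reducing to the case where two edges $(x,y)$ and $(x,y')$ share the same bottom $x$ with $y'\in Gy$; the CCT-type condition (in its dual form $(2)$, sharing a bottom) then supplies $g\in\Stab(x)$ with $gy=y'$, giving $g(x,y)=(x,y')$ as needed — so this is exactly where the dual formulation $(2)$ is used, which is why establishing $(1)\Leftrightarrow(2)$ first is convenient. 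The implication $(3)\Rightarrow(4)$ is immediate, since a bijective rank-preserving morphism restricts to a bijection on each rank. For $(4)\Rightarrow(1)$, the idea is that $q$ is always a surjection by Proposition~\ref{prop:surjection_between_F_quotients}, so equality of cardinalities on every rank forces $q$ to be a bijection; running the injectivity argument backwards, a failure of CCT would produce two distinct orbits $G(x,y)\ne G(x,y')$ mapping to the same $(Gx,Gy)$, violating injectivity and hence the cardinality count.

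\textbf{The main obstacle} I anticipate is the careful bookkeeping in $(1)\Rightarrow(3)$: translating "$Gx=Gx'$ and $Gy=Gy'$" into a single group element requires first moving $(x',y')$ by some $h$ so its first coordinate agrees with $x$, and then checking that the resulting second coordinates still lie in a common $G$-orbit so that condition $(2)$ applies with the shared bottom element. One must verify that this normalization does not lose generality and that the element produced by $(2)$ genuinely equalizes both coordinates simultaneously. The duality step $(1)\Leftrightarrow(2)$ is conceptually clean but requires stating precisely that orbits and stabilizers are unchanged under $P\mapsto P^{\op}$, and correcting the evident typo $gx=z$ to $gy=z$.
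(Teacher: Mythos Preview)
Your overall architecture---$(1)\Leftrightarrow(2)$ by duality, then $(1)\Rightarrow(3)\Rightarrow(4)\Rightarrow(1)$ via the surjection $q$ of Proposition~\ref{prop:surjection_between_F_quotients}---is close to the paper's, and your treatment of $(3)\Leftrightarrow(4)$ matches it exactly. You also correctly spot the typo in~$(2)$.

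There is, however, a genuine gap in your $(1)\Leftrightarrow(2)$ step. What your duality argument actually proves is that condition~$(2)$ for the $G$-action on $P$ is \emph{the same statement as} Definition~\ref{defn:cover_transitive} applied to the $G$-action on $P^{\op}$. That is correct, but it does not give $(1)\Leftrightarrow(2)$: you still need to know that the $G$-action on $P$ is CCT if and only if the $G$-action on $P^{\op}$ is CCT, and nothing in Subsection~\ref{ssec:dual_posets} says this for a general graded poset. (The paper's own proof of $(1)\Leftrightarrow(2)$ in fact appeals to the complementation isomorphism $B_n\to B_n^{\op}$, which is $G$-equivariant; this works for $B_n$ but not for an arbitrary $P$, even though the lemma is stated in that generality.) Because your $(1)\Rightarrow(3)$ step normalizes by the \emph{bottom} coordinate and then invokes~$(2)$, the gap in $(1)\Leftrightarrow(2)$ propagates and your cycle does not close.

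The fix is painless: in the injectivity argument for $q$, normalize by the \emph{top} coordinate instead. Given $Gx=Gx'$ and $Gy=Gy'$, pick $h$ with $hy'=y$; then $(hx',y)$ and $(x,y)$ share the common cover $y$ with $hx'\in Gx$, so condition~$(1)$ directly supplies $g\in\Stab(y)$ with $gx=hx'$, whence $g(x,y)=h(x',y')$. This is exactly how the paper runs $(1)\Leftrightarrow(3)$. The symmetric argument (normalize by bottom) gives $(2)\Leftrightarrow(3)$, and now $(1)\Leftrightarrow(2)$ follows for general $P$ \emph{via} $(3)$, with no appeal to self-duality needed.
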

\begin{proof}
First, we show $(1) \iff (3)$. By Proposition \ref{prop:surjection_between_F_quotients} we know that $q$ is a bijection exactly when there do not exist distinct orbits $G(x, y) \ne G(x^\prime, y^\prime)$ with $x^\prime\in Gx$, $y^\prime\in Gy$.  Fix $(x, y), (x^\prime, y^\prime)\in \mathcal E(P)$ such that $x^\prime\in Gx$ and $y^\prime\in Gy$.  Pick a $g\in G$ such that $g\cdot y^\prime = y$.  Then $(g\cdot x^\prime, y)\in G(x^\prime, y^\prime)$, so $G(x, y) = G(x^\prime, y^\prime)$ if and only if there exists some $g^\prime\in G$ such that $g^\prime\cdot x = g\cdot x^\prime$ and $g^\prime\cdot y = y$. Hence $q$ is a bijection if and only if the $G$ action is CCT.

Second, $(2) \iff (3)$ by an analogous argument to $(1) \iff (3)$.
Finally, we check $(3)\iff (4)$. Again using Proposition ~\ref{prop:surjection_between_F_quotients}, the morphism $q$ is always surjective. Since a morphism is always rank-preserving, it must map $(\mathcal E(P)/G)_i$ surjectively onto $(\mathcal E(P/G))_i$. However, since the posets are finite, this surjection is a bijection if and only if the sets have the same cardinality.
\end{proof}

\begin{rmk}
While $q$ is a bijection if and only if the action of $G$ on $P$ is CCT, it is {\it not} true that if the action of $G$ on $P$ is CCT, then $q$ is an isomorphism.  For example, take $G=D_{20} \subset S_{10}$ acting by reflections and rotations on $\{1,2,\ldots,10\}$, and consider the induced action on $B_{10}$. By Proposition ~\ref{prop:cover_transitive_building_blocks}, this action is CCT. However, consider $x = \{2,4\},y = \{1,2,4\},a = \{2,4,7\}$, and $b = \{2,4,6,7\}$. We may observe that $(x , y),(a, b) \in \mathcal E(B_{10})$ and $Gx < Ga, Gy < Gb$, so $(Gx, Gy) <_{\mathcal E(P/G)} (Ga, Gb)$. However, it is not true that $G(x, y)<_{\mathcal E(P)/G} G(a,b)$. \end{rmk}

\ssec{Proof of Theorem \ref{thm:cover_transitive_implies_Peck}}\label{ssec:proof_of_cover_transitive_implies_Peck}

In this section we prove Theorem \ref{thm:cover_transitive_implies_Peck}, which we recall here:

\cctpeck*

The proof is largely based on the following Lemma.

\begin{lem}\label{lem:bijection_peck_implication}
Let $P,Q$ be two graded posets with a morphism $f\colon P\rightarrow Q$ that is a bijection (but not necessarily an isomorphism). If $P$ is Peck, then $Q$ is Peck.
\end{lem}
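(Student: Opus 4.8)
The plan is to exploit that an order-preserving \emph{bijection} $f$ forces $P$ and $Q$ to have the same rank sizes, and then to compare unions of antichains in $Q$ with those in $P$ by passing to \emph{preimages} under $f$ (not images). I would not attempt to invert $f$ as a poset map, since $f$ need not be an isomorphism; everything is extracted from order-preservation together with bijectivity.

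First I would record the numerical consequence. Since $f$ is rank-preserving and a set bijection, its restriction $f|_{P_i}\colon P_i \to Q_i$ is a bijection for every $i$, so $|P_i| = |Q_i|$ for all $i$. The properties of rank-symmetry and rank-unimodality depend only on the sequence $(|Q_i|)_i = (|P_i|)_i$, so $Q$ inherits both directly from $P$. It remains only to establish that $Q$ is strongly Sperner.

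The core observation is that preimages of antichains are antichains. If $A \subseteq Q$ is an antichain and $x, y \in f^{-1}(A)$ satisfy $x <_P y$, then order-preservation gives $f(x) \le_Q f(y)$, and injectivity together with $x \ne y$ gives $f(x) \ne f(y)$, so $f(x) <_Q f(y)$; this contradicts the incomparability of $f(x), f(y) \in A$. Since preimages also preserve disjointness, any disjoint union $S = A_1 \sqcup \cdots \sqcup A_k \subseteq Q$ of $k$ antichains pulls back to a disjoint union $f^{-1}(S) = f^{-1}(A_1) \sqcup \cdots \sqcup f^{-1}(A_k)$ of $k$ antichains in $P$, with $|f^{-1}(S)| = |S|$ by bijectivity.

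To finish, fix $k$ and let $S$ be any disjoint union of $k$ antichains in $Q$. By the previous paragraph $f^{-1}(S)$ is a disjoint union of $k$ antichains in $P$ of the same cardinality, so the $k$-Sperner property of $P$ bounds $|f^{-1}(S)|$ by the total size of the $k$ largest ranks of $P$. The rank-size equality $|P_i| = |Q_i|$ shows this bound equals the total size of the $k$ largest ranks of $Q$, whence $|S|$ is bounded by the size of the union of the $k$ largest ranks of $Q$. Thus $Q$ is $k$-Sperner, and since $k$ was arbitrary, $Q$ is strongly Sperner; combined with the first step, $Q$ is Peck. The only real subtlety, and the point I expect to be the main (modest) obstacle, is recognizing that one must argue with $f^{-1}$ rather than $f$: images of antichains under a mere morphism need not be antichains, whereas preimages always are, and $f^{-1}$ simultaneously preserves the antichain property, disjointness, and cardinalities.
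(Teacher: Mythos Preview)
Your proof is correct, but it takes a genuinely different route from the paper's. The paper invokes the linear-algebraic characterization of Peckness (Lemma~\ref{lem:Peck_poset_characterization}): since $P$ is Peck there is an order-raising operator $U$ with $U^{n-2i}\colon V(P_i)\to V(P_{n-i})$ an isomorphism, and one checks that the conjugate $f\circ U\circ f^{-1}$ is an order-raising operator on $Q$ (using that $f$ preserves rank and covers) whose $(n-2i)$th power is again an isomorphism. Your argument instead works directly from the definition: the bijection forces equal rank sizes, and pulling antichains back along $f$ transfers the $k$-Sperner property. The trade-off is that the paper's proof is a one-liner once Lemma~\ref{lem:Peck_poset_characterization} is in hand and meshes with the operator-theoretic viewpoint used throughout the paper, whereas your argument is entirely self-contained and does not rely on that cited characterization. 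Your observation that one must argue via $f^{-1}$ rather than $f$ is exactly right and is the combinatorial analogue of the paper's need to conjugate by $f$ rather than simply push $U$ forward.
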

\begin{proof}
Let $\rk(P) = \rk(Q) = n$.  Since $P$ is Peck there exists an order-raising operator $U$ such that $U^{n-2i}\colon V(P_i)\rightarrow V(P_{n-i})$ is an isomorphism.  Since $f$ is a poset morphism, it follows that the map $f\circ U\circ f^{-1}$ is an order-raising operator on $Q$.  We then have that $f\circ U^{n-2i}\circ f^{-1} = \left(f\circ U\circ f^{-1}\right)^{n-2i}\colon V(Q_i)\rightarrow V(Q_{n-i})$ is an isomorphism since $U^{n-2i}\colon V(P_i)\rightarrow V(P_{n-i})$ is an isomorphism and $f$ is a bijection.

\end{proof}

By Lemmas ~\ref{lem:cover_transitive_equivalence} and ~\ref{lem:bijection_peck_implication}, in order to prove Theorem \ref{thm:cover_transitive_implies_Peck} it suffices to prove that $\mathcal E(B_n)/G$ is Peck.  One way to do this is to prove that $\mathcal E(B_n)$ is unitary Peck and then apply Theorem \ref{thm:quotients_of_unitary_peck_posets}.  In fact, this approach generalizes to an arbitrary poset $P$.

\begin{thm}
\label{edge_unitary_peck_quotient}
If the action of $G$ on $P$ is CCT and $\mathcal E(P)$ is unitary Peck, then $\mathcal E(P/G)$ is Peck.
\end{thm}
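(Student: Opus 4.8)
The plan is to chain together three facts already assembled in the excerpt so that each applies in turn, with essentially no new combinatorial work: the natural $G$-action on the edge poset, Stanley's quotient theorem, and the bijective-morphism criterion for transporting Peckness. The hypotheses have been arranged precisely so that the content is front-loaded into Lemmas \ref{lem:bijection_peck_implication} and \ref{lem:cover_transitive_equivalence}, and the theorem follows by assembling them.

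First I would invoke the $G$-action on $\mathcal E(P)$ from Definition \ref{note:G_action_on_FP}, namely $g\cdot(x,y) = (gx,gy)$. Since $G$ acts on $P$ by automorphisms, this realizes $G$ as a subgroup of $\Aut(\mathcal E(P))$, so the quotient poset $\mathcal E(P)/G$ is well-defined. Because $\mathcal E(P)$ is assumed to be unitary Peck and $G\subseteq \Aut(\mathcal E(P))$, Theorem \ref{thm:quotients_of_unitary_peck_posets} immediately gives that $\mathcal E(P)/G$ is Peck.

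Next I would exploit the CCT hypothesis. By the equivalence of conditions $(1)$ and $(3)$ in Lemma \ref{lem:cover_transitive_equivalence}, the canonical surjection $q\colon \mathcal E(P)/G \to \mathcal E(P/G)$ furnished by Proposition \ref{prop:surjection_between_F_quotients} is a bijective morphism (though not necessarily an isomorphism). Applying Lemma \ref{lem:bijection_peck_implication} to this bijective morphism $q$, with source $\mathcal E(P)/G$ already known to be Peck, yields that the target $\mathcal E(P/G)$ is Peck, which is exactly the desired conclusion.

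There is no genuinely hard obstacle here, since the substantive arguments live in the two cited lemmas; the delicate point is purely one of bookkeeping about generality. One must confirm that the $(1)\iff(3)$ equivalence in Lemma \ref{lem:cover_transitive_equivalence} holds for an arbitrary graded poset $P$ and does not secretly require the boolean structure of $B_n$. Inspecting that proof, only the $(1)\iff(2)$ equivalence uses the complementation isomorphism $x\mapsto[n]\setminus x$ on $B_n$; the $(1)\iff(3)$ argument is carried out abstractly via the surjection $q$, so it applies verbatim to any $P$. This is what makes the generalization from $B_n$ to an arbitrary $P$ legitimate, and it is the only step I would flag for explicit verification.
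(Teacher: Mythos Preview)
Your proof is correct and follows essentially the same route as the paper's own argument: apply Theorem~\ref{thm:quotients_of_unitary_peck_posets} to get that $\mathcal E(P)/G$ is Peck, use the CCT hypothesis via Lemma~\ref{lem:cover_transitive_equivalence} to get the bijective morphism $q$, and conclude with Lemma~\ref{lem:bijection_peck_implication}. Your closing remark that the $(1)\iff(3)$ direction of Lemma~\ref{lem:cover_transitive_equivalence} is proved for arbitrary $P$ (unlike $(1)\iff(2)$, which uses complementation in $B_n$) is a worthwhile clarification that the paper leaves implicit.
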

\begin{proof}
Since the $G$-action is CCT, there is a bijection $q\colon\mathcal{E}(P)/G \rightarrow \mathcal{E}(P/G)$ by Lemma ~\ref{lem:cover_transitive_equivalence}.  Since $\mathcal{E}(P)$ is unitary Peck we have that $\mathcal{E}(P)/G$ is Peck by Theorem ~\ref{thm:quotients_of_unitary_peck_posets}, hence $\mathcal{E}(P/G)$ is Peck by Lemma ~\ref{lem:bijection_peck_implication}.
\end{proof}

We prove that $\mathcal E(B_n)$ is unitary Peck for $n > 2$ in Section 8 of the REU report, but unfortunately the proof is technical and computational. Note that by Theorem \ref{edge_unitary_peck_quotient}, this immediately implies Theorem \ref{thm:cover_transitive_implies_Peck}. Fortunately there is a cleaner -- albeit less direct -- route to proving Theorem \ref{thm:cover_transitive_implies_Peck}. In order to avoid showing that $\mathcal E(B_n)$ is unitary Peck, we define a graded Peck poset $\mathcal{H}(B_n)$ which injects into $\mathcal E(B_n)$. 
\begin{defn}
\label{defn:h_map}
For $P$ a graded poset, define the graded poset $\mathcal H(P)$ as follows.  Let the elements $(x, y) \in \mathcal H(P)$ be pairs $(x,y) \in P\times P$ such that $x \lessdot y$.  Define $(x, y) \lessdot_{\mathcal H} (x^\prime, y^\prime)$ if $x \lessdot x^\prime,y\lessdot y^\prime$ and $x^\prime \neq y$.  Then define $\leq_{\mathcal H}$ to be the transitive closure of $\lessdot_{\mathcal H}$, and define $\rk_{\mathcal H}(x, y) = \rk_P(x)$.
\end{defn}

\begin{eg}
\label{eg:3boolean}
We give an example of the poset $\mathcal H(B_3)$ in Figure \ref{fig:3boolean}. Observe that $\mathcal H(B_3)$ can be written as a disjoint union of three copies of $B_2$. This is a single case of the more general phenomenon proven in Proposition \ref{prop:computing_HBn}.
\end{eg}

\begin{figure}[h!]
\[\begin{tikzpicture}[scale = 1.5]
  \node (0) at (0,0) {$\emptyset$};
  \node (1) at (-1,1) {$\{1\}$};
  \node (2) at (0,1) {$\{2\}$};
  \node (3) at (1,1) {$\{3\}$};
  \node (4) at (-1,2) {$\{1,2\}$};
  \node (5) at (0,2) {$\{1,3\}$};
  \node (6) at (1,2) {$\{2,3\}$};
  \node (7) at (0,3) {$\{1,2,3\}$};
  \draw (0)--(1);
  \draw (0)--(2);
  \draw (0)--(3);
  \draw (1)--(4);
  \draw (1)--(5);
  \draw (2)--(4);
  \draw (2)--(6);
  \draw (3)--(5);
  \draw (3)--(6);
  \draw (4)--(7);
  \draw (5)--(7);
  \draw (6)--(7);
  \node (8) at (0,-.5) {$B_3$};
\end{tikzpicture} \]

\vspace*{0.3in}

\[\begin{tikzpicture}[scale = 1]
  \node (0) at (0,0) {$(\emptyset,\{1\})$};
  \node (1) at (4,0) {$(\emptyset,\{2\})$};
  \node (2) at (8,0) {$(\emptyset,\{3\})$};
  \node (3) at (3,1) {$(\{1\},\{1,2\})$};
  \node (4) at (7,1) {$(\{1\},\{1,3\})$};
  \node (5) at (-1,1) {$(\{2\},\{1,2\})$};
  \node (6) at (9,1) {$(\{2\},\{2,3\})$};
  \node (7) at (1,1) {$(\{3\},\{1,3\})$};
  \node (8) at (5,1) {$(\{3\},\{2,3\})$};
  \node (9) at (8,2) {$(\{1,2\},\{1,2,3\})$};
  \node (10) at (4,2) {$(\{1,3\},\{1,2,3\})$};
  \node (11) at (0,2) {$(\{2,3\},\{1,2,3\})$};
  \draw (0)--(5)--(11)--(7)--(0);
  \draw (1)--(3)--(10)--(8)--(1);
  \draw (2)--(4)--(9)--(6)--(2);
  \node (12) at (4,-1) {$\mathcal{H}(B_3)$};
\end{tikzpicture}\]
\caption{\label{fig:3boolean} $B_3$ and $\mathcal H (B_3)$}
\end{figure}
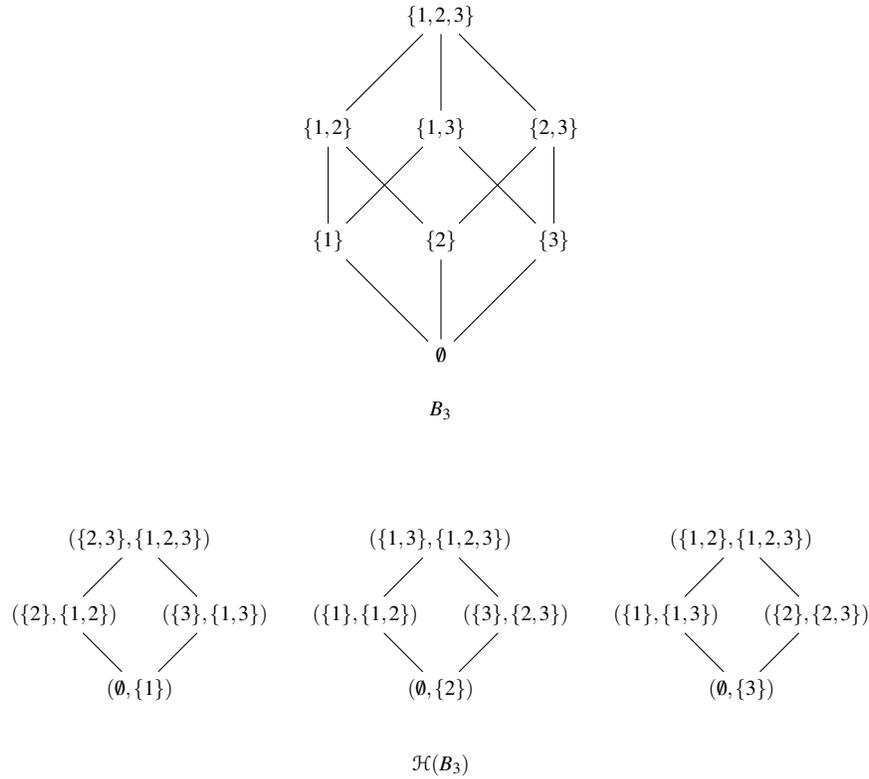

\begin{rmk}\label{rem:order_containment}
Note that by definition, $(x,y)\lessdot_{\mathcal{H}} (x^\prime,y^\prime)$ precisely when $(x,y)\lessdot_{\mathcal{E}} (x^\prime,y^\prime)$ and $x^\prime\neq y$, hence $(x, y)\lessdot_{\mathcal{H}} (x^\prime, y^\prime) \Rightarrow (x, y)\lessdot_{\mathcal E} (x^\prime, y^\prime)$.  In other words, $\mathcal{H}(P)$ has the same elements as $\mathcal{E}(P)$ but with a weaker partial order.
\end{rmk}

\begin{lem}\label{lem:HP_order}
For $P$ a graded poset, the object $\mathcal{H}(P)$, as defined in Definition ~\ref{defn:h_map}, is a graded poset.
\end{lem}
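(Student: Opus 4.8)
The plan is to verify directly that $\le_{\mathcal{H}}$ is a partial order and that $\rk_{\mathcal{H}}$ grades $\mathcal{H}(P)$, closely mirroring the arguments given for $\mathcal{E}(P)$ in Lemmas \ref{lem:f_partial_order} and \ref{lem:FP_graded_poset}. The one genuinely new ingredient is Remark \ref{rem:order_containment}, which records that the covering relation of $\mathcal{H}(P)$ refines that of $\mathcal{E}(P)$; this is exactly what lets me import antisymmetry for free rather than argue it from scratch.

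First I would check that $\le_{\mathcal{H}}$ is a partial order. Reflexivity holds just as for $\mathcal{E}(P)$, and transitivity is immediate since $\le_{\mathcal{H}}$ is defined as the transitive closure of $\lessdot_{\mathcal{H}}$. The substantive point is antisymmetry. Here I invoke Remark \ref{rem:order_containment}: every cover $(x,y)\lessdot_{\mathcal{H}}(x^\prime,y^\prime)$ is in particular a cover $(x,y)\lessdot_{\mathcal{E}}(x^\prime,y^\prime)$. Passing to transitive closures, this yields the implication $(x,y)\le_{\mathcal{H}}(x^\prime,y^\prime)\Rightarrow (x,y)\le_{\mathcal{E}}(x^\prime,y^\prime)$. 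Consequently, if both $(x,y)\le_{\mathcal{H}}(x^\prime,y^\prime)$ and $(x^\prime,y^\prime)\le_{\mathcal{H}}(x,y)$, then both inequalities also hold in $\le_{\mathcal{E}}$, and antisymmetry of $\le_{\mathcal{E}}$ (Lemma \ref{lem:f_partial_order}) forces $(x,y)=(x^\prime,y^\prime)$.

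Next I would establish the grading conditions for $\rk_{\mathcal{H}}(x,y)=\rk_P(x)$. For the covering condition, if $(x,y)\lessdot_{\mathcal{H}}(x^\prime,y^\prime)$ then by definition $x\lessdot_P x^\prime$, so $\rk_P(x^\prime)=\rk_P(x)+1$, whence $\rk_{\mathcal{H}}(x^\prime,y^\prime)=\rk_{\mathcal{H}}(x,y)+1$; this is the same one-line computation used in Lemma \ref{lem:FP_graded_poset}. For the strict-monotonicity condition, any relation $(x,y)<_{\mathcal{H}}(x^\prime,y^\prime)$ is witnessed by a finite chain of $\lessdot_{\mathcal{H}}$-covers, and applying the covering condition step by step along this chain shows the rank strictly increases, giving $\rk_{\mathcal{H}}(x,y)<\rk_{\mathcal{H}}(x^\prime,y^\prime)$.

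I do not anticipate a serious obstacle. The only place that requires a moment of care is antisymmetry: because $\lessdot_{\mathcal{H}}$ is strictly weaker than $\lessdot_{\mathcal{E}}$ (it omits covers with $x^\prime=y$), one should not simply reuse the $\mathcal{E}$ argument verbatim, but rather observe the containment $\lessdot_{\mathcal{H}}\,\subseteq\,\lessdot_{\mathcal{E}}$ and transfer the conclusion. The containment rules out any potential cycles a priori, so antisymmetry is inherited immediately from $\mathcal{E}(P)$.
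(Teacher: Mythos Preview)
Your proof is correct and follows the same approach as the paper, which dispatches the lemma in one line by citing Remark~\ref{rem:order_containment} and the fact that $\mathcal{E}(P)$ is graded. You have simply unpacked the details of that inference---inheriting antisymmetry via the containment $\lessdot_{\mathcal{H}}\subseteq\lessdot_{\mathcal{E}}$ and verifying the rank conditions directly---so the content is the same.
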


\begin{proof}
This follows immediately from Remark \ref{rem:order_containment} and the fact that $\mathcal E(P)$ is graded.
\end{proof}

\begin{rmk}
While $\mathcal E\colon\mathcal P \rightarrow \mathcal P$ is a functor, $\mathcal H$ is not a functor. In particular, it is not possible to define $\mathcal H(f)$ for $f$ a morphism. This is illustrated in Figure \ref{fig:h_morphism}. For example, suppose we took $f\colon P \rightarrow Q$ defined by $f(1) = a, f(2) = f(3) = b$, and $f(4) = c$. 
Then there is no possible morphism $\mathcal H(f)\colon\mathcal H(P)\rightarrow \mathcal H(Q)$ because there are no morphisms $\mathcal H(P) \rightarrow \mathcal H(Q)$ whatsoever. 

\begin{figure}[h!]
\begin{center}
\begin{tikzpicture}[scale=.7]
  \node (b) at (0:2cm) {$3$};
  \node (a) at (90:2cm) {$4$};
  \node (c) at (180:2cm) {$2$};
  \node (d) at (270:2cm) {$1$};
  \draw (a) -- (b) -- (d) -- (c)--(a);
  \node (e) at (270:3cm) {$P$};
\end{tikzpicture}\quad
\begin{tikzpicture}[scale=.7]
  \node (b) at (90:2cm) {$a$};
  \node (a) at (270:2cm) {$b$};
  \node (c) at (0:0cm) {$c$};
  \draw (a) -- (c) -- (b);
  \node (e) at (270:3cm) {$Q$};
\end{tikzpicture}\quad
\begin{tikzpicture}[scale=.7]
  \node (b) at (45:1.414cm) {$(3,4)$};
  \node (a) at (135:1.414cm) {$(2,4)$};
  \node (c) at (225:1.414cm) {$(1,2)$};
  \node (d) at (315:1.414cm) {$(1,3)$};
  \draw (a) -- (d) ;
  \draw (c) -- (b);
  \node (e) at (270:3cm) {$\mathcal{H}(P)$};
\end{tikzpicture} \quad
\begin{tikzpicture}[scale=.7]
  \node (b) at (90:1cm) {$(b,c)$};
  \node (a) at (270:1cm) {$(a,b)$};
  \node (e) at (270:3cm) {$\mathcal{H}(Q)$};
\end{tikzpicture}
\end{center}
\caption{\label{fig:h_morphism} A map of posets $f:P \rightarrow Q$ with no possible map $\mathcal H(f)$}
\end{figure}
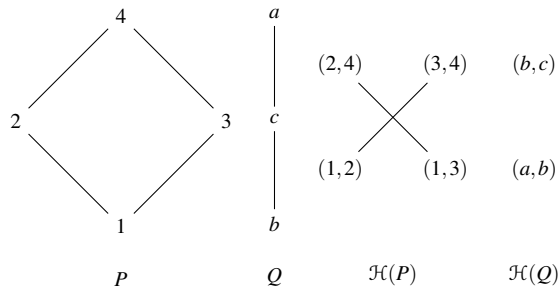
\end{rmk}

Given an action of a group $G$ on $P$, we define an action of $G$ on $\mathcal{H}(P)$ as we did for $\mathcal{E}(P)$ by again defining $g\cdot (x,y) = (gx,gy)$ for all $(x,y)\in P$.  We will then have a well-defined quotient poset $\mathcal{H}(P)/G$ with the same elements as $\mathcal{E}(P)/G$.

\begin{lem}
\label{lem:G_action_on_HP}
The automorphism defined by $g\cdot (x, y)= (gx, gy)$ for all $g\in G$, $(x, y)\in \mathcal{H}(P)$ yields a group action of $G$ on $\mathcal{H}(P)$.
\end{lem}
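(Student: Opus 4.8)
The plan is to verify directly that each $g \in G$ induces an automorphism of $\mathcal{H}(P)$ and that the assignment $g \mapsto (g \cdot {-})$ is a group homomorphism. Since $\mathcal{H}$ is not a functor (as observed above), we cannot simply invoke Lemma \ref{lem:Ff_poset_morphism} the way we did for $\mathcal{E}$ in Definition \ref{note:G_action_on_FP}; instead we check the relevant properties by hand, exploiting the fact that multiplication by $g$ is a \emph{bijective} automorphism of $P$.

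First I would check that $(x,y) \mapsto (gx, gy)$ maps $\mathcal{H}(P)$ to itself: if $x \lessdot_P y$ then $gx \lessdot_P gy$, since $g$ acts as an automorphism of $P$ and hence preserves covering relations, so $(gx, gy)$ is again a valid element of $\mathcal{H}(P)$. Next I would show the map is a morphism. Rank-preservation is immediate, since $\rk_{\mathcal H}(gx, gy) = \rk_P(gx) = \rk_P(x) = \rk_{\mathcal H}(x, y)$ using that $g$ is rank-preserving on $P$. For order-preservation it suffices, because $\leq_{\mathcal H}$ is the transitive closure of $\lessdot_{\mathcal H}$, to show that $g$ sends covers to covers: if $(x,y) \lessdot_{\mathcal H} (x', y')$, meaning $x \lessdot_P x'$, $y \lessdot_P y'$, and $x' \neq y$, then $gx \lessdot_P gx'$, $gy \lessdot_P gy'$, and $gx' \neq gy$, whence $(gx, gy) \lessdot_{\mathcal H} (gx', gy')$.

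The one point deserving attention---and precisely the reason $\mathcal{H}$ fails to be functorial for general morphisms---is the extra condition $x' \neq y$ appearing in $\lessdot_{\mathcal H}$. Under an arbitrary (non-injective) morphism this condition need not survive, since $x'$ and $y$ could be collapsed; but here $g$ is injective, being an automorphism of $P$, so $x' \neq y$ forces $gx' \neq gy$. This is the only place the argument genuinely uses that we act by automorphisms rather than by arbitrary morphisms, and I expect it to be the sole (and mild) obstacle. Since $g^{-1}$ induces a morphism by the same reasoning, and the maps induced by $g$ and $g^{-1}$ are mutually inverse on the underlying set, each $g$ induces an automorphism of $\mathcal{H}(P)$. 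Finally, the homomorphism property is a direct computation: $(gh) \cdot (x,y) = ((gh)x, (gh)y) = g \cdot (hx, hy) = g \cdot (h \cdot (x,y))$ and $e \cdot (x,y) = (x,y)$, so the assignment defines a well-defined group action of $G$ on $\mathcal{H}(P)$.
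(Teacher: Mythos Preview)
Your proof is correct and follows essentially the same approach as the paper: both arguments reduce to checking that $(x,y) \lessdot_{\mathcal H} (x',y')$ is preserved under $g$, with the key point being that injectivity of $g$ ensures the extra condition $x' \neq y$ survives. The paper's version is terser (it shows the ``if and only if'' for the covering relation in one stroke rather than separately arguing that $g$ and $g^{-1}$ give morphisms), but the content is the same.
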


\begin{proof}
Let $g\in G$.  Since $\le_{\mathcal{H}}$ is the transitive closure of $\lessdot_{\mathcal{H}}$ it suffices to show that for all $(x,y),(x^\prime,y^\prime)\in \mathcal{H}(P)$ we have $(x, y) \lessdot_{\mathcal H} (x^\prime,y^\prime) \iff g(x, y) \lessdot_{\mathcal H} g(x^\prime, y^\prime)$.  Since $g$ is an automorphism of $P$, we have $x\le_P x^\prime \iff gx\le_P gx^\prime$, $y\le_P y^\prime \iff gy\le_P gy^\prime$, and $y\neq x^\prime \iff gy\neq gx^\prime$, so the result follows from the definition of $\le_{\mathcal{H}}$.
\end{proof}

\begin{lem}
\label{lem:bijection_h_f}
The map 
\begin{align*}
	f\colon\mathcal H(P)/G &\rightarrow \mathcal E(P)/G \\
	G(x,y) &\mapsto G(x,y)
\end{align*}
 is a bijective morphism for any group action of $G$ on $P$.
\end{lem}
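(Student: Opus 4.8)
The plan is to exploit the fact that $\mathcal H(P)$ and $\mathcal E(P)$ are built on the very same underlying set and carry the very same $G$-action, so that $f$ is essentially the identity map. By Definition \ref{defn:h_map} together with Remark \ref{rem:order_containment}, the elements of $\mathcal H(P)$ and $\mathcal E(P)$ are identical---both are exactly the pairs $(x,y)$ with $x \lessdot_P y$---and by Lemma \ref{lem:G_action_on_HP} the action $g\cdot(x,y)=(gx,gy)$ is the same on both. Hence each orbit $G(x,y)$ is literally the same subset of pairs whether regarded inside $\mathcal H(P)$ or inside $\mathcal E(P)$, so $\mathcal H(P)/G$ and $\mathcal E(P)/G$ have identical underlying sets and $f$ is the identity map between them. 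In particular $f$ is automatically well-defined, injective, and surjective, and the entire content of the lemma reduces to checking that $f$ is a morphism of graded posets, i.e. that it is rank-preserving and order-preserving.

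Rank-preservation is immediate: $\rk_{\mathcal H}(x,y)=\rk_P(x)=\rk_{\mathcal E}(x,y)$ by Definition \ref{defn:h_map} and Definition \ref{defn:functor_of_edges}, and the rank of an orbit in either quotient is read off any representative, so $f$ preserves rank.

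For order-preservation I would first establish the containment $\le_{\mathcal H}\,\subseteq\,\le_{\mathcal E}$ on the common underlying set. This follows from Remark \ref{rem:order_containment}: every cover $(x,y)\lessdot_{\mathcal H}(x^\prime,y^\prime)$ is also a cover $(x,y)\lessdot_{\mathcal E}(x^\prime,y^\prime)$, and since $\le_{\mathcal H}$ and $\le_{\mathcal E}$ are the transitive closures of $\lessdot_{\mathcal H}$ and $\lessdot_{\mathcal E}$ respectively, the containment passes to the transitive closures. I would then push this containment down to the quotients using the definition of the quotient order: if $G(x,y)\le_{\mathcal H(P)/G}G(x^\prime,y^\prime)$, then, unwinding the quotient relation through a chain of orbits if necessary, there are representatives comparable under $\le_{\mathcal H}$; by the containment these same representatives are comparable under $\le_{\mathcal E}$, whence $G(x,y)\le_{\mathcal E(P)/G}G(x^\prime,y^\prime)$. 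This shows $f$ is order-preserving and completes the proof that $f$ is a bijective morphism.

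I do not expect a serious obstacle, since $f$ is genuinely the identity on underlying sets and a morphism is only required to preserve order in one direction (cf. Remark \ref{rem:bijective_morphism_not_isomorphism}). The one point needing care is the bookkeeping in the last step: I must ensure that the weaker order on $\mathcal H(P)$, once descended to the quotient, still lands inside the descended order on $\mathcal E(P)/G$, and in particular that the reverse inclusion is \emph{not} claimed---indeed $f$ will in general fail to be an isomorphism, exactly as the strictly weaker partial order on $\mathcal H(P)$ should lead one to expect.
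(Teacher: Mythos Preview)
Your proposal is correct and follows essentially the same approach as the paper: both observe that $\mathcal H(P)/G$ and $\mathcal E(P)/G$ share the same underlying set (so $f$ is trivially bijective and rank-preserving), and both deduce order-preservation from Remark~\ref{rem:order_containment}. Your write-up is simply more explicit about pushing the containment $\le_{\mathcal H}\subseteq\le_{\mathcal E}$ down to the quotient, which the paper compresses into the word ``immediate.''
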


\begin{proof}
The elements of $\mathcal H(P)/G$ and $\mathcal E(P)/G$ are the same by definition, so it suffices to show that $f$ is a morphism. Since $f$ is clearly rank-preserving, it suffices to show $f$ is order-preserving. This is immediate from Remark ~\ref{rem:order_containment}.
\end{proof}

The remaining step in the proof of Theorem \ref{thm:cover_transitive_implies_Peck} is to show that $\mathcal{H}(B_n)$ is unitary Peck, which we do by generalizing Example \ref{eg:3boolean} and showing that $\mathcal{H}(B_n)$ is isomorphic to a disjoint union of boolean algebras.

\begin{prop}\label{prop:computing_HBn}
The graded poset $\mathcal{H}(B_n)$ is isomorphic to $n$ disjoint copies of $B_{n-1}$.
\end{prop}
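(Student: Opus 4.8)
The plan is to exhibit an explicit isomorphism that decomposes $\mathcal{H}(B_n)$ according to the single element by which the two coordinates of an edge differ. For an element $(x,y) \in \mathcal{H}(B_n)$ we have $x \lessdot y$ in $B_n$, so there is a unique $i \in [n] \setminus x$ with $y = x \cup \{i\}$; I will call this the \emph{label} of $(x,y)$. First I would send $(x,y)$ to the pair consisting of its label $i$ together with the set $x$, regarded as an element of the boolean algebra $B_{n-1}$ on the ground set $[n] \setminus \{i\}$. Since $x$ can be any subset of $[n] \setminus \{i\}$ and $i$ any element of $[n]$, this is a bijection of underlying sets from $\mathcal{H}(B_n)$ onto the disjoint union $\bigsqcup_{i \in [n]} B_{n-1}$, and it is visibly rank-preserving because $\rk_{\mathcal{H}}(x,y) = \rk(x) = |x|$.

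The crux is to show that the covering relation $\lessdot_{\mathcal{H}}$ respects this decomposition and matches the boolean covers. Suppose $(x,y) \lessdot_{\mathcal{H}} (x',y')$ and write $y = x \cup \{i\}$. By definition $x \lessdot x'$ and $y \lessdot y'$, so $x' = x \cup \{k\}$ and $y' = y \cup \{\ell\} = x \cup \{i,\ell\}$ for some $k \notin x$ and $\ell \notin y$ (in particular $\ell \neq i$). Since $(x',y')$ is itself an edge we need $x' \subseteq y'$, which forces $k \in \{i,\ell\}$. The case $k = i$ gives $x' = x \cup \{i\} = y$, exactly the situation excluded by the extra condition $x' \neq y$ in the definition of $\lessdot_{\mathcal{H}}$; hence $k = \ell$. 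In this surviving case $x' = x \cup \{k\}$ and $y' = x \cup \{i,k\}$, so the label of $(x',y')$ is again $i$. Thus every $\lessdot_{\mathcal{H}}$-cover preserves the label, and on the fiber of label $i$ it is precisely a cover $x \lessdot x \cup \{k\}$ of $B_{n-1}$ on $[n] \setminus \{i\}$, with $k$ ranging over all of $([n]\setminus\{i\}) \setminus x$.

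It remains to package this into an isomorphism of posets. The computation above shows the label is constant along covers, hence constant along the transitive closure $\leq_{\mathcal{H}}$, so $\mathcal{H}(B_n)$ is the disjoint union of its $n$ label-fibers with no relations between distinct fibers. On each fiber the map identifies $\lessdot_{\mathcal{H}}$-covers with $B_{n-1}$-covers bijectively, and since both $\leq_{\mathcal{H}}$ and the order on $B_{n-1}$ are the transitive closures of their respective cover relations, the label-preserving bijection is an isomorphism of posets fiber by fiber. Assembling the fibers gives $\mathcal{H}(B_n) \cong \bigsqcup_{i \in [n]} B_{n-1}$, that is, $n$ disjoint copies of $B_{n-1}$.

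I expect the only real subtlety to be the cover computation in the second paragraph, specifically verifying that the side condition $x' \neq y$ deletes exactly the label-changing covers and nothing else, so that the surviving covers biject with those of $B_{n-1}$; once this is established, the rest is bookkeeping about transitive closures.
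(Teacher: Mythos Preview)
Your proof is correct and follows essentially the same approach as the paper: both decompose $\mathcal{H}(B_n)$ according to the ``label'' $y\setminus x$, verify that an $\mathcal{H}$-cover forces the added element to coincide in both coordinates (using the condition $x' \neq y$ to rule out the label-changing case), and conclude by matching covers on each fiber with those of $B_{n-1}$. Your write-up is in fact slightly more careful about the converse direction and the passage to transitive closures than the paper's version.
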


\begin{proof}
Let the $n$ disjoint copies of $B_{n-1}$ be labeled $B_{n-1}^{(i)}$, $1\le i\le n$, with the elements of $B_{n-1}^{(i)}$ labeled $x^{(i)}$, $x\subseteq \{1,\ldots,n-1\}$.  We will show that the map 

\begin{align*}
f\colon \mathcal{H}(B_n) &\longrightarrow \bigcup_{i=1}^n B_{n-1}^{(i)}\\
(x,x\cup{i})&\longmapsto x^{(i)}
\end{align*}

\noindent is an isomorphism.  Suppose we have $(x, y),(x', y') \in \mathcal H(B_n)$ with $(x, y) \lessdot_{\mathcal H} (x', y')$. Let $j\in [n]$ such that $y^\prime = y\cup\{j\}$, and let $i\in [n]$ such that $x^\prime = x\cup \{i\}$. If $i \ne j$, then $x' = y$, contradicting the assumption that $(x, y) \lessdot_{\mathcal H} (x', y')$. Thus $x^\prime = x\cup\{i\}$ and $y^\prime = y\cup\{i\}$ for some $i\in [n]$.

Conversely, we can easily check that if $i\not\in y$, then $(x, y)\lessdot_{\mathcal{H}} (x\cup\{i\}, y\cup\{i\})$.  It follows that for all subsets $w \subset [n]$ such that $|w| = 1$, there is an isomorphism 
\begin{align*}
\{(x, y)\colon y\setminus x = w\} & \rightarrow B_{n-1}\\
(x,y) &\mapsto (x\setminus w,y\setminus w). 
\end{align*}
Furthermore, if $y\setminus x \ne y^\prime \setminus x^\prime$, then $(x, y)$ and $(x^\prime, y^\prime)$ are incomparable, so these subposets indexed by $w$ are pairwise disjoint, and $\mathcal H(B_n)$ is isomorphic to $n$ copies of $B_{n}$.
\end{proof}

\begin{cor}\label{cor:HBn_unitary_peck}
The graded poset 
$\mathcal H(B_n)$ is unitary Peck for all $n\ge 0$.
\end{cor}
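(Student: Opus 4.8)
The plan is to deduce the corollary immediately from Proposition \ref{prop:computing_HBn} together with two ingredients: the classical fact that the boolean algebra is unitary Peck, and the elementary observation that a disjoint union of unitary Peck posets of a common rank is again unitary Peck. Since an isomorphism of graded posets is rank-preserving and carries the Lefschetz map of one poset to the Lefschetz map of the other, unitary Peckness is preserved under isomorphism; so by Proposition \ref{prop:computing_HBn} it suffices to show that $\bigsqcup_{i=1}^n B_{n-1}^{(i)}$ is unitary Peck.

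First I would invoke the standard fact (already used in Section \ref{sec:introduction}) that $B_{n-1}$ is unitary Peck, i.e.\ that its Lefschetz map $U$, with $U(x)=\sum_{y\gtrdot x}y$, realizes the isomorphisms $U^{N-2i}\colon V((B_{n-1})_i)\to V((B_{n-1})_{N-i})$ for all $0\le i<N/2$, where $N=n-1$. The key step is then purely formal. Writing $Q=\bigsqcup_{i=1}^n B_{n-1}^{(i)}$, note that every copy $B_{n-1}^{(i)}$ has the same rank $N=n-1$, so $Q$ is a graded poset of rank $N$ with $V(Q_i)=\bigoplus_{i}V((B_{n-1}^{(i)})_i)$. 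Because no covering relation of a disjoint union connects distinct summands, the Lefschetz map of $Q$ is the direct sum of the Lefschetz maps of the summands, and hence $U_Q^{N-2i}=\bigoplus_i (U^{(i)})^{N-2i}$ as a map $V(Q_i)\to V(Q_{N-i})$. Each block $(U^{(i)})^{N-2i}$ is an isomorphism since $B_{n-1}$ is unitary Peck, and a direct sum of isomorphisms is an isomorphism, so $U_Q^{N-2i}$ is an isomorphism for all $0\le i<N/2$. By Lemma \ref{lem:Peck_poset_characterization} applied to the Lefschetz map, $Q$ is unitary Peck, and therefore so is $\mathcal H(B_n)$. The cases $n=0,1$ are vacuous, as the range $0\le i<N/2$ is then empty.

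I do not expect a serious obstacle here, as this is a direct corollary; the only point requiring care is that all $n$ copies of $B_{n-1}$ share the common rank $N=n-1$, so that the single exponent $N-2i$ simultaneously produces a Lefschetz isomorphism on each summand. This common-rank condition is exactly what makes the block-diagonal operator $U_Q^{N-2i}$ an honest isomorphism between $V(Q_i)$ and $V(Q_{N-i})$, rather than a sum of maps between ranks that fail to match up, and it is what prevents the argument from going through for arbitrary disjoint unions of unitary Peck posets of differing ranks.
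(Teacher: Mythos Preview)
Your proof is correct and takes essentially the same approach as the paper: both invoke Proposition~\ref{prop:computing_HBn} and the fact that $B_{n-1}$ is unitary Peck. You simply spell out the (implicit) step that a disjoint union of equal-rank unitary Peck posets is unitary Peck via the block-diagonal Lefschetz map, whereas the paper leaves this immediate.
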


\begin{proof}
This follows immediately from Proposition \ref{prop:computing_HBn} and the fact that $B_{n-1}$ is unitary Peck.  Indeed, $B_n$ is shown to be unitary Peck in \cite[Theorem 2a]{quotients_stanley} by noting that $B_k = (B_1)^k$ and that $B_1$ is clearly unitary Peck.
Note that $\mathcal H(B_0)$ is the empty poset, so it is vacuously unitary Peck.
\end{proof}

\begin{cor}\label{cor:quotients_of_HBn_peck}
The graded poset $\mathcal H(B_n)/G$ is Peck for any subgroup $G\subset \Aut(B_n)$.
\end{cor}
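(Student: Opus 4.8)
The plan is to assemble three results already in hand. First, Corollary \ref{cor:HBn_unitary_peck} tells us that $\mathcal H(B_n)$ is unitary Peck for every $n \ge 0$. Second, Lemma \ref{lem:G_action_on_HP} shows that any action of $G$ on $B_n$ induces a well-defined action of $G$ on $\mathcal H(B_n)$ via $g\cdot(x,y) = (gx,gy)$. Having these two facts, I would simply apply Theorem \ref{thm:quotients_of_unitary_peck_posets} with $P = \mathcal H(B_n)$: since $\mathcal H(B_n)$ is unitary Peck and $G$ acts on it, the quotient $\mathcal H(B_n)/G$ is Peck.

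The one point requiring care is that Theorem \ref{thm:quotients_of_unitary_peck_posets} is phrased for a subgroup $G \subseteq \Aut(P)$, i.e.\ for a faithful action, whereas a priori the induced homomorphism $G \to \Aut(\mathcal H(B_n))$ might have a kernel. I would dispose of this in one of two ways. The cleanest is to note that the quotient poset $\mathcal H(B_n)/G$ depends only on the image of $G$ in $\Aut(\mathcal H(B_n))$, so I may replace $G$ by that image and apply the theorem to the resulting genuine subgroup. Alternatively, one can check directly that the induced action is faithful: by Remark \ref{rem:induced_action_bn} an automorphism of $B_n$ is determined by its action on the atoms $[n]$, and this action is recorded by how $g$ permutes the rank-$0$ elements $(\emptyset,\{i\}) \in \mathcal H(B_n)$, so a $g$ acting trivially on $\mathcal H(B_n)$ must already be the identity on $B_n$.

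There is no real obstacle here; the work has all been done in the preceding lemmas and corollary, and this statement is essentially a bookkeeping corollary recording that the hypotheses of Stanley's quotient theorem are met. If I were to flag a single step, it would be the verification that the induced action lands in (or can be corrected to land in) $\Aut(\mathcal H(B_n))$ as required by Theorem \ref{thm:quotients_of_unitary_peck_posets}, namely the faithfulness remark above.
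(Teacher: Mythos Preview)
Your proposal is correct and follows exactly the same approach as the paper, which simply writes ``This follows from Corollary \ref{cor:HBn_unitary_peck} and Theorem \ref{thm:quotients_of_unitary_peck_posets}.'' Your additional care about the induced action (Lemma \ref{lem:G_action_on_HP}) and the faithfulness issue is more than the paper bothers to spell out, but it is all sound and fills in details the paper leaves implicit.
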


\begin{proof}
This follows from Corollary \ref{cor:HBn_unitary_peck} and Theorem \ref{thm:quotients_of_unitary_peck_posets}.
\end{proof}

The next corollary will not be particularly relevant in proving Theorem \ref{thm:cover_transitive_implies_Peck}, but we note it as an aside.

\begin{cor}
Both $\mathcal{E}(B_n)$ and $\mathcal H(B_n)$ have symmetric chain decompositions (SCD).
\end{cor}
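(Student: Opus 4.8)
The plan is to deduce both statements from the structural result of Proposition \ref{prop:computing_HBn} together with the classical theorem of de Bruijn, Tengbergen, and Kruyswijk that every boolean algebra $B_m$ admits a symmetric chain decomposition. I will first establish the SCD for $\mathcal{H}(B_n)$ and then transport it to $\mathcal{E}(B_n)$, since $\mathcal{E}(B_n)$ has no comparably clean decomposition of its own.

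First, I would recall that $B_m$ has an SCD, and observe that a disjoint union of finitely many graded posets of the \emph{same} rank, each carrying an SCD, again carries an SCD: one simply takes the union of all the chains appearing in the individual decompositions. Since Proposition \ref{prop:computing_HBn} identifies $\mathcal{H}(B_n)$ with $\bigcup_{i=1}^n B_{n-1}^{(i)}$ via a rank-preserving isomorphism, each copy $B_{n-1}^{(i)}$ has rank $n-1 = \rk(\mathcal{H}(B_n))$, so the symmetric chains of the $n$ copies of $B_{n-1}$ assemble into symmetric chains of $\mathcal{H}(B_n)$. This yields an SCD of $\mathcal{H}(B_n)$ in which each chain runs between complementary ranks $i$ and $(n-1)-i$.

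To obtain the SCD for $\mathcal{E}(B_n)$, I would exploit that $\mathcal{H}(B_n)$ and $\mathcal{E}(B_n)$ have the same underlying set and the same rank function, and that by Remark \ref{rem:order_containment} every covering relation $\lessdot_{\mathcal{H}}$ is also a covering relation $\lessdot_{\mathcal{E}}$. Consequently, each saturated chain $z_s \lessdot_{\mathcal{H}} z_{s+1} \lessdot_{\mathcal{H}} \cdots \lessdot_{\mathcal{H}} z_t$ appearing in the SCD of $\mathcal{H}(B_n)$ is also a saturated chain $z_s \lessdot_{\mathcal{E}} z_{s+1} \lessdot_{\mathcal{E}} \cdots \lessdot_{\mathcal{E}} z_t$ in $\mathcal{E}(B_n)$: saturation is automatic because consecutive ranks differ by exactly $1$, and the chain remains symmetric in $\mathcal{E}(B_n)$ because the ranks of its endpoints are unchanged. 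As these chains already partition the common element set, the SCD of $\mathcal{H}(B_n)$ is verbatim an SCD of $\mathcal{E}(B_n)$.

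The argument is essentially bookkeeping, so there is no serious obstacle; the one point requiring care is checking that the notion of ``symmetric chain'' genuinely transfers between the two posets---namely, confirming that $\mathcal{H}(B_n)$ and $\mathcal{E}(B_n)$ share the same rank $n-1$ and the same rank function, so that a chain symmetric about $(n-1)/2$ in $\mathcal{H}(B_n)$ remains symmetric in $\mathcal{E}(B_n)$. Everything else follows from Proposition \ref{prop:computing_HBn}, the containment of covering relations in Remark \ref{rem:order_containment}, and the standard SCD of the boolean algebra.
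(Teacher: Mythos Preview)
Your proposal is correct and follows essentially the same route as the paper: both first obtain the SCD of $\mathcal{H}(B_n)$ from Proposition~\ref{prop:computing_HBn} and the classical SCD of $B_{n-1}$, and then push it forward to $\mathcal{E}(B_n)$ via the identity-on-elements map. The only cosmetic difference is that the paper phrases the transfer step as ``a bijective morphism $\mathcal{H}(B_n)\to\mathcal{E}(B_n)$ takes an SCD to an SCD'' (invoking Lemma~\ref{lem:bijection_h_f} with trivial $G$), whereas you unpack this directly from Remark~\ref{rem:order_containment}; the content is the same.
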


\begin{proof}
The graded poset
$\mathcal{H}(B_n)$ has an SCD by Proposition ~\ref{prop:computing_HBn} and the fact that $B_{n-1}$ has an SCD, as shown in \cite{greene}.  By Lemma ~\ref{lem:bijection_h_f} there is a bijective morphism $f\colon\mathcal{H}(B_n)\rightarrow\mathcal{E}(B_n)$, and since a bijective morphism takes an SCD to an SCD it follows that $\mathcal{E}(B_n)$ has an SCD.
\end{proof}

\begin{cor}
\label{cor:quotiented_edge_peck}
The graded poset $\mathcal E(B_n)/G$ is Peck for any subgroup $G\subset \Aut(B_n)$.
\end{cor}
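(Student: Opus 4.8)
The plan is to assemble this corollary directly from the machinery built up in this subsection; essentially all the work has already been done, and what remains is to chain three results together in the correct order. The key conceptual point is that Peckness transfers across bijective morphisms even when they fail to be isomorphisms, which is exactly the content of Lemma \ref{lem:bijection_peck_implication}. This is what lets us certify $\mathcal E(B_n)/G$ as Peck without ever analyzing its order structure directly, and instead routing through the more tractable poset $\mathcal H(B_n)/G$.

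Concretely, I would proceed as follows. First, invoke Corollary \ref{cor:quotients_of_HBn_peck} to conclude that $\mathcal H(B_n)/G$ is Peck for the given subgroup $G \subset \Aut(B_n)$; recall that this corollary itself rests on the structural computation of Proposition \ref{prop:computing_HBn} (identifying $\mathcal H(B_n)$ with $n$ disjoint copies of $B_{n-1}$), which forces $\mathcal H(B_n)$ to be unitary Peck (Corollary \ref{cor:HBn_unitary_peck}) and hence, by Stanley's quotient theorem \ref{thm:quotients_of_unitary_peck_posets}, yields the Peckness of the quotient. Second, apply Lemma \ref{lem:bijection_h_f} to obtain a bijective morphism $f\colon \mathcal H(B_n)/G \to \mathcal E(B_n)/G$; these two posets share the same underlying set, and $f$ is order-preserving precisely because $\lessdot_{\mathcal H}$ refines $\lessdot_{\mathcal E}$ as recorded in Remark \ref{rem:order_containment}.

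Finally, I would feed this bijective morphism into Lemma \ref{lem:bijection_peck_implication}: since the source $\mathcal H(B_n)/G$ is Peck and $f$ is a bijective morphism onto $\mathcal E(B_n)/G$, the target inherits an order-raising operator (namely $f \circ U \circ f^{-1}$, for $U$ the operator witnessing Peckness of the source) whose appropriate powers remain isomorphisms, so $\mathcal E(B_n)/G$ is Peck. There is no genuine obstacle at this stage; the proof is a two- or three-line deduction. The only thing to be careful about is confirming that the hypotheses of Lemma \ref{lem:bijection_peck_implication} really are met here — that $f$ is a morphism of \emph{graded} posets of the same rank and is a set-theoretic bijection — both of which are guaranteed by Lemma \ref{lem:bijection_h_f}. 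The substantive difficulty of the whole argument lives entirely upstream, in Proposition \ref{prop:computing_HBn}, where the weakened relation $\lessdot_{\mathcal H}$ was engineered precisely so that $\mathcal H(B_n)$ splits cleanly into boolean algebras; this corollary is simply the payoff.
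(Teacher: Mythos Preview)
Your proposal is correct and follows exactly the same three-step chain as the paper: $\mathcal H(B_n)/G$ is Peck by Corollary~\ref{cor:quotients_of_HBn_peck}, there is a bijective morphism $\mathcal H(B_n)/G \to \mathcal E(B_n)/G$ by Lemma~\ref{lem:bijection_h_f}, and then Lemma~\ref{lem:bijection_peck_implication} transfers Peckness across. The surrounding commentary you provide is accurate context, but the actual argument is identical to the paper's.
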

\begin{proof}
By Corollary ~\ref{cor:quotients_of_HBn_peck}, $\mathcal H(B_n)/G$ is Peck. By Lemma ~\ref{lem:bijection_h_f}, the map 
\begin{align*}
f\colon\mathcal H(B_n)/G &\rightarrow \mathcal E(B_n)/G \\
G(x, y) &\mapsto G(x, y)
\end{align*}
 is a bijective morphism. Then, by Lemma ~\ref{lem:bijection_peck_implication}, it follows that $\mathcal E(B_n)/G$ is Peck.
\end{proof}

We now deduce Theorem \ref{thm:cover_transitive_implies_Peck}.

\begin{proof}[Proof of Theorem \ref{thm:cover_transitive_implies_Peck}]
By Corollary \ref{cor:quotiented_edge_peck}, $\mathcal{E}(B_n)/G$ is Peck for any group action of $G$ on $B_n$. Since the $G$-action is CCT, there is a bijective morphism from $\mathcal{E}(B_n)/G$ to $\mathcal{E}(B_n/G)$ by Lemma \ref{lem:cover_transitive_equivalence}. Hence $\mathcal{E}(B_n/G)$ is Peck by Lemma \ref{lem:bijection_peck_implication}.
\end{proof}

Note that we have also developed several generalizations of $\mathcal E$, for which many similar results hold. For more information, see Subsection 3.3 of the REU report.


\section{Common Cover Transitive Actions}
\label{sec:cover_transitive}
In this section, we develop the theory of CCT actions $\phi$ where $G$ is a group, $P$ is a poset, and $\phi:G\times P \rightarrow P$ is an action. Recall Definition ~\ref{defn:cover_transitive}, that $\phi$ is CCT if whenever $x,y,z \in P$ such that $x\lessdot z,y\lessdot z$, and $x \in Gy$, then there exists $g \in \Stab(z)$ with $gx = y$.  We show that the CCT property is closed under semidirect products, in the appropriate sense. From Proposition ~\ref{prop:cover_transitive_building_blocks}, which will be proven in Subsection \ref{sssec:dihedral}, the action of $S_n$ on $B_n$ and the action of certain dihedral groups are CCT. We can then use these as building blocks to construct other CCT groups. In particular, we shall show in this section that automorphism groups of rooted trees are CCT.

\begin{eg}
\label{eg:trivial_edgequot}
Two rather trivial examples of CCT actions are $\phi\colon S_n\times B_n \rightarrow B_n$ and $\psi\colon G\times B_n\rightarrow B_n$ where $G$ is arbitrary, $\phi$ is the action induced by $S_n$ permuting the elements of $[n]$, and $\psi$ is the trivial action. In the former case, $\mathcal E(B_n/S_n)$ is simply a chain with $n$ points, and so is $\mathcal E(B_n)/S_n$, since all $(x, y)$ are identified under the $S_n$ action. In the latter case, since $G$ acts trivially by $\phi$ we have that $\mathcal E(B_n/G) \cong \mathcal E(B_n)$ and $\mathcal E(B_n)/G \cong \mathcal E(B_n)$. So again, $\psi$ is CCT.
\end{eg}

\ssec{Preservation Under Semidirect Products}
\label{ssec:semidirect_product_preservation}

\begin{lem}
Let $G\subseteq\Aut(P)$, $H\triangleleft G$, and $K\subset G$ such that $G = H\rtimes K$. We then have a well-defined group action 
\begin{align*}
K \times P/H &\rightarrow P/H \\
(k,Hx) &\mapsto H(k \cdot x).
\end{align*}

\end{lem}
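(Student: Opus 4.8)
The plan is to verify in turn that the proposed formula is independent of the chosen orbit representative, that it satisfies the two axioms of a group action, and that each element of $K$ acts by a poset automorphism of $P/H$. The only real subtlety lies in the first point, where the hypothesis $H \triangleleft G$ is genuinely needed; the remaining verifications are routine.

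First I would check well-definedness. Suppose $Hx = Hx'$, so that $x' = h\cdot x$ for some $h \in H$. For $k \in K \subseteq G$, normality of $H$ gives $khk^{-1} \in H$, so writing $h' := khk^{-1}$ we have
$$k\cdot x' = (kh)\cdot x = (h'k)\cdot x = h'\cdot(k\cdot x),$$
whence $H(k\cdot x') = H(k\cdot x)$. Thus $(k, Hx) \mapsto H(k\cdot x)$ does not depend on the representative $x$. This is the step I expect to be the main obstacle, since it is precisely where the semidirect-product structure (specifically normality of $H$) is essential.

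Next I would verify the action axioms. The identity $e$ of $K$ satisfies $e\cdot Hx = H(e\cdot x) = Hx$. For $k_1, k_2 \in K$, since $K$ is a subgroup the product $k_1 k_2$ computed in $K$ agrees with the product in $G$, so
$$(k_1 k_2)\cdot Hx = H\big((k_1 k_2)\cdot x\big) = H\big(k_1\cdot(k_2\cdot x)\big) = k_1\cdot\big(k_2\cdot Hx\big),$$
using the well-definedness established above for the final equality. This yields a homomorphism from $K$ to the group of bijections of $P/H$.

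Finally, to confirm the image lands in $\Aut(P/H)$ I would check that each $k$ is rank- and order-preserving. Rank is immediate: $\rk_{P/H}(k\cdot Hx) = \rk_P(k\cdot x) = \rk_P(x) = \rk_{P/H}(Hx)$, since $k$ preserves rank on $P$. For order, suppose $Hx \le_{P/H} Hy$; by the definition of the quotient poset there are $x_0 \in Hx$ and $y_0 \in Hy$ with $x_0 \le_P y_0$, hence $k\cdot x_0 \le_P k\cdot y_0$. The well-definedness computation shows $k\cdot x_0 \in H(k\cdot x)$ and $k\cdot y_0 \in H(k\cdot y)$, giving $H(k\cdot x) \le_{P/H} H(k\cdot y)$. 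Applying the same argument to $k^{-1} \in K$ shows the inverse map is order-preserving as well, so each $k$ acts as an automorphism of $P/H$, completing the verification that we have a well-defined group action.
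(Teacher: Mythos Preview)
Your proof is correct and follows the same approach as the paper: the key step---using normality of $H$ to write $khk^{-1}=h'\in H$ and conclude $k\cdot x'=h'\cdot(k\cdot x)$---is identical. The paper's own proof stops after this well-definedness check, whereas you additionally verify the action axioms and that each $k$ acts as a poset automorphism; this extra care is warranted given the paper's convention that a ``group action on $P$'' means an action by rank- and order-preserving bijections, but it does not represent a different method.
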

\begin{proof}

Note that if $x,x^\prime\in Hx,$ we have $x^\prime = h\cdot x$ for some $h\in H$. Since $H$ is normal in $G,$ we have that for all $k \in G$ there exists $h^\prime \in H$ so that $khk^{-1} = h^\prime$. So 

$$k\cdot x^\prime = kh\cdot x = k(k^{-1}h^\prime k)\cdot x = h^\prime\cdot (k\cdot x)$$

Hence $k\cdot x$ and $k\cdot x^\prime$ are in the same $H$-orbit, so we have a well-defined group action of $K$ on $P/H$ defined by $k\cdot Hx = H(k\cdot x)$.
\end{proof}

Recall Proposition ~\ref{prop:semidirect_product_preservation}, which says that the CCT property is preserved under semidirect products.  We will use Proposition ~\ref{prop:semidirect_product_preservation} to construct more examples of CCT group actions, in particular using it to give a simple proof that CCT actions are preserved under direct products and wreath products.

\semidirect*
\begin{proof}
Since $G = H\rtimes K$, every element $g\in G$ can be written uniquely as a product $hk$ for some $h\in H$, $k\in K$.  Let $x,y,z\in P$ be such that $x\lessdot z$, $y\lessdot z$, and such that there exists some $h_0k_0\in G$ with $h_0k_0\cdot x = y$.  It suffices to show that there exists some $g\in \Stab_G(z)$ such that $g\cdot x = y$.

The orbits $Hx, Hy, Hz\in P/H$ satisfy $Hx\lessdot Hz$, $Hy\lessdot Hz$ such that $k_0\cdot Hx = Hy$. Thus, since the action of $K$ on $P/H$ is CCT, there exists some $k_1\in K$ such that $k_1\in \Stab_K(Hz)$ and $k_1\cdot Hx = Hy$.  It follows that there exists some $h_1\in H$ such that $h_1k_1h_0\in \Stab_G(z)$ and $h_1k_1h_0\cdot x\in Hy$.

Write $x^\prime = h_1k_1h_0\cdot x$.  Since the group action of $G$ must be order-preserving by definition, we have that $x^\prime \lessdot z$.  We already had that $y\lessdot z$ and $x^\prime\in Hy$, hence there exists some $h_2\in \Stab_H(z)$ such that $h_2\cdot x^\prime = y$ by the fact that the action of $H$ on $P$ is CCT.  Then we have that $h_2h_1k_1h_0\cdot x = h_2\cdot x^\prime = y$ and $h_2h_1k_1h_0\cdot z = h_2\cdot z = z$, as desired.
\end{proof}

\begin{prop}
\label{prop:direct_product_preservation}
If $\phi\colon G\times P\rightarrow P$ and $\psi\colon H \times Q \rightarrow Q$ are two CCT actions, then the direct product action 
\begin{align*}	
\phi \times \psi\colon(G\times H)\times (P\times Q) &\rightarrow P\times Q \\
(g,h)\cdot (x,y) &\mapsto (gx,hy)
\end{align*}
is also CCT.
\end{prop}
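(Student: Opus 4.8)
The plan is to deduce this from Proposition~\ref{prop:semidirect_product_preservation} by viewing the direct product as a (trivial) semidirect product. Write $G\times H = (G\times\{e_H\})\rtimes(\{e_G\}\times H)$ and set $N = G\times\{e_H\}\triangleleft G\times H$ together with the complement $K = \{e_G\}\times H$. By Proposition~\ref{prop:semidirect_product_preservation} it then suffices to check two things: (i) the action of $N\cong G$ on $P\times Q$ is CCT, and (ii) the induced action of $K\cong H$ on $(P\times Q)/N$ is CCT.

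First I would identify the quotient in (ii). Since the $N$-orbit of $(x,y)$ is $Gx\times\{y\}$, there is a poset isomorphism $(P\times Q)/N\cong (P/G)\times Q$, and under it the descended $K$-action becomes $h\cdot(Gx,y) = (Gx,hy)$. Thus in (i) the group $G$ acts on $P\times Q$ by $g\cdot(x,y)=(gx,y)$, i.e. by the CCT action $\phi$ on the first factor and trivially on the second; and in (ii) the group $H$ acts on $(P/G)\times Q$ by $\psi$ on the second factor and trivially on the first. So both (i) and (ii) are instances of one statement: the product of a CCT action with a trivial action is CCT.

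The crux is this last statement, and the key observation is the shape of covers in a product of graded posets: $(a,b)\lessdot(c,d)$ holds exactly when either $a\lessdot c$ and $b=d$, or $a=c$ and $b\lessdot d$, so a cover changes exactly one coordinate. Suppose a group $\Gamma$ acts by a CCT action on $P'$ and trivially on $Q'$, and suppose $(a,b)\lessdot(c,d)$, $(a',b')\lessdot(c,d)$ with $(a',b')\in\Gamma\cdot(a,b)$; since the trivial factor is untouched this means $a'\in\Gamma a$ and $b'=b$. Because the action is rank-preserving we have $\rk(a')=\rk(a)$ and $\rk(b')=\rk(b)$, and comparing these with the ranks of $c$ and $d$ forces both covers to change the \emph{same} coordinate. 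If they change the $P'$-coordinate, then $b=d=b'$, while $a\lessdot c$, $a'\lessdot c$, and $a'\in\Gamma a$, so CCT of the action on $P'$ produces $g\in\Stab(c)$ with $ga=a'$; then $g$ fixes $(c,d)$ and carries $(a,b)$ to $(a',b')$. If they change the $Q'$-coordinate, then $a=c=a'$, so $g=e$ already works. By symmetry the same argument applies when the CCT factor is the second one, which settles both (i) and (ii).

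I expect the main obstacle to be exactly this cover analysis: one must use that the rank function is additive on a product so that lying in a common $\Gamma$-orbit forces the two covers below $(c,d)$ to move the same coordinate, at which point the problem collapses to a single-factor application of the CCT hypothesis. (The same case analysis also yields a direct proof that bypasses Proposition~\ref{prop:semidirect_product_preservation}, but routing through the semidirect-product result is cleaner and reuses established work.)
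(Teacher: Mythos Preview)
Your proof is correct and follows essentially the same route as the paper's: both decompose $G\times H$ as $(G\times\{e\})\rtimes(\{e\}\times H)$ and invoke Proposition~\ref{prop:semidirect_product_preservation}, reducing to the claim that the product of a CCT action with a trivial action is CCT. The paper asserts this last step as ``easily checked,'' whereas you supply the explicit cover analysis; your argument is thus more complete in detail but identical in strategy.
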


\begin{proof}
First note that if either $G$ or $H$ acts trivially, then it can be easily checked that the action of $G\times H$ is CCT.  Next, observe that $G \times H$ can be viewed as the semidirect product $(G\times \{e\}) \rtimes (\{e\} \times H)$. Since the action of $G$ on $P$ is CCT, the action of $G\times \{e\}$ on $P \times Q$ is CCT. Also, since the action of $H$ on $Q$ is CCT, it follows that the action of $\{e\}\times H$ on $P \times Q/(G \times \left\{ e \right\})$ is CCT. Therefore, the action of $(G\times \{e\}) \rtimes (\{e\} \times H)$ satisfies the conditions of Proposition ~\ref{prop:semidirect_product_preservation}, hence the action of $G\times H$ on $P \times Q$ is CCT.
\end{proof}

Next, we use Proposition ~\ref{prop:semidirect_product_preservation} to prove in Proposition ~\ref{prop:wreath_preservation} that the CCT property is preserved under wreath products with the symmetric group. First, we need the definition of the wreath product.

\begin{defn}
For $G$ and $H$ groups with $H \subset S_\ell$, the {\it wreath product}, denoted by $G \wr H$, is the group whose elements are pairs $(g,h) \in G^\ell\times H$ with multiplication defined by
\begin{align*}
((g_1',\ldots, g_\ell'),h') \cdot ((g_1,\ldots, g_\ell) ,h) =((g'_{h'(1)}g_1,\ldots, g'_{h'(\ell)}g_\ell),hh')
\end{align*}
where $H$ acts on $[\ell]$ via the embedding of $H$ into $S_\ell$.
\end{defn}

In other words, $G\wr H$ can be viewed as a certain semidirect product $G^\ell \rtimes H$.

\begin{defn}
\label{defn:wreath_action}
For any group $G$ with a given action $\psi\colon G\times P \rightarrow P$, we obtain an induced action $\phi\colon G \wr H \times P^\ell \rightarrow P^\ell$ defined by 
$$((g_1,\ldots, g_\ell),h)(a_1,\ldots, a_\ell) = (g_{h^{-1}(1)}\cdot a_{h^{-1}(1)},\ldots,g_{h^{-1}(\ell)} \cdot a_{h^{-1}(\ell)}).$$
\end{defn}

\begin{rmk}
Heuristically, one may think of the above action as obtained by first having $G$ act separately on the $\ell$ distinct copies of $P$, and then letting $H$ act by permuting the copies.
\end{rmk}

\begin{lem}
\label{lem:symmetric_group_product_action}
For $P$ a graded poset, the action 
\begin{align*}
\phi\colon S_\ell \times P^\ell &\rightarrow P^\ell\\
(\sigma, (x_1,\ldots, x_\ell)) &\mapsto (x_{\sigma(1)},\ldots, x_{\sigma(\ell)})
\end{align*}
is CCT.
\end{lem}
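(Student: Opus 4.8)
The plan is to verify the CCT condition of Definition~\ref{defn:cover_transitive} directly, using the explicit form of the covering relations in the product poset $P^l$. I would first record that $P^l$ carries the product order with rank function $\rk(a_1,\dots,a_l)=\sum_{m}\rk(a_m)$, and that a cover $a\lessdot b$ in $P^l$ occurs exactly when $a$ and $b$ agree in all coordinates but one, say coordinate $i$, where $a_i\lessdot_P b_i$. This is because $\rk(b)-\rk(a)=1$ together with $a\le b$ forces every coordinate rank to be unchanged except one, which increases by $1$; and a rank-$1$ strict increase in a single coordinate of a graded poset is a cover.

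Now suppose $x,y,z\in P^l$ satisfy $x\lessdot z$, $y\lessdot z$, and $y\in S_l\cdot x$. Using the previous paragraph, let $i$ be the unique coordinate with $x_i\lessdot z_i$ and $x_m=z_m$ for $m\ne i$, and let $j$ be the unique coordinate with $y_j\lessdot z_j$ and $y_m=z_m$ for $m\ne j$. The hypothesis $y\in S_l\cdot x$ says that the tuples $x$ and $y$ are coordinate permutations of one another, so the multisets of their entries coincide. The entries of $x$ are those of $z$ with the single entry $z_i$ replaced by $x_i$, and similarly the entries of $y$ are those of $z$ with $z_j$ replaced by $y_j$. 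Cancelling the entries common to both (those in positions other than $i$ and $j$) yields the two-element multiset identity $\{x_i,z_j\}=\{y_j,z_i\}$.

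The crux of the argument is extracting the right conclusion from this identity, and I expect this to be the only real step. Since $x_i\lessdot z_i$ and $y_j\lessdot z_j$ we have $x_i\ne z_i$ and $y_j\ne z_j$; this excludes the pairing $x_i=z_i$, $y_j=z_j$, and therefore forces $x_i=y_j$ together with $z_i=z_j$ (when $i=j$ the identity reads $\{x_i,z_i\}=\{y_i,z_i\}$ and gives the same conclusion trivially).

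Finally I would produce the required stabilizer element. Let $\sigma$ be the transposition $(i\ j)$, interpreted as the identity when $i=j$. Since $z_i=z_j$, the permutation $\sigma$ only exchanges coordinates of $z$ having equal value, so $\sigma\in\Stab(z)$. It remains to check $\sigma\cdot x=y$ coordinatewise, where $(\sigma\cdot x)_m=x_{\sigma(m)}$: at $m=j$ we get $x_i=y_j$; at $m=i$ we get $x_j=z_j=z_i=y_i$; and at every other coordinate $x_m=z_m=y_m$. Thus $\sigma\in\Stab(z)$ with $\sigma\cdot x=y$, establishing the CCT property.
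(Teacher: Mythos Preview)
Your proof is correct and follows essentially the same approach as the paper: isolate the unique coordinates $i$ and $j$ where the covers $x\lessdot z$ and $y\lessdot z$ occur, use the multiset equality coming from $y\in S_l\cdot x$ to relate the entries at positions $i$ and $j$, and conclude that the transposition $(i\ j)$ lies in $\Stab(z)$ and sends $x$ to $y$. Your version is in fact a bit more careful than the paper's---you explicitly treat the case $i=j$ and verify $z_i=z_j$ directly---but the underlying argument is the same.
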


\begin{proof}
For $a \in P^\ell$ denote $a = (a_1,\ldots, a_\ell)$. Suppose $x,y,z \in P^\ell$ with $x\lessdot z,y\lessdot z$, and $x \in S_\ell y$, but $x \neq y$. This means there is a unique $i$ such that $x_i \lessdot z_i$ and $x_k = z_k$ for $k \neq i$. Additionally, there is a unique $j$ for which $y_j \lessdot z_j$ and $y_k =z_k$ for $k \neq j$. Since $x \in S_\ell y$, we obtain the equality of multisets $\{x_1,\ldots, x_\ell\}=\{y_1,\ldots,y_\ell\}$. But for $k \neq i,j$ we have $x_k = z_k = y_k$, so we also obtain equality of sets $\{x_i,x_j\} = \{y_i,y_j\}$. Since $y_j \lessdot x_j$, we obtain $y_j = x_i,y_i = x_j$. Then, taking the transposition $\sigma  = (ij) \in S_\ell$, it follows that $\sigma \in \Stab(z)$ and $\sigma \cdot x = y$.
\end{proof}

\begin{prop}
\label{prop:wreath_preservation}
If $\psi\colon G\times P \rightarrow P$ is CCT, let $\phi\colon G\wr S_\ell \times P^\ell \rightarrow P^\ell$ be the induced action defined in Definition \ref{defn:wreath_action}. Then $\phi$ is also CCT.
\end{prop}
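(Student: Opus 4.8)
The plan is to realize $G \wr S_l$ as the semidirect product $G^l \rtimes S_l$ and then apply Proposition \ref{prop:semidirect_product_preservation} with $H = G^l$ and $K = S_l$. Taking $H = G^l$ to be the normal subgroup consisting of the elements $((g_1,\ldots,g_l),e)$ and $K = S_l$ the subgroup of elements $((e,\ldots,e),h)$, the multiplication rule in the wreath product exhibits $G \wr S_l = G^l \rtimes S_l$, and by Definition \ref{defn:wreath_action} the induced action $\phi$ restricts on $H$ to the coordinatewise action of $G^l$ on $P^l$.

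First I would verify the hypothesis on $H$. The action of $G^l$ on $P^l$ is exactly the $l$-fold direct product of the action $\psi$ of $G$ on $P$ with itself. Since $\psi$ is CCT by hypothesis, iterating Proposition \ref{prop:direct_product_preservation} a total of $l-1$ times (formally, by induction on $l$, writing $G^l = G \times G^{l-1}$ acting on $P \times P^{l-1}$) shows that $G^l$ acts CCT on $P^l$.

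Next I would verify the hypothesis on $K$, which requires understanding the quotient $P^l/H$ together with the $K$-action it inherits. The orbits of $G^l$ acting coordinatewise on $P^l$ are products of $G$-orbits, so there is a natural isomorphism of posets $P^l/G^l \cong (P/G)^l$ sending $G^l(a_1,\ldots,a_l) \mapsto (Ga_1,\ldots,Ga_l)$. Under this identification the action of $K = S_l$ on $P^l/H$ induced from $\phi$ becomes the coordinate-permutation action of $S_l$ on $(P/G)^l$ (up to replacing each permutation by its inverse, which does not affect whether the action is CCT, since it leaves the orbits and stabilizers unchanged). By Lemma \ref{lem:symmetric_group_product_action} applied to the poset $P/G$ in place of $P$, this permutation action is CCT.

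Having checked both hypotheses, Proposition \ref{prop:semidirect_product_preservation} immediately yields that $G^l \rtimes S_l = G \wr S_l$ acts CCT on $P^l$, completing the proof. I expect the only genuinely delicate step to be the bookkeeping in the third paragraph: confirming that the induced action on $P^l/G^l$ really is the action of Lemma \ref{lem:symmetric_group_product_action}, and in particular reconciling the $h^{-1}$ appearing in Definition \ref{defn:wreath_action} with the convention $\sigma \cdot (x_1,\ldots,x_l) = (x_{\sigma(1)},\ldots,x_{\sigma(l)})$ used in that lemma. Everything else reduces to a direct appeal to the three cited results.
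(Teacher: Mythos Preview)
Your proposal is correct and follows essentially the same route as the paper's own proof: write $G\wr S_l = G^l \rtimes S_l$, use Proposition~\ref{prop:direct_product_preservation} to get that $G^l$ acts CCT on $P^l$, identify $P^l/G^l \cong (P/G)^l$ so that Lemma~\ref{lem:symmetric_group_product_action} applies to the induced $S_l$-action, and conclude via Proposition~\ref{prop:semidirect_product_preservation}. Your extra care about the $h^{-1}$ versus $\sigma$ indexing convention is a fair point to flag, but as you note it is harmless since replacing each permutation by its inverse is an automorphism of $S_l$ and does not change orbits or stabilizers.
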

\begin{proof}
Note that the wreath product $G \wr S_\ell$ can be viewed as a semidirect product $G^\ell \rtimes S_\ell$. Since the action of $G$ on $P$ is CCT, we obtain that the action of $G^\ell$ on $P^\ell$ is CCT by Proposition ~\ref{prop:direct_product_preservation}.  Furthermore, the action 
\begin{align*}
S_\ell \times (P/G)^\ell &\rightarrow (P/G)^\ell \\
	(\sigma ,(x_1,\ldots, x_\ell)) &\mapsto (x_{\sigma(1)}\ldots, x_{\sigma(\ell)})
\end{align*}
 for $\sigma \in S_\ell$ and $x_i \in P/G$ is CCT by Lemma ~\ref{lem:symmetric_group_product_action}.  Since $P^\ell/G^\ell \cong (P/G)^\ell$, it follows that the action $\phi$ satisfies the conditions of Proposition ~\ref{prop:semidirect_product_preservation}, so $\phi$ is CCT.
\end{proof}


\ssec{Examples of CCT Actions}\label{ssec:CCT_examples}

In this subsection, we describe several classes of CCT actions. First, we show that the automorphism group of any rooted tree is CCT. Second, we show that linear automorphisms of simplices and octahedra are CCT. Third, we show that the left multiplication action is CCT if and only if the group is $\mathbb Z_2^k$, and that any action of $\mathbb Z_2^k$ on $[n]$ induces a CCT action on $B_n$. In the end of this subsection, we prove Proposition \ref{prop:cover_transitive_building_blocks}, which shows that certain symmetric group and dihedral group actions are CCT.

\sssec{An application to rooted trees}
\label{ssec:rooted_trees}
In this subsection, we prove that the automorphism group of a rooted tree is always CCT. To do this we will apply Proposition ~\ref{prop:wreath_preservation} and Proposition ~\ref{prop:direct_product_preservation}, using that the automorphism group of a  rooted tree is essentially built from direct products and wreath products with a symmetric group. To this aim, we first give definitions relating to rooted trees, then characterize their automorphisms, and finally show that such automorphism groups are always CCT.

\begin{defn}
A graded poset $P$ is a {\it rooted tree} if there is a unique element $z \in P$ of maximal rank, called the {\it root}, and for all $x \in P$ other than the root there exists a unique $y \in P$ with $y \gtrdot x$.
\end{defn}

\begin{eg}
We give two examples of rooted trees in Figures \ref{fig:tree1} and \ref{fig:tree2}.
\end{eg}
\begin{figure}[h!]
\[\begin{tikzpicture}[scale=.7]
  \node (0) at (-4,0) {$7$};
  \node (1) at (-3,0) {$8$};
  \node (2) at (-2,0) {$9$};
  \node (3) at (-1,0) {$10$};
    \node (4) at (0,0) {$11$};
  \node (5) at (1,0) {$12$};
  \node (6) at (2,0) {$13$};
  \node (7) at (3,0) {$14$};
  \node (8) at (-3.5,1.2) {$3$};
  \node (9) at (-1.5,1.2) {$4$};
  \node (10) at (.5,1.2) {$5$};
  \node (11) at (2.5,1.2) {$6$};
    \node (12) at (-2.5,2.3) {$1$};
  \node (13) at (1.5,2.3) {$2$};
  \node (14) at (-.5,3.5) {$0$};
  \draw (0)--(8);
  \draw (1)--(8);
  \draw (2)--(9);
  \draw (3)--(9);
  \draw (4)--(10);
  \draw (5)--(10);
  \draw (6)--(11);
  \draw (7)--(11);
  \draw (8)--(12);
  \draw (9)--(12);
  \draw (10)--(13);
  \draw (11)--(13);
  \draw (12)--(14);
  \draw (13)--(14);
\end{tikzpicture}\]
\caption{\label{fig:tree1} An example of a rooted tree with $8$ leaves}
\end{figure}
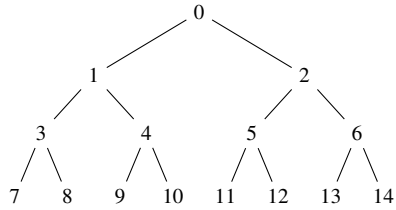

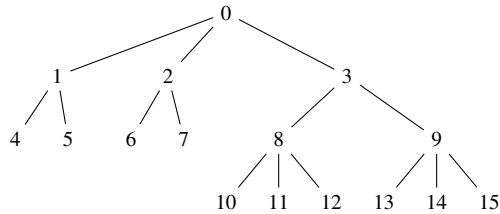
\begin{figure}[h!]
\[\begin{tikzpicture}[scale=.7]
  \node (0) at (-4,0) {$4$};
  \node (1) at (-3,0) {$5$};
  \node (2) at (-1.8,0) {$6$};
  \node (3) at (-0.8,0) {$7$};
    \node (4) at (0,-1.2) {$10$};
  \node (5) at (1,-1.2) {$11$};
  \node (6) at (2,-1.2) {$12$};
  \node (7) at (3,-1.2) {$13$};
  \node (8) at (4,-1.2) {$14$};
  \node (9) at (5,-1.2) {$15$};
  \node (10) at (1, 0) {$8$};
  \node (11) at (4,0) {$9$};
    \node (12) at (-3.2,1.2) {$1$};
  \node (13) at (-1.1,1.2) {$2$};
  \node (14) at (2.3,1.2) {$3$};
  \node (15) at (0,2.4) {$0$};
 \draw (0)--(12);
  \draw (1)--(12);
  \draw (2)--(13);
  \draw (3)--(13);
  \draw (4)--(10);
  \draw (5)--(10);
  \draw (6)--(10);
  \draw (7)--(11);
  \draw (8)--(11);
  \draw (9)--(11);
  \draw (10)--(14);
  \draw (11)--(14);
  \draw (12)--(15);
  \draw (13)--(15);
  \draw (14)--(15);
\end{tikzpicture}\]
\caption{\label{fig:tree2} An example of a rooted tree with 10  leaves}
\end{figure}

\begin{defn}
For $P$ a rooted tree, an element $x \in P$ is a {\it leaf} if there is no $z \in P$ with $x > z$. Denote the set of all leaves of $P$ by $L(P)$.
\end{defn}

We next recall a useful, elementary lemma whose proof we omit.

\begin{lem}
\label{lem:induced_tree_action}
Let $P$ be a rooted tree. Then the action of $\Aut(P)$ on $P$ induces an action of $\Aut(P)$ on $L(P)$. Furthermore, there is also an induced action of $\Aut(P)$ on $B_n$, where $n = |L(P)|$. 
\end{lem}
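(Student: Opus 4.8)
The plan is to reduce both assertions to the single observation that the leaves of $P$ are precisely its minimal elements, a purely order-theoretic condition that is therefore preserved by any automorphism. First I would establish the induced action on $L(P)$, and then deduce the action on $B_n$ directly from Remark \ref{rem:induced_action_bn}.

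For the first part, fix $g \in \Aut(P)$. By the definition of a group action on $P$, we have $x \le_P y \iff gx \le_P gy$ for all $x,y \in P$, so $g$ is a genuine poset automorphism with order-preserving inverse $g^{-1}$. Recall that $x \in P$ is a leaf exactly when there is no $z \in P$ with $z <_P x$, i.e. when $x$ is minimal. If $x$ were a leaf but $gx$ were not, there would be some $z <_P gx$; applying $g^{-1}$ gives $g^{-1} z <_P x$, contradicting the minimality of $x$. Hence $g$ carries leaves to leaves, and applying the same argument to $g^{-1}$ shows that the restriction $g|_{L(P)}$ is a bijection of $L(P)$. Since restriction is compatible with composition and sends $\id_P$ to $\id_{L(P)}$, the assignment $g \mapsto g|_{L(P)}$ is a group homomorphism $\Aut(P) \to \on{Sym}(L(P))$, that is, an action of $\Aut(P)$ on $L(P)$.

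For the second part, choose any bijection $L(P) \cong [n]$ with $n = |L(P)|$, transporting the action of the previous paragraph to an action $\psi\colon \Aut(P) \times [n] \rightarrow [n]$. By Remark \ref{rem:induced_action_bn}, any action on $[n]$ induces an action on $B_n$, and this is the desired induced action of $\Aut(P)$ on $B_n$. (This is exactly the setup needed to later feed $\Aut(P)$-actions into the CCT machinery of Propositions \ref{prop:direct_product_preservation} and \ref{prop:wreath_preservation}.)

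I do not expect a genuine obstacle here; the one point requiring care is that the leaves of a rooted tree need not all lie in a single rank, as illustrated in Figure \ref{fig:tree2}, where some leaves are covered at different heights than others. It is therefore essential to characterize leaves as minimal elements rather than as the elements of the bottom rank. Once this order-theoretic description is used, automorphism-invariance is immediate, and the passage to $B_n$ is a direct appeal to Remark \ref{rem:induced_action_bn}.
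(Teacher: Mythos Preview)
Your proof is correct and follows essentially the same approach as the paper: both argue by contradiction that if $x$ is a leaf but $gx$ is not, then applying $g^{-1}$ to an element below $gx$ contradicts minimality of $x$, and both then pass to $B_n$ via a bijection $L(P)\cong[n]$. Your write-up is slightly more detailed (explicitly noting the homomorphism $\Aut(P)\to\on{Sym}(L(P))$ and citing Remark~\ref{rem:induced_action_bn}), and your closing remark about leaves occurring at different ranks is a nice clarification, but the core argument is the same.
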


For the rest of this section only, fix a rooted tree $P$ and denote by $G$ the group of automorphisms $\Aut(P)$. Let $G$ act on $B_n$, where $n = |L(P)|$, by the induced action $\phi\colon G \times L(P) \rightarrow L(P)$ described in the proof of Lemma ~\ref{lem:induced_tree_action}.
For $x \in P$, denote $D(x) = \{y \in P\colon y \leq x\},$ so $D(x)$ is the maximal subposet of $P$ with maximum element $x$.

\begin{prop}
\label{prop:automorphism_trees}
Let $P$ be a rooted tree with root vertex labeled $0$. Let $\{A_1,\ldots,A_m\}$ denote the set of isomorphism classes of $\{D(x)\colon x\lessdot 0\}$, and let $i_k$ denote the number of subtrees in $\{D(x)\colon x\lessdot 0\}$ in the isomorphism class $A_k$. For $A_k \in \{A_1,\ldots,A_m\}$, denote $G_k = \Aut(A_k)$. Then, 
\begin{equation}
\label{eq:level_expansion}
\Aut(P) \cong (G_1 \wr S_{i_1}) \times (G_2 \wr S_{i_2}) \times \cdots \times (G_m\wr S_{i_m})
\end{equation}
In particular, $\Aut(P)$ can be expressed as a sequence of direct products and wreath products of symmetric groups.
\end{prop}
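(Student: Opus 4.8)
The plan is to show that every automorphism of $P$ is determined by the way it permutes and acts on the subtrees hanging below the root, and that the set of allowable such data is exactly the displayed product of wreath products.

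First I would record two structural facts about an arbitrary $g \in \Aut(P)$. Since the root $0$ is the unique element of maximal rank and $g$ is rank-preserving, $g(0)$ also has maximal rank, forcing $g(0) = 0$. Because $g$ is a poset isomorphism it preserves and reflects the cover relation (if $x \lessdot y$ and $g(x) \le z \le g(y)$ then $x \le g^{-1}(z) \le y$, so $z \in \{g(x),g(y)\}$), and hence $g$ restricts to a permutation of the set $C := \{x : x \lessdot 0\}$ of children of the root. Moreover, for each $x \in C$ we have $g(D(x)) = \{g(y) : y \le x\} = \{z : z \le g(x)\} = D(g(x))$, and $g|_{D(x)}\colon D(x) \to D(g(x))$ is an isomorphism of the corresponding subtrees. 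In particular $g$ can only send a child $x$ to a child $x'$ with $D(x) \cong D(x')$, so $g$ preserves each of the sets $C_k := \{x \in C : D(x) \cong A_k\}$, which has size $i_k$. Here one also uses the elementary fact that every non-root element lies below $0$ and that the elements above any fixed element form a chain, so that $P$ is recovered as $\{0\} \sqcup \bigsqcup_{x \in C} D(x)$ with each root $x$ of $D(x)$ covered by $0$.

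Next I would assemble these restrictions into the claimed isomorphism. Since $P$ is reconstructed from the root together with the disjoint glued subtrees $\{D(x) : x \in C\}$, an automorphism $g$ is determined by the permutation it induces on $C$ together with the family of isomorphisms $g|_{D(x)}$; conversely, any choice of a permutation of $C$ preserving each $C_k$ together with, for each $x$, an isomorphism $D(x) \to D(g(x))$ assembles into a well-defined automorphism of $P$. Thus $\Aut(P) \cong \prod_{k=1}^m \Aut_k$, where $\Aut_k$ is the group of such data supported on $C_k$. Fixing once and for all a reference isomorphism $\iota_j^{(k)}\colon A_k \to D(x_j^{(k)})$ for each of the $i_k$ subtrees in class $k$, every element of $\Aut_k$ is encoded by a permutation $\sigma \in S_{i_k}$ of the copies together with an automorphism $\iota_{\sigma(j)}^{(k),-1}\circ g|_{D(x_j^{(k)})}\circ \iota_j^{(k)} \in \Aut(A_k) = G_k$ for each copy. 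Verifying that composition of automorphisms translates into exactly the multiplication rule $((g_i'),h')\cdot((g_i),h) = ((g'_{h'(1)}g_1,\ldots,g'_{h'(l)}g_l),hh')$ then identifies $\Aut_k$ with $G_k \wr S_{i_k}$, giving \eqref{eq:level_expansion}.

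The main obstacle I expect is the bookkeeping in this last step: one must pin down the reference isomorphisms $\iota_j^{(k)}$ and check that $g \mapsto (\text{tuple of transported automorphisms},\ \text{induced permutation})$ is a group homomorphism whose multiplication matches the wreath-product formula, the twist arising precisely because $S_{i_k}$ permutes the copies before $G_k$ acts within them. Everything else is routine once the root is shown to be fixed and subtrees are shown to map only to isomorphic subtrees. Finally, the ``in particular'' claim follows by induction on the rank of $P$: a single leaf has trivial automorphism group, and each $A_k$ is a rooted tree of strictly smaller rank, so by the inductive hypothesis $G_k = \Aut(A_k)$ is built from symmetric groups using direct and wreath products; substituting into \eqref{eq:level_expansion} expresses $\Aut(P)$ in the same form.
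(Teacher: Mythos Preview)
Your proposal is correct and follows essentially the same approach as the paper: fix the root, observe that automorphisms permute the children within isomorphism classes of their subtrees and restrict to isomorphisms between those subtrees, then identify the resulting data with the product of wreath products and finish the ``in particular'' by induction on rank. Your write-up is in fact more careful than the paper's (you explicitly justify that the root is fixed, that $g(D(x))=D(g(x))$, and that the encoding via reference isomorphisms $\iota_j^{(k)}$ really matches the wreath-product multiplication), whereas the paper only sketches these steps.
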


\begin{proof}
We proceed by induction on the rank of $P$.  It is clear that if $P$ is rank 0, then $\Aut(P)$ is trivial.  If the rank of $P$ is greater than 0, label the vertices of $P$ by $\{0,1,\ldots, s\}$ such that the root is labeled $0$ and the vertices just below the root are labeled $1, \ldots, k$. Let $A_1,\ldots, A_m$ denote the distinct isomorphism classes of trees in the set $\{D(1),\ldots, D(k)\}$. For $A_k \in \{A_1,\ldots,A_m\}$, denote $G_k = \Aut(A_k)$. 
Let $T_j = \{t\colon t\lessdot 0,D(t) \cong A_j\}$. Then, letting $Q_j$ be the subtree of $P$ whose elements lie in the set $\{0\} \cup (\cup_{t \in T_j} D(t))$, we have that $\Aut(Q_j) \cong G_j \wr S_{i_j}$, because after choosing a permutation of the elements of $T_j$, we are free to choose any element of $G_j$ to permute each $D(t),t \in T_j$. If $t_1 \lessdot 0,t_2 \lessdot 0$, and $g \cdot t_1 = t_2$, then it must be that $g \cdot D(t_1) = D(t_2)$. Therefore, $\Aut(P)$ must permute these isomorphism classes of trees, and the full automorphism groups is simply the direct product, 
\begin{equation}
\Aut(P) \cong (G_1 \wr S_{i_1}) \times (G_2 \wr S_{i_2}) \times \cdots \times (G_m\wr S_{i_m}),
\end{equation}

Since each $G_j$ is a sequence of direct products and wreath products with symmetric groups by the inductive assumption, it follows from ~\eqref{eq:level_expansion} that so is $\Aut(P)$.
\end{proof}

\begin{eg}
Let $P_1$ be the rooted tree in Figure \ref{fig:tree1} and $P_2$ be the rooted tree in Figure \ref{fig:tree2}.  Proposition \ref{prop:automorphism_trees} says that $\Aut (P_1) \cong (S_2 \wr S_2)\wr S_2;$ and $\Aut(P_2) \cong (S_2 \wr S_2) \times (S_3 \wr S_2) $. 
\end{eg}

\begin{cor}
\label{cor:tree_cct}
For $P$ a rooted tree, $\Aut(P)$ is CCT.
\end{cor}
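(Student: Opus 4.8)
The plan is to induct on the rank of the rooted tree $P$, feeding the structural decomposition of Proposition \ref{prop:automorphism_trees} into the two closure results, Propositions \ref{prop:direct_product_preservation} and \ref{prop:wreath_preservation}. Throughout, ``the action of $\Aut(P)$'' should be read as the induced action on $B_n$ with $n = |L(P)|$, as fixed in the Notation preceding this corollary.

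For the base case I would take $P$ of rank $0$, which is a single vertex and hence a single leaf; then $\Aut(P)$ is trivial and the trivial action on $B_1$ is CCT by Example \ref{eg:trivial_edgequot}. For the inductive step, assume the claim for all rooted trees of rank strictly less than that of $P$, and invoke Proposition \ref{prop:automorphism_trees} to write
\[
\Aut(P) = (G_1 \wr S_{i_1}) \times \cdots \times (G_m \wr S_{i_m}),
\]
where $G_k = \Aut(A_k)$ and $A_1,\dots,A_m$ are the isomorphism classes of the subtrees $D(x)$ for $x \lessdot 0$. Each $A_k$ has strictly smaller rank than $P$, so by the inductive hypothesis the action of $G_k$ on $B_{n_k}$ is CCT, where $n_k = |L(A_k)|$. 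I would then apply Proposition \ref{prop:wreath_preservation} with $G = G_k$, the base poset $B_{n_k}$, and $l = i_k$ to conclude that $G_k \wr S_{i_k}$ acts CCT on $(B_{n_k})^{i_k}$, and finally iterate Proposition \ref{prop:direct_product_preservation} over $k = 1,\dots,m$ to get that the direct product acts CCT on $\prod_{k=1}^m (B_{n_k})^{i_k}$.

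The remaining, and most delicate, step is to match this abstract product action with the induced action of $\Aut(P)$ on $B_n$ itself. Since the leaves of $P$ partition as $L(P) = \bigsqcup_{k}\bigsqcup_{t} L(D(t))$, ranging over the $i_k$ subtrees in each isomorphism class $A_k$, one has $n = \sum_k i_k n_k$ and a poset isomorphism $B_n \cong \prod_k (B_{n_k})^{i_k}$, built from the elementary fact $B_{a+b}\cong B_a\times B_b$ (the same decomposition underlying Corollary \ref{cor:HBn_unitary_peck}). The work is in checking that under this identification the induced action on leaves is exactly the product-of-wreath-products action of Definition \ref{defn:wreath_action}: the $S_{i_k}$ factor permutes the blocks of leaves coming from the $i_k$ isomorphic subtrees, while each $G_k$ factor acts on the leaves of a single such subtree precisely as in its induced action on $B_{n_k}$.

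I expect this bookkeeping -- confirming that the geometric action on leaves is the product action rather than merely abstractly isomorphic to it -- to be the main obstacle, though it is essentially already recorded in the proof of Proposition \ref{prop:automorphism_trees}. Granting the identification, CCT-ness transports across the poset isomorphism (CCT is a property of the group action on the poset, preserved under isomorphism), and the induction closes.
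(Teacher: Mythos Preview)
Your proposal is correct and follows essentially the same approach as the paper: induct via the structural decomposition of Proposition~\ref{prop:automorphism_trees} and apply the closure of CCT under wreath products (Proposition~\ref{prop:wreath_preservation}) and direct products (Proposition~\ref{prop:direct_product_preservation}). The paper's proof is a terse two-sentence version of exactly this; your write-up is more explicit, and in particular the bookkeeping step you flag---identifying $B_n$ with $\prod_k (B_{n_k})^{i_k}$ and matching the induced leaf action with the product-of-wreath-products action---is a genuine point that the paper leaves implicit.
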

\begin{proof}
Let the rank of $P$ be $n$. By Proposition ~\ref{prop:wreath_preservation}, wreath products with symmetric groups preserve the CCT property, and by Proposition ~\ref{prop:direct_product_preservation} the direct product of two CCT groups is again CCT. Therefore, by the proof of Proposition ~\ref{prop:automorphism_trees}, the group $\Aut(P)$ can be built up by repeating this pair of observations $n-1$ times.
\end{proof}

\sssec{Automorphisms of Polytopes}
\label{sssec:polytopes}

As another class of CCT actions, we describe several linear automorphism groups of polytopes whose induced actions on $B_n$ are CCT. In particular, we prove that the linear automorphism groups of simplices and octahedra are CCT.
Once we prove Proposition ~\ref{prop:cover_transitive_building_blocks} in Subsection \ref{sssec:dihedral}, we will also see that the action of the dihedral group on a regular $n$-gon is CCT for $n = p,2p$.  Since the dihedral group is the group of all linear automorphisms of the regular $n$-gon, this action gives another example of the linear automorphism group of a polytope being CCT.

\begin{defn}
Let $M$ be a polytope with a particular embedding in $\mathbb R^n$. The {\it group of linear automorphisms of M} is the subgroup of $\operatorname{GL}_n$ whose elements are $\{g \in \operatorname{GL}_n\colon g \cdot M = M\}$.
\end{defn}

First we look at linear automorphisms of simplices. Let $G$ be the group of linear automorphisms of the $(n-1)$-simplex whose vertices lie at the standard basis vectors in $\mathbb R^n$. The action of $G$ on the $(n-1)$-simplex induces an action on $[n]$, given by identifying $[n]$ with the $n$ vertices of the $(n-1)$-simplex. Hence, it induces an action on $B_n$.

\begin{eg}
The induced action of the group of linear automorphisms of the $(n-1)$-simplex on $B_n$ is CCT. To see this, observe that the group of linear automorphisms in this case induces the usual action of $S_n$ on $B_n$, because any permutation matrix defines a linear map on $\mathbb R^n$. However, we know the action of $S_n$ on $B_n$ is CCT from Example ~\ref{eg:trivial_edgequot}.
\end{eg}

Next we look at linear automorphisms of octahedra. Let $G$ be the group of linear automorphisms of the n-octahedron whose vertices are located at $\pm e_i$, where $e_1,\ldots e_n$ are the standard basis vectors of $\mathbb R^n$. Then the action of $G$ on the octahedron induces an action of $G$ on the $2n$ vertices of the octahedron, and hence on $B_{2n}$.

\begin{prop}
The induced action of the group of linear automorphisms of the $n$-octahedron on $B_{2n}$ is CCT.
\end{prop}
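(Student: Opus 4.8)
The plan is to recognize the linear automorphism group $G$ of the $n$-octahedron as the hyperoctahedral group of signed permutations, and then to realize its induced action on $B_{2n}$ as a wreath product action to which Proposition \ref{prop:wreath_preservation} applies directly.

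First I would identify $G$ concretely. Any linear automorphism of the octahedron permutes the vertex set $\{\pm e_1,\ldots,\pm e_n\}$, and being linear it must send antipodal pairs $\{e_i,-e_i\}$ to antipodal pairs. Since the $e_i$ form a basis, such a map is determined by sending each $e_i$ to some $\pm e_{\sigma(i)}$ with $\sigma\in S_n$; conversely every such signed permutation matrix lies in $\operatorname{GL}_n$ and preserves the octahedron. Hence $G\cong S_2\wr S_n$, where the base factor $S_2$ acts on the two vertices $\{e_i,-e_i\}$ of a single axis by swapping them and the $S_n$ permutes the $n$ axes.

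Next I would set up the poset identification. Grouping the $2n$ vertices into the $n$ antipodal pairs yields an isomorphism of graded posets $B_{2n}\cong (B_2)^n$, sending a subset $S$ to the tuple $(S\cap\{e_i,-e_i\})_{i=1}^n$; this is rank-preserving and order-preserving because both inclusion and cardinality decompose over the pairs. Under this identification, the induced action of $G$ on $B_{2n}$ is exactly the wreath product action of $S_2\wr S_n$ on $(B_2)^n$ of Definition \ref{defn:wreath_action}, with $S_2$ acting on $B_2$ by swapping the two singletons $\{1\}\leftrightarrow\{2\}$ and $S_n$ permuting the factors. Verifying that these two actions coincide is the one place requiring genuine care; it amounts to comparing how a signed permutation acts on antipodal pairs against the factorwise-then-permute recipe prescribed by Definition \ref{defn:wreath_action}, which I expect to be the main obstacle, albeit a bookkeeping one.

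Finally, the base action of $S_2$ on $B_2$ is CCT by Proposition \ref{prop:cover_transitive_building_blocks}(1), since $S_2$ is precisely the full symmetric group acting on $B_2$. Applying Proposition \ref{prop:wreath_preservation} with $G=S_2$, $P=B_2$, and $l=n$ then shows that the induced action of $S_2\wr S_n$ on $(B_2)^n\cong B_{2n}$ is CCT, which is exactly the assertion. Once the two identifications of the previous paragraph are in place, the conclusion is immediate from the wreath product machinery already developed, so no further computation is needed.
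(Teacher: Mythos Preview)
Your proposal is correct and follows essentially the same approach as the paper: identify the linear automorphism group of the $n$-octahedron as the hyperoctahedral group $S_2\wr S_n$, then invoke Proposition~\ref{prop:wreath_preservation} (with the base action of $S_2$ on $B_2$ being CCT). If anything, you are more explicit than the paper about the identification $B_{2n}\cong (B_2)^n$ and the verification that the induced action matches the wreath product action of Definition~\ref{defn:wreath_action}; the paper simply asserts these.
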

\begin{proof}
It is simple to see that the group of linear automorphisms of the n-octahedron is the hyperoctahedral group, since it is generated by the permutation matrices together with the matrix $A$, where $A_{1,1} = -1,A_{i,i} = 1$, and $A_{j,k} = 0$ for $i \neq 1, j \neq k$.\footnote{The hyperoctahedral group is commonly denoted by $B_n$, since it is the type $B$ Coxeter group. We do not use this notation here to avoid confusing it with the boolean algebra.} It is well known that the hyperoctahedral group can be written as $S_2 \wr S_n$. Then, by Proposition ~\ref{prop:wreath_preservation}, it follows that $S_2 \wr S_n$ is CCT.
\end{proof}

\begin{rmk}
Let us give a brief recap of which linear automorphisms of polytopes are known to induce actions on $B_n$ which are CCT. First, by the above lemmas, the induced action is CCT for octahedra and simplices. By Proposition  \ref{prop:cover_transitive_building_blocks} and Remark \ref{rem:iff_dihedral_cct}, the linear automorphism group of an $n$-gon induces a CCT action on $B_n$ if and only if $n\in \{1,p,2p\}$ for $p$ a prime. Additionally, using python code, we have verified that automorphisms of the 3-cube with vertices at $(\pm 1,\pm 1,\pm 1)$ induces a CCT action. It is still unknown whether the linear automorphism groups of $n$-cubes are CCT for $n> 3$, and also whether the remaining five exceptional regular polytopes (namely the dodecahedron and icosahedron in $\mathbb R^3$, and the $24$-cell, $120$-cell, and $600$-cell polytopes in $\mathbb R^4$) induce CCT actions. These questions are repeated in Question \ref{question:cube_cct} and Question \ref{question:exceptional_polytopes_cct}.

\end{rmk}

\sssec{CCT Actions of $\mathbb Z_2^k$}

In this subsection, we show that any embedding of $\mathbb Z_2^k$ into $S_n$ defines an action on $B_n$ which is CCT.  This implies that every action of $\mathbb Z_2^k$ on $B_n$ is CCT. However, it turns out that this is the only class of groups for which every action on $B_n$ is CCT.

\begin{prop}
\label{prop:regular_action_CCT}
Recall that $G$ is an elementary abelian $2$-group if $G \cong (\mathbb Z/2\mathbb Z)^k$ for some $k \in \mathbb N$.
\begin{enumerate}
  \item For any $n \in \mathbb N$, and $G$ an elementary abelian 2-group, every $G$-action $\phi:G \times B_n \rightarrow B_n$ is CCT.
  \item For every finite group $G$ which is not an elementary abelian 2-group, there exists at least one $G$-action which is not CCT, namely the action of $G$ on $B_n$ induced by the left-regular action of $G$ on itself, where $n = |G|$.
\end{enumerate}
\end{prop}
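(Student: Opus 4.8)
The plan is to route both parts through the single observation that $G$ is an elementary abelian $2$-group if and only if every element of $G$ has order at most $2$: indeed, if $g^2 = e$ for all $g$, then expanding $(gh)^2 = e$ gives $gh = h^{-1}g^{-1} = hg$, so $G$ is automatically abelian. Under this equivalence, part (1) becomes the assertion that involutions cannot obstruct the CCT condition, while part (2) becomes the assertion that a single element of order $\ge 3$ already breaks CCT in the left-regular representation.

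For part (1), I would first invoke Remark~\ref{rem:induced_action_bn} so that the action on $B_n$ is induced from a permutation action on $[n]$, whence each $g \in G$ acts on $[n]$ as an involution (as $g^2 = e$). The crux is the following lemma: \emph{if $\sigma \in S_n$ is an involution, $z \in B_n$, and $x \ne y$ satisfy $x \lessdot z$, $y \lessdot z$, and $\sigma(x) = y$, then $\sigma(z) = z$.} To prove it, write $z = x \cup \{a\}$ with $a \notin x$ and $z = y \cup \{b\}$ with $b \notin y$; since $x \ne y$ and $|x| = |y|$ we have $a \ne b$, forcing $a \in y \setminus x$ and $b \in x \setminus y$. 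As $\sigma$ restricts to a bijection $[n]\setminus x \to [n]\setminus y$ and $a \notin x$, we get $\sigma(a) \in [n]\setminus y$; meanwhile $a \in y = \sigma(x)$ gives $\sigma^{-1}(a) \in x$, and $\sigma^{-1} = \sigma$ yields $\sigma(a) \in x$. Hence $\sigma(a) \in x \setminus y = \{b\}$, so $\sigma(a) = b$ and $\sigma(z) = \sigma(x) \cup \{\sigma(a)\} = y \cup \{b\} = z$. Granting the lemma, the CCT condition is immediate for any triple $(x,y,z)$ with $y = gx$: if $x = y$ take $h = e \in \Stab(z)$, and if $x \ne y$ apply the lemma to $\sigma = g$ to conclude $g \in \Stab(z)$, so $h = g$ works.

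For part (2), I would first reduce to producing an element $g \in G$ of order at least $3$, which exists exactly because $G$ is \emph{not} an elementary abelian $2$-group. Identifying the underlying set of the left-regular action with $G$ itself, so that the elements of $B_n$ ($n = |G|$) are the subsets of $G$, I would exhibit the explicit counterexample $z = \{e, g\}$, $x = \{e\}$, $y = \{g\}$. The CCT hypotheses hold, since $x \lessdot z$, $y \lessdot z$, and $y = g \cdot x$. Because the regular action is free, the only group element sending $x = \{e\}$ to $y = \{g\}$ is $g$ itself (as $h \cdot \{e\} = \{h\}$); yet $g \cdot z = \{g, g^2\} \ne z$, since $\operatorname{ord}(g) \ge 3$ forces $g^2 \ne e$ and $g^2 \ne g$, so $g^2 \notin \{e,g\}$. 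Thus no $h \in \Stab(z)$ satisfies $hx = y$, and the regular action is not CCT.

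The only genuinely delicate point is the involution lemma of part (1): the key (and perhaps counterintuitive) fact is that the \emph{given} element $g$ already lies in $\Stab(z)$, so no separate search for a stabilizing element is ever required. Everything else -- the order-$\le 2$ characterization of elementary abelian $2$-groups and the freeness of the regular action -- is routine, and the failure of CCT in part (2) is witnessed by the smallest possible configuration.
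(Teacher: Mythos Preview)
Your proposal is correct and follows essentially the same approach as the paper: in part~(1) you show that the very element $g$ sending $x$ to $y$ already lies in $\Stab(z)$, and in part~(2) you use the identical counterexample $x=\{e\}$, $y=\{g\}$, $z=\{e,g\}$ for some $g$ with $g^2\neq e$. The only difference is that the paper shortcuts your involution lemma by observing directly that $z = x\cup y$ whenever $x\neq y$ are both covered by $z$, whence $gz = gx\cup gy = y\cup x = z$; your element-chase to $\sigma(a)=b$ reaches the same conclusion by a slightly longer route.
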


\begin{proof}
First, we show $(1)$ holds. Let $x,y,z\in B_n$ such that $x \lessdot z, y \lessdot z$, and  $x = gy$ for some $g\in G$.  Since $x\ne y$ we have $z=x\cup y$.  Furthermore, since every element in $\mathbb Z_2^k$ has order 2 we have that $gy = g^2x = x$ and thus $gz = gx\cup gy = y\cup x = z$. Hence $g\in\Stab(z)$ and thus $\phi$ is CCT.

Next, we show $(2)$ holds. First, let us show $G \cong \mathbb Z_2^k \iff \forall g \in G,g^2 = e$. The forward implication is obvious. To see the converse, first note that if $\forall g \in G$, $g^2 = e$, then $G$ is abelian because $aba^{-1}b^{-1} = abba = a^2 = e$.  Then, $G$ is an abelian group, all of whose elements have order two. The structure theorem of finite abelian groups tells us $G \cong \mathbb Z_2^k$.

Suppose $G \not \cong \mathbb Z_2^k$. Then there exists $g \in G$ such that $g^2 \neq e$. Clearly $\{e\}\lessdot \{e \cup g\},\{g\} \lessdot \{e \cup g\}$, and $\{g\} \in G\{e\}$. So in order to show that the induced action $\phi\colon G \times B_n \rightarrow B_n$ is not CCT, it suffices to show there is no $h \in G$ such that $h \in \Stab(\{e \cup g\})$ and $h\cdot \{e\} =\{g\}$.  If $h \in \Stab(\{e \cup g\})$ and $h \cdot \{e\} = \{g\}$, then $h = g$, and it follows that $g^2 = e$.  Thus there does not exist such an $h$, and the action induced by left multiplication is not CCT.
\end{proof}

\sssec{The proof of Proposition~\ref{prop:cover_transitive_building_blocks}}
\label{sssec:dihedral}\indent 

Let $x, y \in B_n$, and let $G$ act on $B_n$. 
\building*

\begin{proof}
We have already seen in Example \ref{eg:trivial_edgequot} that $(1)$ holds trivially. We prove part $(2)$. The proof of part $(3)$ is similar. 

Let $x$, $y$ be elements of $B_n$ such that $x$ is covered by $y$.  We wish to prove that given $\sigma \in D_{2p}$ such that $\sigma x \lessdot y$, there exists some $\tau \in D_{2p}$ such that $\tau x = \sigma x$ and $\tau y = y$. Of course, we may assume $\sigma x \neq x$, as we could then take $\tau = \id$.

The action of $D_{2p}$ on $B_p$ is induced by the action of $D_{2p}$ on $[p]$, where $[p]$ is identified with vertices of the regular $p$-gon. Note that any element in $D_{2p}$ is either some reflection $r$ by one of the lines of symmetry of the polygon or some rotation $\sigma_0^{d}$, where $\sigma_0$ is the generator $\sigma_0 = (12 \cdots p)$ and $d$ is some integer. Hence we only need to show the claim when $\sigma = r$ or $\sigma = \sigma_0^d$. It is clear that the claim holds for $\sigma = r$: if $x \lessdot y$ and $r \cdot x \lessdot y$, then $r \cdot y = y$, since $r$ is of order 2, and we are assuming $\sigma x \neq x$. Now suppose $\sigma_0^d \cdot x \lessdot y$ for some $(x,y) \in \mathcal E(B_p)_i$. Then $(x, y)$ is of the form $x = \{s, s+d, ..., s+(i-1)\cdot d\}$ for some starting point $s \in [n]$ and $y = \{s, s+d, ..., s+(i-1)d, s + i \cdot d\}$. Now let $r_0$ be the reflection given by $x \mapsto (2s+ i \cdot d)-x$ for all $x \in [n]$, reducing mod $n$ whenever necessary. Then $r_0 x = \sigma_0^d x$ and $r_0$ fixes $y$ by construction.  Therefore the action of $D_{2p}$ on $B_p$ is CCT. 
\end{proof}

\begin{rmk}
\label{rem:iff_dihedral_cct}
We claim that if $n \neq p,n \neq 2p$, and $n >8$ for any prime $p$, then the action of $D_{2n}$ on $B_n$ is not CCT. To see this, we give an example of a non-CCT pair. Assume $n \ne p, 2 p$.  Then $n = mk$ for some $m \ge k \ge 3$. Let us consider elements $x, y, z$, where $z = \{1, m+1, 2m+1, ..., (k-1)m+1, 2, m+2\}$, $x = z \backslash \{m+2\}$, and $y = z \backslash \{2\}$. We immediately have that $x, y \lessdot z$, and $x \in D_{2n} y$ since $x$ is sent to $y$ by the permutation $(12\cdots n)^m \in D_{2n}$. It is also clear from the asymmetry of the element $z$ that there is no $g \in D_{2n}$ translating $x$ to $y$ while fixing $z$. Therefore the action of $D_{2n}$ on $B_n$ as described is CCT if and only if $n =p$ or $n = 2p$ for some prime $p$.

Hence a complete list of $n$ for which $D_{2n}$ is $CCT$ is given by $n = p,n = 2p, n = 1$, and $n = 8$, where $p$ varies over all primes.
\end{rmk}

\begin{rmk}
There are several other results related to $\mathcal E(C_n)$ and $\mathcal E(D_{2n})$, where $C_n = \mathbb Z/n\mathbb Z$, which are proven in Section 7 of the REU report. Notably,
\begin{enumerate}
	\item For $G$ any group of order $n$ acting transitively on $[n]$, the induced action of $G$ on $B_n$ defines a quotient poset with $|(\mathcal E(B_n)/G)_i| = \binom {n-1} i$.
	\item For all $n$, $\mathcal E(B_n/C_n)$ is symmetric and unimodal.
	\item For all $n$, $\mathcal E(B_n/D_{2n})$ is symmetric and unimodal.
\end{enumerate}
\end{rmk}

\section{A Unimodality Result}\label{sec:wreath_product}

Using Corollary ~\ref{cor:tree_cct}, we prove a result related to \cite[Theorem 1.1]{pak} of Pak and Panova. We construct a certain sequence which is not only unimodal, but can even be exhibited as the ranks of a Peck poset. This construction gives an alternate proof of \cite[Theorem 1.1]{pak} in the case that $r = 1$.

For this section, fix $\ell,m$ with $n = \ell \cdot m$, and fix $G = S_m \wr S_\ell$. Let $S_m$ act on $B_m$ by the permutation representation, and then let $G$ act on $B_{m}^\ell\cong B_{m \cdot \ell}$ by the action defined in Definition \ref{defn:wreath_action}.

\ssec{Restatement of the Unimodality Result}
We first review the necessary definitions and then state \cite[Theorem 1.1]{pak}:

A {\it partition} $\lambda$ of $n$, denoted by $\lambda \vdash n$, is a sequence of numbers $\lambda = (\lambda_1,\ldots, \lambda_k)$ such that $\lambda_1 \geq \lambda_2 \geq \cdots \geq \lambda_k$ and $\sum_{i=1}^k \lambda_i = n$. Let $P_n(\ell,m)$ denote the set of partitions $\lambda = (\lambda_1,\ldots, \lambda_k) \vdash n$, such that $\lambda_1 \leq m$ and $k \leq \ell$. That is, $P_n(\ell,m)$ is the set of partitions which fit inside an $\ell \times m$ rectangle.

For $\lambda$ a partition, let $\nu(\lambda)$ be the number of distinct nonzero part sizes of $\lambda$.  Let $p_k(\ell,m,r) = \sum_{\lambda \in P_k(\ell,m)} \binom{\nu(\lambda)}{r}$, as defined in \cite[Section 1]{pak}.

A {\it Young diagram} is a finite collection of boxes arranged
so that their rows are left-justified and their row lengths are weakly
increasing from top to bottom.

\begin{thm}
\label{thm:pak_thm}
\cite[Theorem 1.1]{pak}
The sequence $p_r(\ell,m,r), p_{r+1}(\ell,m,r),\ldots, p_{\ell\cdot m}(\ell,m,r)$ is unimodal and symmetric.
\end{thm}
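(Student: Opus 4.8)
The final statement is the full Pak--Panova theorem, valid for every $r$, whereas the poset constructed in this section resolves only the case $r=1$; the plan is to promote that argument to general $r$. Recall why $r=1$ works: the action of $G=S_m\wr S_l$ on $B_n$ is CCT by Proposition~\ref{prop:wreath_preservation}, so $\mathcal E(B_n/G)$ is Peck by Theorem~\ref{thm:cover_transitive_implies_Peck}, hence rank-symmetric and rank-unimodal. Moreover its rank sizes are exactly $p_k(l,m,1)$: a $G$-orbit of an edge $(x,y)$ with $|y|=k+1$ is recorded by the block-count partition $\mu$ of $y$ together with the fill-level of the block losing an element, and the distinct fill-levels that may be decremented are precisely the $\nu(\mu)$ removable corners of $\mu$; summing over $\mu\in P_{k+1}(l,m)$ gives $p_{k+1}(l,m,1)$. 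Thus the sequence $p_1(l,m,1),\dots,p_{lm}(l,m,1)$ is the rank-size sequence of a Peck poset.

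For general $r$, the quantity $\binom{\nu(\mu)}{r}$ counts $r$-subsets of the removable corners of $\mu$, so $\{p_k(l,m,r)\}$ is the rank-size sequence of the poset whose rank-$k$ elements are pairs $(\mu,A)$ with $\mu\in P_k(l,m)$ and $A$ an $r$-subset of the removable corners of $\mu$; the goal is to show this poset is Peck. A useful reduction is the bijection deleting one part of each selected size (we may assume $r\le l$, as otherwise every $p_k(l,m,r)=0$): it yields
\[
p_k(l,m,r)=\sum_{1\le v_1<\cdots<v_r\le m}\bigl|P_{\,k-(v_1+\cdots+v_r)}(l-r,m)\bigr|,
\]
identifying $\{p_k(l,m,r)\}$ with the rank-size sequence of a disjoint union, over the $\binom{m}{r}$ selections $v_1<\cdots<v_r$, of copies of $Q':=B_{(l-r)m}/(S_m\wr S_{l-r})$ shifted upward by $v_1+\cdots+v_r$. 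Each $Q'$ is Peck by Theorem~\ref{thm:quotients_of_unitary_peck_posets}, but a disjoint union of Peck posets with \emph{different} centers is not Peck, so the shifted copies must be glued by genuine order relations. The plan is to supply this gluing by realizing the whole object as a quotient $\mathcal E^{(r)}(B_n)/G$ of a generalized edge functor $\mathcal E^{(r)}$ (in the spirit of the generalizations of $\mathcal E$ mentioned after Corollary~\ref{cor:quotiented_edge_peck}), to prove $\mathcal E^{(r)}(B_n)$ is unitary Peck by decomposing it into products of boolean algebras as in Proposition~\ref{prop:computing_HBn}, to upgrade the CCT property of $S_m\wr S_l$ to a bijective morphism $\mathcal E^{(r)}(B_n)/G\to \mathcal E^{(r)}(B_n/G)$, and finally to invoke Lemma~\ref{lem:bijection_peck_implication}.

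The main obstacle is twofold. First, one must define the cover relations of $\mathcal E^{(r)}$ so that the quotient rank sizes are \emph{exactly} $\binom{\nu(\mu)}{r}$: the naive relation that adjoins a common element to both coordinates overcounts, since it permits several label boxes to lie in one block or to land at repeated corners, so the definition must enforce $r$ \emph{distinct} removable corners, a condition visible on $B_n/G$ but not intrinsically on $B_n$. Second, one must prove the unitary-Peck property for $r\ge 2$, where the clean decomposition $\mathcal H(B_n)\cong n\cdot B_{n-1}$ of Proposition~\ref{prop:computing_HBn} has to be replaced by a subtler decomposition respecting the distinct-corner constraint. As a fallback less tied to an explicit functor, I would instead use the Lefschetz $\mathfrak{sl}_2$-action on $V(B_n)$ generated by the raising operator ``add any element''; this operator is $G$-equivariant, so the $\mathfrak{sl}_2$-action commutes with $G$ and restricts to $V(B_n)^G=V(B_n/G)$. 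I would then try to identify $\sum_\mu\binom{\nu(\mu)}{r}q^{|\mu|}$ with the graded dimension of an explicit $\mathfrak{sl}_2$-submodule built from $r$-fold selections of removable corners; since the graded dimensions of any finite-dimensional $\mathfrak{sl}_2$-module are automatically symmetric and unimodal, this would yield the theorem. In either route the crux is the same: verifying the hard-Lefschetz isomorphism condition of Lemma~\ref{lem:Peck_poset_characterization} for the $r$-corner refinement.
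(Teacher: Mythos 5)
Your first paragraph is, in substance, the paper's own proof of the only case the paper actually proves. The statement as it appears in the paper is imported from \cite{pak}; within the paper, only the $r=1$ case is established, in Proposition \ref{prop:rank_gen_fn_wreath_1}, and your reconstruction of it is faithful: CCT for $S_m\wr S_l$ (the paper routes this through Corollary \ref{cor:tree_cct}, you through Proposition \ref{prop:wreath_preservation} -- the same mechanism, since the tree result rests on the wreath-product result), then Theorem \ref{thm:cover_transitive_implies_Peck} to get that $\mathcal E(B_n/G)$ is Peck, Young-diagram orbit representatives for $B_n/G$, and the observation that the number of covers below $Gy$ equals the number of removable corners $\nu(\bar y)$, so the rank sizes are $p_{1+i}(l,m,1)$. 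Up to there you are correct and essentially identical to the paper.

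The gap is everything after: for $r\ge 2$ you have a research plan, not a proof, and you say so yourself. Concretely: (i) the generalized functor $\mathcal E^{(r)}$ is never defined -- you correctly note that the naive cover relation overcounts and that the distinct-corner constraint is visible on $B_n/G$ but not intrinsically on $B_n$, which is precisely why it is unclear that any $G$-equivariant poset upstairs induces rank sizes $\binom{\nu(\mu)}{r}$ downstairs; (ii) even granting a definition, the unitary-Peck property (the analogue of Proposition \ref{prop:computing_HBn} together with Corollary \ref{cor:HBn_unitary_peck}) is asserted as a hope, with no candidate decomposition; (iii) the $\mathfrak{sl}_2$ fallback is underdetermined: $V(B_n)^G\cong V(B_n/G)$ has graded dimension $|P_k(l,m)|$ in degree $k$, which is too small to contain the $r$-corner refinement, so the ``submodule built from $r$-fold selections'' must live in some larger module you have not constructed, and no raising operator on it is shown to satisfy the hard-Lefschetz condition of Lemma \ref{lem:Peck_poset_characterization}. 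Your shift identity $p_k(l,m,r)=\sum_{1\le v_1<\cdots<v_r\le m}\bigl|P_{k-(v_1+\cdots+v_r)}(l-r,m)\bigr|$ is correct and a nice observation, but, as you acknowledge, summing symmetric unimodal sequences with different centers yields nothing, so it does not advance the argument. In short, your attempt proves exactly what the paper proves (the $r=1$ case) by the same method, while the general statement remains supplied by \cite{pak}, where it is proved by entirely different means; completing any branch of your sketch would be a genuinely new contribution, not a reconstruction of this paper's proof.
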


\ssec{A Proof of Theorem \ref{thm:pak_thm} for $r = 1$}
Now that we have stated Pak and Panova's Theorem, we give an alternative proof of Theorem \ref{thm:pak_thm} in the case of $r=1$. In fact, we do better, by realizing the $p_i(\ell,m,1)$ as ranks of a Peck poset.

\begin{prop}
\label{prop:rank_gen_fn_wreath_1}
There is an equality $|(\mathcal E(B_n)/G)_i| = |\mathcal E(B_n/G)_i| = p_{1+i}(\ell,m,1)$. In particular, Theorem \ref{thm:pak_thm} holds in the case $r = 1$.
\end{prop}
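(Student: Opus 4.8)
The plan is to route everything through the lattice of partitions fitting in an $l \times m$ box. Write $L(l,m)$ for the poset whose elements are the partitions in $P_\bullet(l,m)$ (Young diagrams inside an $l \times m$ rectangle), ordered by containment of diagrams; by definition its rank-$k$ elements are exactly $P_k(l,m)$. The first equality $|(\mathcal E(B_n)/G)_i| = |\mathcal E(B_n/G)_i|$ is essentially free: since $S_m$ acting on $B_m$ is CCT by Proposition \ref{prop:cover_transitive_building_blocks}(1), Proposition \ref{prop:wreath_preservation} shows that $G = S_m \wr S_l$ acting on $B_m^l \cong B_n$ is CCT, and then the equivalence of conditions $(1)$ and $(4)$ in Lemma \ref{lem:cover_transitive_equivalence} gives the equality of rank sizes for every $i$. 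So it remains only to compute $|\mathcal E(B_n/G)_i|$.

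Next I would identify the quotient poset, establishing a poset isomorphism $B_n/G \cong L(l,m)$. Partition $[n]$ into $l$ consecutive blocks $\beta_1,\ldots,\beta_l$ of size $m$; under the wreath action the factor $S_m$ permutes within each block while $S_l$ permutes the blocks, so the $G$-orbit of a subset $x$ is determined precisely by the multiset of intersection sizes $\{|x\cap \beta_1|,\ldots,|x\cap\beta_l|\}$, which sorted in decreasing order is a partition in the $l\times m$ box. This rule is a rank-preserving bijection $B_n/G \to L(l,m)$, and I would check it respects the order: $Gx \le Gy$ holds iff some translate of $x$ is contained in some translate of $y$, which by the block structure happens iff the block sizes of $x$ are dominated by those of $y$ after a matching, and this is equivalent (after sorting) to containment of the associated diagrams. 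In particular covers map to covers, so $\mathcal E(B_n/G)_i$ corresponds exactly to the set of covering pairs $\lambda \lessdot \mu$ in $L(l,m)$ with $\rk(\lambda)=i$, equivalently $|\mu| = i+1$.

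The final combinatorial step is to count these covers by grouping them according to their top element $\mu$. For fixed $\mu \in P_{i+1}(l,m)$, the partitions $\lambda \lessdot \mu$ are obtained by deleting a removable corner of $\mu$; deleting a corner can only decrease the number of parts and the largest part, so the result automatically stays inside the box, and the count is simply the total number of removable corners of $\mu$. This is the classical fact that a partition has exactly one removable corner per distinct nonzero part size, so the number equals $\nu(\mu)$. Summing over $\mu$ gives
\[
|\mathcal E(B_n/G)_i| \;=\; \sum_{\mu \in P_{i+1}(l,m)} \nu(\mu) \;=\; p_{1+i}(l,m,1),
\]
as claimed. (As a sanity check, for $l=m=2$, $G = S_2\wr S_2$ acting on $B_4$, this yields rank sizes $1,2,2,1$, matching a direct orbit count.)

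For the ``in particular'' clause, I would invoke Peckness rather than proving unimodality by hand: since the $G$-action is CCT, $\mathcal E(B_n/G)$ is Peck by Theorem \ref{thm:cover_transitive_implies_Peck}, hence rank-symmetric and rank-unimodal. Because $\mathcal E(B_n)$ has ranks $0,1,\ldots,n-1 = lm-1$, the sequence of rank sizes of $\mathcal E(B_n/G)$ is exactly $p_1(l,m,1), p_2(l,m,1),\ldots,p_{lm}(l,m,1)$, which therefore is symmetric and unimodal, establishing Theorem \ref{thm:pak_thm} in the case $r=1$. The only places needing genuine care are the order-relation half of the isomorphism $B_n/G \cong L(l,m)$ (the forward direction is routine, but one must verify that containment of translates corresponds precisely to diagram containment) and the clean bookkeeping of removable corners inside the box; both are standard, so I do not expect a serious obstacle, only attention to detail.
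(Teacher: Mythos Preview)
Your proposal is correct and follows essentially the same route as the paper: establish that the $G$-action is CCT, identify $B_n/G$ with the lattice $L(l,m)$ of partitions in an $l\times m$ box, count covers below $\mu$ as removable corners $=\nu(\mu)$, and deduce unimodality from Peckness of $\mathcal E(B_n/G)$. The only cosmetic difference is that the paper obtains CCT by recognizing $S_m\wr S_l$ as the automorphism group of a rank-$2$ rooted tree and invoking Corollary~\ref{cor:tree_cct}, whereas you invoke Proposition~\ref{prop:wreath_preservation} directly; these amount to the same thing, since the tree argument unwinds to exactly that proposition.
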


\begin{proof}

First, observe that $S_m \wr S_\ell$ can be described as the automorphism group of a rooted tree of rank 2 with $\ell$ elements at rank $1$ and $m\cdot \ell$ elements at rank 2, such that each element at rank $1$ is above $m$ elements at rank 2. Then it follows from Corollary \ref{cor:tree_cct} that the action of $G$ on $B_{m \cdot \ell}$ is CCT and hence $\mathcal E(B_n/G)$ is Peck.

Next, note that each equivalence class in $B_n/G$ has a unique representative which is a Young diagram.  Here, we think of $B_n = B_{\ell \cdot m}$ as subsets of the $\ell \cdot m$ boxes in an $\ell \times m$ rectangle. The correspondence is then given by sending an equivalence class to the representative which is left-justified and bottom-justified.
 For a complete proof, see, for example, Lemma 5.11 of the REU report.

Now let $Gx$ and $Gy$ be two $G$ orbits with $\bar x$ the Young diagram corresponding to $x$ and $\bar y$ the Young diagram corresponding to $y$. Suppose $Gx \lessdot Gy$. Then $\bar x$ must be a subdiagram of $\bar y$ with a single box removed. Since $\bar x$ and $\bar y$ are both Young diagrams, the removed box must be one of the corners of $\bar y$. Observe that the number of corners of a partition is precisely the number of distinct part sizes, and so $|\{Gx\colon Gx \lessdot Gy\}| = \nu(\bar y)$. Thus,
\begin{align*}
  |\mathcal E(B_n/G)_i| &= \sum_{(Gx,Gy) \in \mathcal E(B_n/G)_i} 1 
  \\
  &= \sum_{Gy \in (B_n/G)_{i+1}} \left(\sum_{Gx \lessdot Gy}^{} 1 \right) 
  \\
  &= \sum_{Gy \in (B_n/G)_{i+1}}\nu(\bar y) 
  \\
  &=  \sum_{\lambda \in P_{i+1}(\ell,m)} \nu(\lambda).
\end{align*}

Therefore $|(\mathcal E(B_n)/G)_i| = |\mathcal E(B_n/G)_i| = p_{1+i}(\ell,m,1)$. Since $\mathcal E(B_n)/G$ is Peck,
\linebreak
$p_1(\ell,m,1), p_{2}(\ell,m,1),\ldots, p_{\ell\cdot m}(\ell,m,1)$ is unimodal and symmetric, and hence Theorem \ref{thm:pak_thm} holds in the case $r = 1$.
\end{proof}

\section{Final Remarks}

In this section, we discuss several related results and list further questions. 

\begin{defn}
Let $B_n(q)$, the {\it q-analog of the boolean algebra}, be the graded poset whose elements are linear subspaces $V \subset \mathbb F_q^n$ with $V \leq W$ if $V \subset W$.
\end{defn}

A natural extension of Conjecture ~\ref{conj:F_of_BnG_Peck} would be an analogous result for q-analogs. We suspect the method used in Section 8 of the REU report to prove $\mathcal E(B_n)$ is unitary Peck may solve Question ~\ref{question:unitary_peck_q_edge}.

\begin{que}
\label{question:unitary_peck_q_edge}
Is $\mathcal E(B_n(q))$ unitary Peck?
\end{que}

Let $G$ be a group acting on $B_n$.  If the answer to Question ~\ref{question:unitary_peck_q_edge} is affirmative, it immediately follows that $\mathcal E(B_n(q))/G$ is Peck.  Furthermore, if the action of $G$ is CCT, then this implies that $\mathcal E(B_n(q)/G)$ is Peck. Hence we pose the following question.

\begin{que}
For $G$ a group with a CCT action on $B_n(q)$, is $\mathcal E(B_n(q)/G)$ Peck?
\end{que}

More generally, we wonder if the q-analog of Conjecture ~\ref{conj:F_of_BnG_Peck} holds.

\begin{que}
For $G$ a group acting on $B_n(q)$, is $\mathcal E(B_n(q)/G)$ Peck? If not, is $\mathcal E(B_n(q)/G)$ rank-unimodal?
\end{que}

We remark that Stanley was able to answer the rank unimodality question for many cases using hard Lefschetz type theorems, by realizing the ranks of the poset as ranks of cohomology groups coming from algebraic geometry \cite{stanley:combinatorial-applications-of-the-hard-lefschetz-theorem}.
\begin{que}
Can we realize the ranks of $\mathcal E(B_n(q)/G)$, or even the edge poset itself, geometrically? 
\end{que}

We found several additional interesting examples of CCT actions. Once such action is the linear automorphism of the $n$-cube. Using python code we wrote, we found that for $n \leq 3$, the linear automorphisms of the $n$-cube induces a CCT action on $B_{2^n}$. We wonder if this generalizes.

\begin{que}
\label{question:cube_cct}
Does the group of linear automorphisms of an $n$-cube in $\mathbb R^n$ whose vertices lie at $(\pm 1, \ldots, \pm 1)$ induce a CCT action on $B^{2^n}$?
\end{que}

There is also the question of which regular polytopes induce CCT actions. We have shown that the $n$-octahedron (respectively the $n$-simplex) induces a CCT action on $B_{2n}$ (respectively $B_{n+1}$) in Subsection~\ref{sssec:polytopes}. We also checked using python code that the $n$-cube induces a CCT action on $B^{2^n}$ for $n \leq 3$. We wonder whether the induced action is CCT for the five exceptional regular polytopes, which are the only regular polytopes other than simplices, octahedra, and cubes.

\begin{que}
\label{question:exceptional_polytopes_cct}
Do the groups of linear automorphisms of the five exceptional regular polytopes (namely the dodecahedron and icosahedron in $\mathbb R^3$ and the $24$-cell, $120$-cell, and $600$-cell polytopes in $\mathbb R^4$) induce CCT actions?
\end{que}

We found using python code we wrote that the group of invertible linear maps on $\mathbb F_2^3$ acting on the the seven nonzero points of $\mathbb F_2^3$ induces an action on $B_7$ which is CCT. We wonder if this generalizes to other groups of invertible linear maps on finite fields.

\begin{que}
Is the action of $\operatorname{GL}_n(\mathbb F_q)$ on $B_{q^n-1}$ (induced by the action of $\operatorname{GL}_n(\mathbb F_q)$ on $(\mathbb F_q^n)^\times$) CCT? What about the action of $\operatorname{PGL}_n(\mathbb F_q)$ on $B_n(q)?$ If not, what about the action of $\operatorname{PGL}_n(\mathbb F_2)$ on $B_n(2)?$ 
\end{que}

\begin{acknowledgements}
This research was carried out in the 2014 combinatorics REU program at the University of Minnesota, Twin Cities and was supported by RTG grant NSF/DMS-1148634.
We would like to thank our mentor Victor Reiner for his consistent help and guidance throughout the project and our TA Elise DelMas for her helpful feedback on the paper. We would also like to thank Ka Yu Tam for helpful comments.  In addition, we thank the math department of University of Minnesota, Twin Cities, for its hospitality and Gregg Musiker for organizing the program.
\end{acknowledgements}

Conflict of Interest: The authors declare that they have no conflict of interest.

\medskip 

\nocite{*}
\bibliographystyle{plain}
\bibliography{References}

\begin{thebibliography}{1}

\bibitem{babson}
Eric Babson and Dmitry~N. Kozlov.
\newblock Group actions on posets.
\newblock {\em J. Algebra}, 285:439--450, 2005.

\bibitem{greene}
Curtis Greene and Daniel~J. Kleitman.
\newblock Proof techniques in the theory of finite sets.
\newblock In {\em Studies in combinatorics (MAA Stud. Math.)}, volume~17, pages
  22--79. Math. Assoc. America, Washington, D.C., 1978.

\bibitem{pak}
Igor Pak and Greta Panova.
\newblock Unimodality via {K}ronecker products.
\newblock {\em J. Algebraic Combin.}, 40:1103--1120, 2014.

\bibitem{proctor}
Robert~A. Proctor.
\newblock Representations of {$\mathfrak s \mathfrak l(2,\mathbb C)$} on posets
  and the {S}perner property.
\newblock {\em SIAM J. on Algebraic and Discrete Methods}, 3:275--280, 1982.

\bibitem{weyl_groups_stanley}
Richard~P. Stanley.
\newblock Weyl groups, the hard {L}efschetz theorem, and the {S}perner
  property.
\newblock {\em SIAM J. on Algebraic and Discrete Methods}, 1:168--184, 1980.

\bibitem{stanley:combinatorial-applications-of-the-hard-lefschetz-theorem}
Richard~P. Stanley.
\newblock Combinatorial applications of the hard {L}efschetz theorem.
\newblock In {\em Proceedings of the {I}nternational {C}ongress of
  {M}athematicians, {V}ol.\ 1, 2 ({W}arsaw, 1983)}, pages 447--453. PWN,
  Warsaw, 1984.

\bibitem{quotients_stanley}
Richard~P. Stanley.
\newblock Quotients of {P}eck posets.
\newblock {\em Order}, 1:29--34, 1984.

\bibitem{enumerative_comb}
Richard~P. Stanley.
\newblock {\em Enumerative Combinatorics}, volume~1.
\newblock Cambridge University Press, 2011.

\bibitem{stanley_alg_comb}
Richard~P. Stanley.
\newblock {\em Algebraic Combinatorics}.
\newblock Undergraduate Texts in Mathematics. Springer, 2013.

\end{thebibliography}

\end{document}